\theoremstyle{plain} 
\newtheorem{thm}{Theorem}[section]
\titleformat*{\section}{\normalfont\fontfamily{phv}\fontsize{16}{19}\bfseries}
\titleformat*{\subsection}{\normalfont\fontfamily{put}\fontsize{12}{15}\bfseries}
\titleformat*{\subsubsection}{\normalfont\fontfamily{put}\fontsize{14}{17}\selectfont}
\newtheorem{lem}[thm]{Lemma}
\newtheorem{cor}[thm]{Corollary}
\theoremstyle{definition}
\newtheorem{dfn}[thm]{Definition}
\newtheorem{rem}[thm]{Remark}
\def\NP{\mathrm{NP}}
\def\P{\mathrm{P}}
\def\PSPACE{\mathrm{PSPACE}}
\def\coNP{\mathrm{coNP}}
\begin{document}
\title{Proof Complexity of Substructural Logics} 

\author{
Raheleh Jalali \footnote{Supported by the grant 19-05497S of GA \v{C}R, RVO: 67985840, and the grant 639.073.807 of the Netherlands Organisation for Scientific Research. 
}
\vspace{10pt}\\
Utrecht University\\
The Netherlands
\vspace{3pt}\\
\texttt{rahele.jalali@gmail.com}\\
}

\date{\today}
\maketitle

\begin{abstract}  
In this paper, we investigate the proof complexity of a wide range of substructural systems. For any proof system $\mathbf{P}$ at least as strong as Full Lambek calculus, $\mathbf{FL}$, and polynomially simulated by the extended Frege system for some super-intuitionistic logic of infinite branching, we present an exponential lower bound on the proof lengths. More precisely, we will provide a sequence of $\mathbf{P}$-provable formulas $\{A_n\}_{n=1}^{\infty}$ such that the length of the shortest $\mathbf{P}$-proof for $A_n$ is exponential in the length of $A_n$. The lower bound also extends to the number of proof-lines (proof-lengths) in any Frege system (extended Frege system) for a logic between $\mathsf{FL}$ and any super-intuitionistic logic of infinite branching. We will also prove a similar result for the proof systems and logics extending Visser's basic propositional calculus $\mathbf{BPC}$ and its logic $\mathsf{BPC}$, respectively. Finally, in the classical substructural setting, we will establish an exponential lower bound on the number of proof-lines in any proof system polynomially simulated by the cut-free version of $\mathbf{CFL_{ew}}$. \\
\textbf{Keywords:} proof complexity, substructural logics, sub-intuitionistic logics. \textsc{MSC}: 03F20, 03B47, 03B20
\end{abstract}

\section{Introduction}
Propositional proof complexity, as a new independent field, was established predominantly to address the fundamental unsolved problems in computational complexity. Starting steps in this systematic study were taken by Cook and Reckhow. In their seminal paper \cite{Cook}, they defined a propositional proof system, PPS, as a polynomial-time computable function whose range is the set of all classical propositional tautologies. Then, they defined a polynomially bounded proof system as a PPS having a short proof for any tautology, i.e., a proof whose length is polynomially bounded by the length of the tautology itself. They proved that the existence of a polynomially-bounded proof system for the classical logic is equivalent to $\NP=\coNP$. Accordingly, if for any PPS there are super-polynomial lower bounds on the lengths of proofs, as a result $\NP$ will be different from $\coNP$ and consequently, $\P$ will be different from $\NP$. Since these are considered to be major open problems in computational complexity, providing super-polynomial lower bounds for all PPS's gained momentum in the field of proof complexity of classical proof systems. Thus far, exponential lower bounds on proof lengths have been established in many different propositional proof systems, including resolution \cite{Haken}, cutting planes \cite{Cutting2}, and bounded-depth Frege systems \cite{Frege2}. For more on the lengths of proofs, see \cite{KrajicekProof}.\\

Aside from the extensive study of some well-known classical proof systems, recently there have been some investigations into the complexity of proofs in non-classical logics on account of their various applications, their power in expressibility and their essential role in computer science. Therefore, it is important to fully understand the inherent complexity of proofs in non-classical logics, considering specially the impact that lower bounds on lengths of proofs will have on the performance of the proof search algorithms. Moreover, from the computational complexity perspective, the study of complexity of proofs in non-classical logics is associated with another major computational complexity problem, namely the $\NP$ vs. $\PSPACE$ problem. Various results have been acheived in this area, for instance exponential lower bounds for the intuitionistic and modal logics \cite{Hrubes}, and for modal and intuitionistic Frege and extended Frege systems \cite{Jerabek}. 
A comprehensive overview of results concerning proof complexity of non-classical logics can be found in \cite{Olaf}.\\

In the realm of non-classical logics, substructural ones are logics originally defined by the systems where some or all of the usual structural rules are absent. These logics include relevant logics, linear logic, fuzzy logics, and many-valued logics. However, the field is more ambitious than any limited investigation of possible effects of the structural rules. The purpose of the study of substructural logics is to uniformly investigate the non-classical logics that originated from different motivations. Complexity-theoretically, several substructural logics are $\PSPACE$-complete, for instance the multiplicative-additive fragment of linear logic, $\mathbf{MALL}$ \cite{MALL}, and full Lambek calculus, $\mathbf{FL}$ \cite{FL}. Check also the $\PSPACE$-hardness for a wide range of substructural logics and $\PSPACE$-completeness for a class of extensions of $\mathbf{FL}$ in \cite{Terui}. Some complexity results about the decision problem of some fragments of Visser's basic propositional logic, $\mathsf{BPC}$, and formal propositional logic, $\mathsf{FPL}$ are also studied in \cite{Rybakov}.\\

In this paper, we will study the proof complexity of proof systems for substructural logics and super-basic logic, and hence a wide-range class of proof systems. More precisely, we will start with an arbitrary proof system $\mathbf{P}$ at least as strong as $\mathbf{FL}$ (or $\mathbf{BPC}$) and polynomially simulated by an extended Frege system for some super-intuitionistic logic $\mathsf{L}$ of infinite branching, denoted by $\mathsf{L}-\mathbf{EF}$. For such a $\mathbf{P}$, we will provide a sequence of hard $\mathbf{P}$-tautologies, namely a sequence of $\mathbf{P}$-provable formulas $\{A_n\}_{n=1}^{\infty}$ with length polynomial in $n$ such that their shortest $\mathbf{P}$-proofs are exponentially long in $n$. Our method is using a sequence of intuitionistic tautologies for which we know there exists an exponential lower bound on the length of their proofs in any $\mathsf{L}-\mathbf{EF}$, where $\mathsf{L}$ has infinite branching. Since these formulas are not necessarily provable in $\mathbf{P}$, the essential step is their modification so that they become provable in $\mathbf{FL}$ (or $\mathbf{BPC}$) and hence in $\mathbf{P}$, while they remain hard for $\mathsf{L}-\mathbf{EF}$. Finally, since $\mathsf{L}-\mathbf{EF}$ is shown to be polynomially as strong as $\mathbf{P}$, the length of any $\mathbf{P}$-proofs of the $\mathbf{P}$-tautologies must be exponential in $n$.
Furthermore, using the same $\mathbf{FL}$-tautologies, one can infer an exponential lower bound also for proof systems polynomially simulated by $\mathbf{CFL_{ew}^-}$, where the superscript ``$-$" means the sequent calculus does not have the cut rule.\\

The outline of the paper is as follows. In Section \ref{SectionPreliminaries}, we start with extensive preliminaries and recall substructural and super-basic logics. Section \ref{SectionFregeExtendedFrege} discusses proof systems and Frege and extended Frege systems for substructural and super-basic logics. Section \ref{SectionDescent} presents a method to convert tautologies in classical logic to tautologies in substructural logics. We then recall the form of hard intuitionistic tautologies in subsection \ref{SubsectionDigression}. Theorem \ref{HardTautologiesFLe} introduces tautologies in substructural logics, whose form is similar to the form of hard intuitionistic tautologies. A translation of these formulas will also be provable in basic logic. Theorem \ref{MainTheorem} and Theorem \ref{ThmFLBPC-EF} in section \ref{SectionLowerBound} state our main results about the existence of an exponential lower bound on the lengths of proofs in a wide range of proof systems for substructural and super-basic logics, including Frege and extended Frege systems for a broad range of substructural and super-basic logics. Moreover, Corollary \ref{CorKolli} is the main concrete application of the paper. Section \ref{SectionSequentCalculus} deals with Gentzen-style sequent calculi for substructural logics and basic logic and provides an exponential lower bound on the number of proof-lines in these systems. Moreover, it discusses how to provide an exponential lower bound on the number of proof-lines in cut-free Gentzen-style sequent calculi for the classical counterparts of some basic substructural logics.

\section{Preliminaries} \label{SectionPreliminaries}
In this section we provide some background and also some new notions needed in the future sections. Throughout the paper we mainly work with substructural logics and we follow \cite{Ono} as the canonical source for the study of the theory of such logics. Nevertheless, to make the paper as self-contained as possible, we include all necessary background information.

\subsection{Substructural logics}
Consider the propositional language $\{\wedge, \vee, *, \top, \bot, 1, 0, /, \setminus, \to \}$. The logical connective $*$ is called fusion and the connectives $\setminus$ and $/$ are called left division and right division, respectively. Throughout the paper, small Roman letters, $p, q, \ldots$ are reserved for propositional variables, small Greek letters $\phi$, $\psi$, $\ldots$, and capital Roman letters $A$, $B$, $\ldots$ denote formulas, the letters $\Gamma , \Sigma, \Pi, \Delta, \Lambda, \Phi$ denote (possibly empty) finite sequences of formulas, separated by commas, and the letters $\Upsilon, \Xi, \Omega$ are reserved for (possibly empty) finite multisets of formulas, unless specified otherwise. All of these letters are used possibly with sub- and/or superscripts. Throughout the paper, we will use the convention that for a logic or a proof system $S$, $\vdash_{S} \phi$ and $S \vdash \phi$ are used interchangeably. If $S$ is a sequent calculus, we sometimes write $S \vdash \Gamma \Rightarrow \Delta$ for $\vdash_{S} \Gamma \Rightarrow \Delta$.\\

Consider the following set of rules that we will use to define basic substructural logics. They are expressed in terms of sequents of the form $\Gamma \Rightarrow \Delta$, where $\Gamma$ is called the antecedent of the sequent and $\Delta$ its succedent. Sequents above the line in each rule are called premises and the sequent below the line is called the conclusion. All the rules are presented in the form of schemes. Therefore, an instance of a rule is obtained by substituting formulas for lower case letters (also called formula variables) and finite (possibly empty) sequences of formulas for upper case letters.

\begin{flushleft}
 \textbf{Initial sequents:}
\end{flushleft}
\begin{center}
$\phi\Rightarrow \phi\; $ \;\;\;\;\; $\Gamma \Rightarrow \Delta, \top, \Lambda $ \;\;\;\;\; $ \Gamma, \bot, \Sigma \Rightarrow \Delta$ \;\;\;\;\;
$\Rightarrow 1$ \;\;\;\;\;\;\;\;\; $0 \Rightarrow$
\end{center}

\begin{flushleft}
 \textbf{Structural rules:}
\end{flushleft}

\begin{itemize}
\item[]
\begin{flushleft}
{Weakening rules:}
\end{flushleft}

\begin{center}
 \begin{tabular}{c c } 
 \AxiomC{$\Gamma, \Sigma \Rightarrow \Delta$}
 \RightLabel{$(L w)$} 
 \UnaryInfC{$\Gamma, \phi, \Sigma \Rightarrow \Delta$}
 \DisplayProof
 &
 \AxiomC{$\Gamma \Rightarrow \Delta, \Lambda$}
 \RightLabel{$(R w)$} 
 \UnaryInfC{$\Gamma \Rightarrow \Delta, \phi, \Lambda $}
 \DisplayProof
\end{tabular}
\end{center}

\begin{flushleft}
{Contraction rules:}
\end{flushleft}

\begin{center}
 \begin{tabular}{c c } 
 \AxiomC{$\Gamma, \phi, \phi, \Sigma \Rightarrow \Delta$}
 \RightLabel{$(L c)$} 
 \UnaryInfC{$\Gamma, \phi, \Sigma \Rightarrow \Delta$}
 \DisplayProof
 &
 \AxiomC{$\Gamma \Rightarrow \Delta, \phi, \phi, \Lambda$}
 \RightLabel{$(R c)$} 
 \UnaryInfC{$\Gamma \Rightarrow \Delta, \phi, \Lambda $}
 \DisplayProof
\end{tabular}
\end{center}

\begin{flushleft}
{Exchange rules:}
\end{flushleft}

\begin{center}
 \begin{tabular}{c c} 
 \AxiomC{$\Gamma, \phi, \psi, \Sigma \Rightarrow \Delta$}
  \RightLabel{$(L e)$} 
 \UnaryInfC{$ \Gamma, \psi, \phi, \Sigma \Rightarrow \Delta$}
 \DisplayProof
 &
  \AxiomC{$\Gamma \Rightarrow \Delta, \phi, \psi, \Lambda$}
  \RightLabel{$(R e)$} 
 \UnaryInfC{$ \Gamma \Rightarrow \Delta, \psi, \phi, \Lambda$}
 \DisplayProof
\end{tabular}
\end{center}
\end{itemize}

\begin{flushleft}
 \textbf{The cut rule:}
\end{flushleft}

\begin{center}
 \begin{tabular}{c}
 \AxiomC{$\Gamma \Rightarrow \phi, \Lambda$}
 \AxiomC{$\Sigma, \phi, \Pi \Rightarrow \Delta$} 
   \RightLabel{$(cut)$}
 \BinaryInfC{$ \Sigma, \Gamma, \Pi \Rightarrow \Delta, \Lambda$}
 \DisplayProof
\end{tabular}
\end{center}

\begin{flushleft}
 \textbf{The logical rules:}
\end{flushleft}

\begin{center}
 \begin{tabular}{c c } 
 \AxiomC{$\Gamma, \Sigma \Rightarrow \Delta$}
 \RightLabel{$(1 w)$} 
 \UnaryInfC{$\Gamma, 1, \Sigma \Rightarrow \Delta$}
 \DisplayProof
 &
 \AxiomC{$\Gamma \Rightarrow \Delta, \Lambda$}
 \RightLabel{$(0 w)$} 
 \UnaryInfC{$\Gamma \Rightarrow \Delta, 0, \Lambda $}
 \DisplayProof
\end{tabular}
\end{center}

\begin{center}
 \begin{tabular}{c c} 
 \AxiomC{$\Gamma, \phi, \Sigma \Rightarrow \Delta$}
 \RightLabel{$(L \wedge_1)$} 
 \UnaryInfC{$\Gamma, \phi \wedge \psi, \Sigma \Rightarrow \Delta$}
 \DisplayProof
 &
  \AxiomC{$\Gamma, \psi, \Sigma \Rightarrow \Delta$}
 \RightLabel{$(L \wedge_2)$} 
 \UnaryInfC{$\Gamma, \phi \wedge \psi, \Sigma \Rightarrow \Delta$}
 \DisplayProof
\end{tabular}
\end{center}

\begin{center}
 \begin{tabular}{c} 
 \AxiomC{$\Gamma \Rightarrow \Delta, \phi, \Lambda$}
 \AxiomC{$\Gamma \Rightarrow \Delta, \psi, \Lambda$}
 \RightLabel{$(R \wedge)$} 
 \BinaryInfC{$\Gamma \Rightarrow \Delta, \phi \wedge \psi, \Lambda$}
 \DisplayProof
\end{tabular}
\end{center}

\begin{center}
 \begin{tabular}{c}
 \AxiomC{$\Gamma, \phi, \Sigma \Rightarrow \Delta$}
 \AxiomC{$\Gamma, \psi, \Sigma \Rightarrow \Delta$}
 \RightLabel{$(L \vee)$} 
 \BinaryInfC{$\Gamma, \phi \vee \psi, \Sigma \Rightarrow \Delta$}
 \DisplayProof
\end{tabular}
\end{center}

\begin{center}
 \begin{tabular}{c c}
\AxiomC{$\Gamma \Rightarrow \Delta, \phi, \Lambda$}
 \RightLabel{$(R \vee_1)$} 
 \UnaryInfC{$\Gamma \Rightarrow \Delta, \phi \vee \psi, \Lambda$}
 \DisplayProof
 &
 \AxiomC{$\Gamma \Rightarrow \Delta, \psi, \Lambda$}
 \RightLabel{$(R \vee_2)$} 
 \UnaryInfC{$\Gamma \Rightarrow \Delta, \phi \vee \psi, \Lambda$}
 \DisplayProof
\end{tabular}
\end{center}

\begin{center}
 \begin{tabular}{c c } 
 \AxiomC{$\Gamma, \phi, \psi, \Sigma \Rightarrow \Delta$}
 \RightLabel{$(L*)$} 
 \UnaryInfC{$\Gamma, \phi * \psi, \Sigma \Rightarrow \Delta$}
 \DisplayProof
 &
 \AxiomC{$\Gamma \Rightarrow \Delta, \phi, \Lambda$}
 \AxiomC{$\Sigma \Rightarrow \Delta, \psi, \Lambda$}
 \RightLabel{$(R*)$} 
 \BinaryInfC{$\Gamma, \Sigma \Rightarrow \Delta, \phi * \psi, \Lambda$}
 \DisplayProof
\end{tabular}
\end{center}

\begin{flushleft}
 \textbf{The non-commutative implications rules:}
\end{flushleft}

\begin{center}
 \begin{tabular}{c c } 
 \AxiomC{$\Gamma \Rightarrow \phi$}
 \AxiomC{$\Pi, \psi, \Sigma \Rightarrow \delta$}
 \RightLabel{$(L/)$} 
 \BinaryInfC{$\Pi, \psi / \phi , \Gamma, \Sigma \Rightarrow \delta$}
 \DisplayProof
 &
 \AxiomC{$\Gamma, \phi \Rightarrow \psi$}
 \RightLabel{$(R/)$} 
 \UnaryInfC{$\Gamma \Rightarrow \psi / \phi$}
 \DisplayProof
\end{tabular}
\end{center}

\begin{center}
 \begin{tabular}{c c } 
 \AxiomC{$\Gamma \Rightarrow \phi$}
 \AxiomC{$\Pi, \psi, \Sigma \Rightarrow \delta$}
 \RightLabel{$(L \setminus)$} 
 \BinaryInfC{$\Pi, \Gamma, \phi \setminus \psi, \Sigma \Rightarrow \delta$}
 \DisplayProof
 &
 \AxiomC{$\phi, \Gamma \Rightarrow \psi$}
 \RightLabel{$(R \setminus)$} 
 \UnaryInfC{$\Gamma \Rightarrow \phi \setminus \psi$}
 \DisplayProof
\end{tabular}
\end{center}

\begin{flushleft}
 \textbf{The commutative implication rules:}
\end{flushleft}

\begin{center}
 \begin{tabular}{c c}
 \AxiomC{$\Gamma \Rightarrow \phi, \Lambda$}
 \AxiomC{$\Pi, \psi, \Sigma \Rightarrow \Delta$}
 \RightLabel{$(L \to)$} 
 \BinaryInfC{$\Pi, \Gamma, \phi \to \psi, \Sigma \Rightarrow \Delta, \Lambda$}
 \DisplayProof
 &
 \AxiomC{$\phi, \Gamma \Rightarrow \psi, \Delta$}
 \RightLabel{$(R \to)$} 
 \UnaryInfC{$\Gamma \Rightarrow \phi \to \psi, \Delta$}
 \DisplayProof
\end{tabular}
\end{center}

Using these rules, we define two families of sequent-style systems in the following; single-conclusion and multi-conclusion.\\

\textit{Single-conclusion}. By a \emph{single-conclusion sequent} we mean a sequent whose succedent has at most one formula. Otherwise, we call it \emph{multi-conclusion}. By a single-conclusion version of any of the aforementioned rules, we mean one of its instances, where both the premisses and the conclusion sequents are single-conclusion. Therefore, $\Delta$ and $\Lambda$, as schematic variables, must be replaced by the empty sequence or a single formula so that all the sequents remain single-conclusion. For instance, in the rule $(Rw)$ or in the initial sequent for $\top$, both $\Delta$ and $\Lambda$ must be empty. Notice that the rules $(Rc)$ and $(Re)$ do not have a single-conclusion instance. We will use the convention that $*$ binds more tightly than $\setminus$ and $/$.
The \emph{interpretation} of any single-conclusion sequent $\Gamma \Rightarrow \phi$ is defined as $I(\Gamma \Rightarrow \phi)= \bigast \Gamma \setminus \phi$ and for the sequent $(\Gamma \Rightarrow)$ as $I(\Gamma \Rightarrow )=\bigast \Gamma \setminus 0$, where by $\bigast \Gamma$ for $\Gamma=\gamma_1, \ldots, \gamma_n$, we mean $\gamma_1 * \ldots * \gamma_n$, and for $\Gamma= \emptyset$, we mean $\bigast \Gamma = 1$. \\
Set $\mathcal{L^*}= \{\wedge, \vee, *, \setminus, /, 1, 0\}$. Let $(e)$, $(c)$, $(i)$, $(o)$, and $(w)=(i + o)$ stand for exchange, contraction, left-weakening, right-weakening and weakening, respectively. 
For any $S \subseteq \{e, i, o, c\}$, define the sequent-style system $\mathbf{FL_S}$ over the language $\mathcal{L^*}$ as the system consisting of the single-conclusion version of the previous rules except for: the commutative implication rules, the structural rules out of the set $S$, and the initial sequents for $\bot$ and $\top$. Define $\mathbf{FL}_{\bot}$ over the language $\mathcal{L^*} \cup \{\bot\}$ as $\mathbf{FL}$ with the single-conclusion version of the initial sequent for $\bot$. Figure \ref{figure}, adapted from \cite{Ono}, shows the relationship between these sequent calculi. Moreover, define the system \textit{weak Lambek}, denoted by $\mathbf{WL}$, over the language $\{1, \bot, \wedge, \vee, *, \setminus\}$, similar to $\mathbf{FL_{\bot}}$, excluding the following rules: $(L/), (R/),$ and $(L \setminus)$.
Some other useful calculi are introduced in Table \ref{table:pekh}. To define these systems, for a sequent calculus $\mathbf{S}$ and a set of sequents $\mathcal{I}$, let the notation $\mathbf{S}+ \mathcal{I}$ be the sequent calculus obtained from adding the elements of $\mathcal{I}$ as initial sequents to $\mathbf{S}$. 
By the notation $\phi \Leftrightarrow \psi$ we mean both $\phi \Rightarrow \psi$ and $\psi \Rightarrow \phi$. The formula $\phi^n$ is defined inductively. $\phi^1$ is $\phi$ and by $\phi^{n+1}$, we mean $\phi * \phi^n$.

\begin{table}[ht]
\caption{Some sequent calculi with their definitions.} 
\centering 
\vline
\begin{tabular}{c c} 
\hline\hline 
Sequent calculus & Definition \\ 
\hline 
$\mathbf{RL}$ & $\mathbf{FL}+ (0 \Leftrightarrow 1)$ \\ 
\hline
$\mathbf{CyFL}$ & $\mathbf{FL}+ (\phi \setminus 0 \Leftrightarrow 0 / \phi)$ \\
\hline
$\mathbf{DFL}$ & $\mathbf{FL}+ (\phi \wedge (\psi \vee \theta) \Leftrightarrow (\phi \wedge \psi) \vee (\phi \wedge \theta))$  \\
\hline
$\mathbf{P_nFL}$ &  $\mathbf{FL}+ (\phi^n \Leftrightarrow \phi^{n+1})$ \\
\hline
$\mathbf{psBL}$ & $\mathbf{FL_w}+ \{(\phi \wedge \psi \Leftrightarrow \phi * (\phi \setminus \psi)), (\phi \wedge \psi \Leftrightarrow (\psi / \phi) * \phi) \}$\\
\hline
$\mathbf{HA}$ & $\mathbf{FL_w}+ (\phi \Leftrightarrow \phi^{2})$\\
\hline
$\mathbf{DRL}$ & $\mathbf{RL} + (\phi \wedge (\psi \vee \theta) \Leftrightarrow (\phi \wedge \psi) \vee (\phi \wedge \theta))$\\
\hline
$\mathbf{IRL}$ & $\mathbf{RL} + (\phi \Rightarrow 1)$\\
\hline
$\mathbf{CRL}$ & $\mathbf{RL} + (\phi * \psi \Leftrightarrow \psi * \phi)$\\
\hline
$\mathbf{GBH}$ & $\mathbf{RL} + \{(\phi \wedge \psi \Leftrightarrow \phi * (\phi \setminus \psi)), (\phi \wedge \psi \Leftrightarrow (\psi / \phi) * \phi) \}$\\
\hline
$\mathbf{Br}$ & $\mathbf{RL} + (\phi \wedge \psi \Leftrightarrow \phi * \psi)$\\
\hline 
\end{tabular}\vline
\label{table:pekh} 
\end{table}

\textit{Multi-conclusion}.
In the absence of the exchange rules, there are several possible ways to define the multi-conclusion rules for fusion and implications and the systems are in some respects more difficult than the commutative case. 
In this paper, we only consider the commutative case and hence we will use the language $\{\wedge, \vee, *, \to, 0, 1\}$, assuming only one implication. The interpretation of any sequent $\Gamma \Rightarrow \Delta$ is defined as $I(\Gamma \Rightarrow \Delta)=\bigast \Gamma \to \neg (\bigast \neg \Delta)$, where $\neg \phi$ is an abbreviation for $\phi \to 0$ and for $\Delta= \delta_1, \ldots, \delta_m$ by $\neg \Delta$, we mean the sequence $\neg \delta_1, \ldots, \neg \delta_m$.\\
Let $S \subseteq \{e, i, o, c\}$ such that $e \in S$. By $\mathbf{CFL_{S}}$, we mean the system consisting of the multi-conclusion version of the previous rules except for: the structural rules out of the set $S$, the non-commutative implication rules, and the initial sequents for $\bot$ and $\top$. By $\mathbf{CFL_S}^-$, we mean $\mathbf{CFL_S}$ without the cut rule.\\
For a sequent calculus $\mathbf{S}$, proofs and provability of formulas are defined in the usual way, and by its logic, $\mathsf{S}$, we mean the set of provable formulas in it, i.e., all formulas $\phi$ such that $(\Rightarrow \phi)$ is provable in $\mathbf{S}$. The systems $\mathbf{FL_S}$ for $S \subseteq \{e, i, o, c\}$ are called basic substructural sequent calculi and their associated logics, basic substructural logics.

\begin{rem} \label{RemImplication}
Note that if $e \in S$, it is easy to show that in the system $\mathbf{FL_S}$, the formulas $\psi / \phi$ and $\phi \setminus \psi$ are provably equivalent. Hence, we can denote them by the common notation $\phi \to \psi$, using the usual connective $\to$. Moreover, it is also possible to axiomatize the system $\mathbf{FL_S}$ over the language $\mathcal{L}^*-\{/, \setminus\} \cup \{\to\}$, using all the rules in $\mathbf{FL_S}$, replacing the non-commutative implication rules with the commutative ones. Similarly, in the sequent calculus $\mathbf{FL_{ecw}}$, the formulas $\phi * \psi$ and $\phi \wedge \psi$ become equivalent and $0$ and $1$ also play the roles of $\bot$ and $\top$, respectively. Hence, it is possible to axiomatize $\mathbf{FL_{ecw}}$ over the language $\mathcal{L}= \{\wedge, \vee, \to, \top, \bot\}$, using all the initial sequents (except the ones for $0$ and $1$) and rules for the corresponding connectives. This is nothing but the usual sequent calculus $\mathbf{LJ}$, for the intuitionistic logic, $\mathsf{IPC}$. A similar type of argument also applies to $\mathbf{CFL_{ecw}}=\mathbf{LK}$, where $\mathbf{LK}$ is the sequent calculus for the classical logic, $\mathsf{CPC}$. Finally, it is worth mentioning that the logic $\mathsf{CFL_e}$ is essentially equivalent to the multiplicative additive linear logic, $\mathsf{MALL}$, introduced by Girard \cite{Girard} and the logic $\mathsf{FL_e}$ is known as its intuitionistic version, called $\mathsf{IMALL}$. $\mathsf{CFL_{ew}}$ is sometimes called the monoidal logic and $\mathsf{CFL_{ec}}$ is essentially equivalent to the relevant logic $\mathsf{R}$ without the distributive law. For more details, see \cite{Ono}.
\end{rem}

\begin{figure} \label{figure}
\centering
\begin{tikzpicture}
  [scale=.8,auto=right]
  \node (n1) at (11,5) {$\mathbf{FL}$};
  \node (n2) at (14,7)  {$\mathbf{FL_c}$};
  \node (n3) at (12,7)  {$\mathbf{FL_e}$};
  \node (n4) at (10,7)  {$\mathbf{FL_o}$};
  \node (n5) at (8,7)  {$\mathbf{FL_i}$};
  \node (n6) at (7,9)  {$\mathbf{FL_w}$};
  \node (n7) at (9,9)  {$\mathbf{FL_{ei}}$};
  \node (n8) at (11,9)  {$\mathbf{FL_{eo}}$};
  \node (n9) at (13,9)  {$\mathbf{FL_{co}}$};
  \node (n10) at (15,9)  {$\mathbf{FL_{ec}}$};
  \node (n11) at (8,11)  {$\mathbf{FL_{ew}}$};
  \node (n12) at (11,11)  {$\mathbf{FL_{ci}}= \mathbf{FL_{eci}}$};
  \node (n13) at (14,11)  {$\mathbf{FL_{eco}}$};
  \node (n14) at (11,13)  {$\mathbf{FL_{cw}}= \mathbf{FL_{ecw}}= \mathbf{LJ}$};
  \foreach \from/\to in {n1/n2,n1/n3,n1/n4,n1/n5,n4/n6,n5/n6,n5/n7,n3/n7,n3/n8,n4/n8,n4/n9,n2/n9,n3/n10,n2/n10,n6/n11,n7/n11,n8/n11,n7/n12,n10/n12,n8/n13,n9/n13,n10/n13,n11/n14,n12/n14,n13/n14}
    \draw (\from) -- (\to);

\end{tikzpicture}
\caption{Basic substructural calculi}
\end{figure}
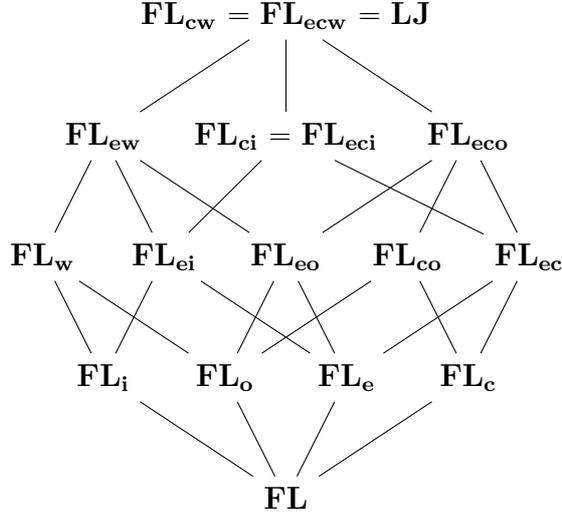

The fact that the sequent calculi $\mathbf{FL_S}$ and $\mathbf{CFL_{S'}}$ enjoy cut elimination for any $S, S' \subseteq \{e, i, o, c\}$ such that $e \in S'$,  has been shown independently by several authors. For instance, see \cite{Girard}, \cite{Lambeck}, and \cite{Ono90}.\\

So far, we have defined some basic substructural logics with their sequent calculi. Later in Definition \ref{DfnSubstructuralLogic}, we will introduce a substructural logic in its most general sense. Since the corresponding relation $\vdash_{\mathsf{L}}$, for a substructural logic $\mathsf{L}$ will be defined as an extension of $\vdash_{\mathsf{FL}}$, we will first define $\vdash_{\mathsf{FL}}$.

\begin{dfn} \label{DfnDeducibilityFL} 
We say a formula $\phi$ is \emph{provable} from a set of formulas $\Upsilon$ in the logic $\mathsf{FL}$ and we denote it by $\Upsilon \vdash_{\mathsf{FL}} \phi$, when the sequent $\Rightarrow \phi$ is provable in the sequent calculus $\mathbf{FL}$ by adding all $\Rightarrow \gamma$ as initial sequents for $\gamma \in \Upsilon$, i.e., $\{\Rightarrow \gamma\}_{\gamma \in \Upsilon} \vdash_{\mathbf{FL}} \Rightarrow \phi$. When $\Upsilon$ is the empty set we sometimes write $\mathsf{FL} \vdash \phi$ for $\vdash_{\mathsf{FL}} \phi$.
\end{dfn}

If the sequent $\phi_1, \ldots, \phi_n \Rightarrow \psi$ is provable in the sequent calculus $\mathbf{FL}$, then we have $\{\phi_1, \ldots, \phi_n \} \vdash_{\mathsf{FL}} \psi$. However, the converse, which is the deduction theorem, does not hold. In fact, unlike the classical and intuitionistic logics, most other substructural logics, including $\mathsf{FL}$, do not have a deduction theorem. We will see in Theorem \ref{ThmParametrizedDeduction} that only a restricted version of the deduction theorem (called the parametrized local deduction theorem) holds for $\vdash_{\mathsf{FL}}$. However, note that by definition, for a formula $\phi$ we have $\vdash_{\mathsf{FL}} \phi$ if and only if $\Rightarrow \phi$ is provable in the sequent calculus $\mathbf{FL}$. From now on, when no confusion occurs, we will write the fusion $\phi * \psi$ as $\phi \psi$.

\begin{dfn} \label{DfnSubstructuralLogic}
A set $\mathsf{L}$ of $\mathcal{L}^*$-formulas is called a \emph{substructural logic} (over $\mathsf{FL}$) if it is closed under substitution and satisfies the following conditions:
\begin{itemize}
\item[$(i)$]
$\mathsf{L}$ includes all formulas in $\mathsf{FL}$,
\item[$(ii)$]
if $\phi, \psi \in \mathsf{L}$, then $\phi \wedge \psi \in \mathsf{L}$,
\item[$(iii)$]
if $\phi, \phi \setminus \psi \in \mathsf{L}$, then $\psi \in \mathsf{L}$,
\item[$(iv)$]
if $\phi \in \mathsf{L}$ and $\psi$ is an arbitrary formula, then $\psi \setminus \phi \psi, \psi \phi / \psi \in \mathsf{L}$.
\end{itemize}
For a set of formulas $\Upsilon \cup \{\phi\}$, define $\Upsilon \vdash_{\mathsf{L}} \phi$ as $\Upsilon \cup \mathsf{L} \vdash_{\mathsf{FL}} \phi$. It is easy to see that $\vdash_{\mathsf{L}} \phi$ is equivalent to $\phi \in \mathsf{L}$, for any substructural logic $\mathsf{L}$.
\end{dfn}

When $\mathsf{L}$ is the logic $\mathsf{FL}$, then $\vdash_{\mathsf{FL}}$ defined in Definition \ref{DfnSubstructuralLogic} will be the same as the one defined in Definition \ref{DfnDeducibilityFL}. Therefore, there will be no ambiguity. As a corollary of Theorem \cite[2.16]{Ono}, it is shown that Definition \ref{DfnSubstructuralLogic} can be replaced by the following: a substructural logic over $\mathsf{FL}$ is a set of formulas closed under both substitution and $\vdash_{\mathsf{FL}}$.\\
It is easy to see that for any subset $S$ of $\{e, i, o, c\}$, the logic $\mathsf{FL_S}$ is a substructural logic  as in Definition \ref{DfnSubstructuralLogic}. Moreover, since for all the sequent calculi in Table \ref{table:pekh}, the sequent calculus $\mathbf{FL}$ and specially the cut rule is present, all their corresponding logics are closed under the conditions in Definition \ref{DfnSubstructuralLogic}. Therefore, they are all substructural logics.\\
Finally, we can see that for (possibly empty) sequences of formulas $\Sigma= \sigma_1, \ldots, \sigma_m$ and $\Gamma$ 
\[
\Rightarrow \sigma_1, \ldots, \Rightarrow \sigma_m \vdash_{\mathbf{FL_S}} \Gamma \Rightarrow \phi \;\; \textit{implies} \;\; \Sigma, \Gamma \vdash_{\mathsf{FL_S}} \phi.
\]
This can be easily shown since we can simulate each rule in $\{e, i, o, c\}$, using its corresponding axiom (defined below) and the cut rule:
\[
(e): (\phi  \psi) \setminus (\psi  \phi) \;\;, \;\; (c): \phi \setminus (\phi  \phi) \;\;, \;\; (i): \phi \setminus 1 \;\;, \;\; (o): 0 \setminus \phi
\]

\begin{dfn} \label{DfnConjugate}
Let $\phi$ and $\alpha$ be formulas. Define
\[
\lambda_{\alpha}(\phi)= (\alpha \setminus (\phi \alpha)) \wedge 1 \;\;\;\; \textit{and} \;\;\;\;
\rho_{\alpha}(\phi) = ((\alpha \phi) / \alpha) \wedge 1.
\]
We call $\lambda_{\alpha}(\phi)$ and $\rho_{\alpha}(\phi)$ the \emph{left and right conjugate of $\phi$ with respect to $\alpha$}, respectively. An \emph{iterated conjugate} of $\phi$ is a composition $\gamma_{\alpha_1} (\gamma_{\alpha_2}(\ldots \gamma_{\alpha_n}(\phi)))$, for formulas $\alpha_1, \ldots, \alpha_n$ where $n \geq 0$ and $\gamma_{\alpha_i} \in \{\lambda_{\alpha_i}, \rho_{\alpha_i} \}$. 
\end{dfn}

It is shown in \cite[Lemma 2.13.]{Ono} that if a sequent $\Gamma, \alpha, \beta, \Sigma \Rightarrow \phi$ is provable in $\mathbf{FL}$, then the following sequents are also provable in $\mathbf{FL}$:
\[
\Gamma , \beta , \lambda_{\beta}(\alpha), \Sigma \Rightarrow \phi \;\;\;\; \textit{and} \;\;\;\; \Gamma , \rho_{\alpha}(\beta), \alpha, \Sigma \Rightarrow \phi.
\]
The following theorem states the parametrized local deduction theorem for $\mathsf{FL}$.

\begin{thm} \cite[Theorem 2.14.]{Ono} \label{ThmParametrizedDeduction}
Let $\mathsf{L}$ be a substructural logic and $\Upsilon \cup \Xi \cup \{\phi\}$ be a set of formulas. Then,
\[
\Upsilon, \Xi \vdash_{\mathsf{L}} \phi \;\;\;\; \textit{iff} \;\;\;\; \Upsilon \vdash_{\mathsf{L}} (\bigast_{i=1}^{n} \gamma_{i} (\psi_i)) \setminus \phi
\]
for some n, where each $\gamma_{i} (\psi_i)$ is an iterated conjugate of a formula $\psi_i \in \Xi$.
\end{thm}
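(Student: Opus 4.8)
The plan is to prove the two directions separately, the backward direction being routine and the forward direction carrying all the content; throughout I unfold $\Upsilon\vdash_{\mathsf{L}}\chi$ as $\{\Rightarrow\gamma:\gamma\in\Upsilon\cup\mathsf{L}\}\vdash_{\mathbf{FL}}\Rightarrow\chi$.

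For the easy direction ($\Leftarrow$) I would first record three closure properties of $\vdash_{\mathsf{L}}$ that follow from the rules of $\mathbf{FL}$ together with conditions $(ii)$--$(iv)$ of Definition \ref{DfnSubstructuralLogic}: (a) a modus ponens rule, namely that $\Theta\vdash_{\mathsf{L}}\chi$ and $\Theta\vdash_{\mathsf{L}}\chi\setminus\phi$ give $\Theta\vdash_{\mathsf{L}}\phi$ (derive $\chi,\chi\setminus\phi\Rightarrow\phi$ by $(L\setminus)$ from axioms and then cut twice); (b) closure under fusion, since from $\Rightarrow A$ and $\Rightarrow B$ one gets $\Rightarrow A*B$ by $(R*)$; and (c) that every iterated conjugate (Definition \ref{DfnConjugate}) of a $\vdash_{\mathsf{L}}$-consequence is again one, because $\Rightarrow\alpha\setminus(A\alpha)$ is derivable from $\Rightarrow A$ using $(R*),(R\setminus)$ and cut, while $\Rightarrow 1$ is an axiom, so $(R\wedge)$ yields $\Rightarrow\lambda_{\alpha}(A)$ (and symmetrically $\Rightarrow\rho_{\alpha}(A)$). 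Granting these, from $\psi_i\in\Xi$ I get $\Upsilon,\Xi\vdash_{\mathsf{L}}\psi_i$, hence $\Upsilon,\Xi\vdash_{\mathsf{L}}\gamma_i(\psi_i)$ by (c), hence $\Upsilon,\Xi\vdash_{\mathsf{L}}\bigast_i\gamma_i(\psi_i)$ by (b); combining this with the hypothesis (read monotonically) and one application of (a) closes the direction.

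For the hard direction ($\Rightarrow$), the hypothesis says $\Rightarrow\phi$ has an $\mathbf{FL}$-derivation $D$ from the initial sequents $\{\Rightarrow\gamma:\gamma\in\Upsilon\cup\Xi\cup\mathsf{L}\}$. I would prove by induction on $D$ the strengthened claim: for every sequent $\Gamma\Rightarrow\theta$ of $D$ there are iterated conjugates $c_1,\ldots,c_n$ of formulas of $\Xi$ such that $c_1,\ldots,c_n,\Gamma\Rightarrow\theta$ is $\mathbf{FL}$-derivable from $\{\Rightarrow\gamma:\gamma\in\Upsilon\cup\mathsf{L}\}$ alone, i.e. with the $\Xi$-axioms deleted and their cost paid by conjugates pushed to the front of the antecedent. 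The base case of an axiom $\Rightarrow\xi$ with $\xi\in\Xi$ is met by $n=1$, $c_1=\xi$ (the trivial conjugate) and the axiom $\xi\Rightarrow\xi$; the other initial sequents need no conjugate. The ``linear'' rules $(1w),(0w),(L\wedge_i),(L*),(R\vee_i),(R/)$ leave the front prefix intact and go through verbatim; the antecedent-permuting or antecedent-merging rules $(R*)$, $(cut)$, $(L\setminus)$, $(L/)$, and $(R\setminus)$ (where the active formula must first be brought to the front) are handled by transporting the individual conjugates through the antecedent with the conjugation lemma \cite[Lemma 2.13]{Ono}: moving a conjugate $c$ one step left past a formula $\beta$ replaces it by $\rho_{\beta}(c)$, still an iterated conjugate of the same $\Xi$-formula, so the invariant survives. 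Applying the claim to the root $\Rightarrow\phi$, then fusing the prefix by $(L*)$ and abstracting by $(R\setminus)$, gives $\Rightarrow(\bigast_i c_i)\setminus\phi$ from $\Upsilon\cup\mathsf{L}$, which is exactly $\Upsilon\vdash_{\mathsf{L}}(\bigast_i c_i)\setminus\phi$ (the empty product being read as $1$).

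The main obstacle is the branching rules with a shared context, $(R\wedge)$ and $(L\vee)$, whose two premises reach the inductive step with different prefixes $\vec c$ and $\vec d$; since $\mathbf{FL}$ has no weakening there is a priori no way to reconcile them. This is exactly where the factor $\wedge 1$ in the definition of $\lambda_{\alpha}$ and $\rho_{\alpha}$ is used: every conjugate, having the form $(\cdots)\wedge 1$, satisfies $(\cdots)\wedge 1\Rightarrow 1$ by $(L\wedge_2)$, and a fusion of such formulas still proves $1$ (via $(1w)$ and cut); this controlled weakening lets me insert the missing prefix into each premise by a $(1w)$ step followed by a cut on $1$, equalize both premises to the common prefix $\vec c,\vec d$, and only then fire $(R\wedge)$ or $(L\vee)$. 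A secondary point I would make explicit is that \cite[Lemma 2.13]{Ono}, though stated for plain $\mathbf{FL}$, supplies a uniform derivation of the permuted sequent from the given one and hence relativizes to the extension of $\mathbf{FL}$ by the initial sequents $\Upsilon\cup\mathsf{L}$, which is what legitimizes its use at the interior nodes of $D$.
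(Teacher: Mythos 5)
The paper itself does not prove this theorem; it imports it verbatim from \cite[Theorem 2.14]{Ono}. So your proposal can only be judged against the standard argument, which it essentially reconstructs: the backward direction via closure of $\vdash_{\mathsf{L}}$ under cut, fusion and conjugation; the forward direction by induction on the $\mathbf{FL}$-derivation from $\{\Rightarrow\gamma:\gamma\in\Upsilon\cup\Xi\cup\mathsf{L}\}$, maintaining a prefix of iterated conjugates, transporting them through antecedents with \cite[Lemma 2.13]{Ono} at the merging rules, and exploiting the $\wedge 1$ factor to equalize the premises of $(R\wedge)$ and $(L\vee)$. The architecture is right, and your remark that Lemma 2.13 relativizes to the extended axiom system (because it produces a uniform derivation of the permuted sequent \emph{from} the given one) is a genuine point that needed to be made.

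There is, however, one concrete inconsistency that breaks the induction as written. In the base case you discharge a $\Xi$-axiom $\Rightarrow\xi$ with the \emph{trivial} conjugate $c_1=\xi$ (zero conjugations), but your treatment of the branching rules rests on the assertion that every conjugate in a prefix has the form $(\cdots)\wedge 1$ and therefore proves $1$. A trivial conjugate has no such form, and in $\mathbf{FL}$, which has no weakening, $\xi\Rightarrow 1$ is in general underivable (take $\xi$ a propositional variable). So whenever a $\Xi$-axiom sits above one premise of an $(R\wedge)$ or $(L\vee)$ whose other premise carries a different prefix, your equalization step --- insert the missing formulas by $(1w)$ and a cut on $1$ --- fails exactly where it is needed. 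The repair is short and uses only your own toolkit: in the base case take instead the one-fold conjugate $c_1=\lambda_1(\xi)=(1\setminus(\xi*1))\wedge 1$; the sequent $\lambda_1(\xi)\Rightarrow\xi$ is derivable in $\mathbf{FL}$ (from $\xi\Rightarrow\xi$ get $\xi,1\Rightarrow\xi$ by $(1w)$ and $\xi*1\Rightarrow\xi$ by $(L*)$; then $(L\setminus)$ with the axiom $\Rightarrow 1$ gives $1\setminus(\xi*1)\Rightarrow\xi$, and $(L\wedge_1)$ finishes). Since the transport steps only ever add conjugations ($c\mapsto\rho_\beta(c)$ or $c\mapsto\lambda_\beta(c)$), with this choice every conjugate occurring in any prefix throughout the induction has at least one conjugation, hence the form $(\cdots)\wedge 1$, and the rest of your argument goes through unchanged.
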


\begin{rem}
Note that the definition of $\vdash_{\mathsf{L}}$ in Definition \ref{DfnSubstructuralLogic} depends on the sequent calculus $\mathbf{FL}$ and not the mere logic $\mathsf{FL}$. Because it uses $\vdash_{\mathsf{FL}}$, defined in Definition \ref{DfnDeducibilityFL}, which itself uses the sequent calculus $\mathbf{FL}$. It is possible to use Theorem \ref{ThmParametrizedDeduction} to provide the following proof system-independent definition of $\vdash_{\mathsf{L}}$:
\[
\Upsilon \vdash_{\mathsf{L}} \phi \;\;\;\;\; \textit{iff} \;\;\;\;\; (\bigast_{i=1}^{n} \gamma_{i} (A_i)) \setminus \phi \in \mathsf{L} \;\;\;\;\; \textit{iff} \;\;\;\;\; (\bigast_{i=1}^{m} \gamma_{i} (B_i)) \setminus \phi \in \mathsf{FL}
\]
for some $n$ and $m$  where each $\gamma_{i} (A_i)$ is an iterated conjugate of some $A_i \in \Upsilon$ and $\gamma_{i} (B_i)$ an iterated conjugate of some $B_i \in \Upsilon \cup \{\mathsf{L}\}$.
\end{rem}

\subsection{Super-basic logics}
In \cite{Visser}, Visser introduced basic propositional logic, $\mathsf{BPC}$, and formal propositional logic, $\mathsf{FPL}$, to interpret implication as formal provability. In \cite{Wim}, Ruitenberg reintroduced $\mathsf{BPC}$ from a more philosophical standpoint and extended the system to its predicate version, $\mathsf{BQC}$. In the following, we present the logic $\mathsf{BPC}$ using its sequent calculus, $\mathbf{BPC}$, introduced in \cite{Basic}. The main difference between this logic and the intuitionistic logic is that the modus ponens rule is weakened here and hence $\mathsf{BPC}$ is weaker than the intuitionistic logic, $\mathsf{IPC}$.\\

The language of $\mathbf{BPC}$ is $\mathcal{L}=\{\wedge, \vee, \top, \bot, \to\}$ and negation is defined as the abbreviation for $\neg \phi= \phi \to \bot$. Recall that $\Upsilon$ and $\Xi$ are the reserved notations for multisets of formulas. By $\Upsilon, \phi$ or $\phi, \Upsilon$, we mean the multiset $\Upsilon \cup \{\phi\}$. Sequents of $\mathbf{BPC}$ are of the same form as the sequents of $\mathbf{LK}$, except that they are written for multisets of formulas, rather than sequences of formulas, and they are interpreted in the same way, i.e., $I(\Upsilon \Rightarrow \Xi)=\bigwedge \Upsilon \to \bigvee \Xi$ where $\bigwedge \emptyset = \top$ and $\bigvee \emptyset = \bot$. The initial sequents and rules of $\mathbf{BPC}$ are as follows:
\begin{center}
$\Upsilon, \phi\Rightarrow \phi, \Xi  $ \;\;\;\;\;  $\Upsilon \Rightarrow \top, \Xi $ \;\;\;\;\; $\Upsilon, \bot \Rightarrow \Xi$
\end{center}

\begin{center}
 \begin{tabular}{c c} 
 \AxiomC{$\phi, \psi, \Upsilon \Rightarrow \Xi$}
 \RightLabel{$(L \wedge)$} 
 \UnaryInfC{$\phi \wedge \psi, \Upsilon \Rightarrow \Xi$}
 \DisplayProof
 &
  \AxiomC{$\Upsilon \Rightarrow \Xi, \phi$}
 \AxiomC{$\Upsilon \Rightarrow \Xi, \psi$}
 \RightLabel{$(R \wedge)$} 
 \BinaryInfC{$\Upsilon \Rightarrow \Xi, \phi \wedge \psi$}
 \DisplayProof
\end{tabular}
\end{center}

\begin{center}
 \begin{tabular}{c c}
 \AxiomC{$\phi, \Upsilon \Rightarrow \Xi$}
 \AxiomC{$\psi , \Upsilon \Rightarrow \Xi$}
 \RightLabel{$(L \vee)$} 
 \BinaryInfC{$ \phi \vee \psi, \Upsilon \Rightarrow \Xi$}
 \DisplayProof
 &
 \AxiomC{$\Upsilon \Rightarrow \Xi, \phi, \psi$}
 \RightLabel{$(R \vee)$} 
 \UnaryInfC{$\Upsilon \Rightarrow \Xi, \phi \vee \psi$}
 \DisplayProof
\end{tabular}
\end{center}

\begin{center}
 \begin{tabular}{c}
 \AxiomC{$\phi, \Upsilon \Rightarrow \psi$}
 \RightLabel{$(R \to)$} 
 \UnaryInfC{$\Upsilon \Rightarrow \Xi, \phi \to \psi$}
 \DisplayProof
\end{tabular}
\end{center}

\begin{center}
 \begin{tabular}{c c}
  \AxiomC{$\phi \wedge \psi , \Upsilon \Rightarrow \Xi$}
   \AxiomC{$\phi \wedge \theta , \Upsilon \Rightarrow \Xi$}
 \RightLabel{$(D)$} 
 \BinaryInfC{$\phi \wedge (\psi \vee \theta), \Upsilon \Rightarrow \Xi$}
 \DisplayProof
 &
  \AxiomC{$\Upsilon \Rightarrow \phi \to \psi$}
   \AxiomC{$\Upsilon \Rightarrow \psi \to \theta$}
 \RightLabel{$(Tr)$} 
 \BinaryInfC{$\Upsilon \Rightarrow \Xi, \phi \to \theta$}
 \DisplayProof
\end{tabular}
\end{center}

\begin{center}
 \begin{tabular}{c c}
  \AxiomC{$\Upsilon \Rightarrow \phi \to \psi$}
   \AxiomC{$\Upsilon \Rightarrow \phi \to \theta$}
 \RightLabel{$(F \wedge)$} 
 \BinaryInfC{$\Upsilon \Rightarrow \Xi, \phi \to (\psi \wedge \theta)$}
 \DisplayProof
 &
  \AxiomC{$\Upsilon \Rightarrow \phi \to \theta$}
   \AxiomC{$\Upsilon \Rightarrow \psi \to \theta$}
 \RightLabel{$(F \vee)$} 
 \BinaryInfC{$\Upsilon \Rightarrow \Xi, (\phi \vee \psi) \to \theta$}
 \DisplayProof
\end{tabular}
\end{center}

\begin{center}
 \begin{tabular}{c}
  \AxiomC{$\Upsilon_1 \Rightarrow \phi , \Xi_1$}
   \AxiomC{$\Upsilon_2 , \phi \Rightarrow \Xi_2$}
 \RightLabel{$(cut)$} 
 \BinaryInfC{$\Upsilon_1, \Upsilon_2 \Rightarrow \Xi_1, \Xi_2$}
 \DisplayProof
\end{tabular}
\end{center}

Note that in this proof system we are using multisets of formulas, and hence the exchange rules are built in. Moreover, the left and right weakening and contraction rules are also admissible and the system enjoys the cut elimination (see \cite{Basic} Lemma 2.2, Lemma 2.12, Lemma 2.14, and Theorem 2.17, respectively).\\
An extension of $\mathbf{BPC}$ augmented by the initial sequent $\top \to \bot \Rightarrow \bot$ is introduced in \cite{EBPC} and is denoted by $\mathbf{EBPC}$. We denote the corresponding logics of $\mathbf{BPC}$ and $\mathbf{EBPC}$ by $\mathsf{BPC}$ and $\mathsf{EBPC}$, respectively. It is shown that $\mathsf{BPC} \subsetneqq \mathsf{EBPC} \subsetneqq \mathsf{IPC}$ \cite{EBPC}. We will not use the semantical characterizations of the logics $\mathsf{BPC}$ or $\mathsf{EBPC}$. However, for the curious reader, it is worth mentioning that
$\mathsf{BPC}$ is sound and complete with respect to transitive persistent Kripke models, while the logic $\mathsf{EBPC}$ is sound and complete with respect to rooted finite transitive persistent Kripke models with reflexive terminal nodes \cite{EBPC}. Moreover, $\mathsf{BPC}$ is connected with the modal logic $\mathbf{K4}$ via G\"{o}del's translation $T$, as shown in \cite{Visser}. 

\begin{dfn}
We say a formula $\phi$ is \emph{provable} from a set of formulas $\Upsilon$ in the logic $\mathsf{BPC}$ and we denote it by $\Upsilon \vdash_{\mathsf{BPC}} \phi$, when the sequent $\Rightarrow \phi$ is provable in the sequent calculus $\mathbf{BPC}$ augmented by $\Rightarrow \gamma$ for all $\gamma \in \Upsilon$ as initial sequents. 
\end{dfn}

The following remark is an important property of the logic $\mathsf{BPC}$ concerning the modus ponens rule.

\begin{rem} \label{RemarkBPC}
Note that although the modus ponens rule in the form
\begin{center}
 \begin{tabular}{c}
  \AxiomC{$\Upsilon \Rightarrow \phi$}
   \AxiomC{$\Upsilon \Rightarrow \phi \to \psi$} 
 \BinaryInfC{$\Upsilon \Rightarrow \psi$}
 \DisplayProof
\end{tabular}
\end{center}
is neither present nor admissible in the sequent calculus $\mathbf{BPC}$, a simplified version of it, where $\Upsilon$ is the empty set, namely
\begin{center}
 \begin{tabular}{c}
  \AxiomC{$ \Rightarrow \phi$}
   \AxiomC{$ \Rightarrow \phi \to \psi$} 
 \BinaryInfC{$ \Rightarrow \psi$}
 \DisplayProof
\end{tabular}
\end{center}
is admissible (but not provable) in $\mathbf{BPC}$. Therefore, the logic $\mathsf{BPC}$ admits the modus ponens rule, i.e., if $\phi \in \mathsf{BPC}$ and $\phi \to \psi \in \mathsf{BPC}$, then $\psi \in \mathsf{BPC}$. The reason is that if $\phi \to \psi \in \mathsf{BPC}$ then $\mathbf{BPC} \vdash (\Rightarrow \phi \to \psi)$. By cut elimination, there exists a cut-free proof of $(\Rightarrow \phi \to \psi)$ in  $\mathbf{BPC}$.
Then by induction on the structure of this cut-free proof we can show that $\mathbf{BPC} \vdash \phi \Rightarrow \psi$. Finally, since $\phi \in \mathsf{BPC}$, we have $\mathbf{BPC} \vdash (\Rightarrow \phi)$, and then using the cut rule we get $\mathbf{BPC} \vdash (\Rightarrow \psi)$, which means $\psi \in \mathsf{BPC}$. However, we have $\Rightarrow \phi, \Rightarrow \phi \to\psi \nvdash_{\mathbf{BPC}} \Rightarrow \psi$, which means that modus ponens is not provable in $\mathbf{BPC}$. The same property also holds for the logic $\mathsf{EBPC}$ \cite[Proposition 3.11]{MWim}.
\end{rem}

In the following, we will define a super-basic logic using its consequence relation. First, let us recall the definition of a consequence relation.

\begin{dfn} \label{DfnConsequenceRelation}
A \emph{consequence relation} $\vdash_{\mathsf{L}}$ is a relation between sets of formulas and formulas in the language of the logic $\mathsf{L}$, such that the following conditions hold:
\begin{itemize}
\item
it is closed under substitution;
\item 
if $\phi \in \Upsilon$ then $\Upsilon \vdash_{\mathsf{L}} \phi$;
\item
if $\Upsilon \vdash_{\mathsf{L}} \phi$ and for every $\gamma \in \Upsilon$ we have $\Xi \vdash_{\mathsf{L}} \gamma$, then $\Xi \vdash_{\mathsf{L}} \phi$;
\item
if $\Upsilon \vdash_{\mathsf{L}} \phi$ then there exists a finite set $\Upsilon' \subseteq \Upsilon$ such that $\Upsilon' \vdash_{\mathsf{L}} \phi$.
\end{itemize}
Logic of a consequence relation $\vdash_{\mathsf{L}}$, denoted by $\mathsf{L}$, is defined as the set of formulas $\phi$ such that $\vdash_{\mathsf{L}} \phi$.
\end{dfn}

The relation $\vdash_{\mathsf{BPC}}$ defined before is an example of a consequence relation.

\begin{dfn} \label{DfnSuperBasicConsequenceRelation}
By a \emph{super-basic consequence relation}, $\vdash_{\mathsf{L}}$, we mean a consequence relation for formulas in the language $\mathcal{L}$, which is closed under the rules and initial sequents of the sequent calculus $\mathbf{BPC}$, i.e., 
\begin{itemize}
\item
for any initial sequent of $\mathbf{BPC}$ of the form $\Upsilon \Rightarrow \Xi$, we have $\Upsilon \vdash_{\mathsf{L}} \bigvee \Xi$,
\item
for any rule of $\mathbf{BPC}$ with premises $\Upsilon_i \Rightarrow \Xi_i$ for $i \in \{1,2\}$ and the conclusion $\Upsilon \Rightarrow \Xi$, if $\Upsilon_i \vdash_{\mathsf{L}} \bigvee \Xi_i$ for each premise, then $\Upsilon \vdash_{\mathsf{L}} \bigvee \Xi$.
\end{itemize}
A consequence relation is called \emph{super-intuitionistic} when it is closed under the rules and initial sequents of the sequent calculus $\mathbf{LJ}$. The logic of a super-basic (super-intuitionistic) consequence relation is called a super-basic (super-intuitionistic) logic.
\end{dfn}

Clearly, $\vdash_{\mathsf{BPC}}$ and $\vdash_{\mathsf{EBPC}}$ are super-basic consequence relations, and $\mathsf{BPC}$ and $\mathsf{EBPC}$ are super-basic logics. It is easy to see that any super-intuitionistic consequence relation (logic) is also a super-basic one. Moreover, if $\Upsilon \vdash_{\mathsf{BPC}} \phi$ with the proof $\pi$, then for any super-basic logic $\mathsf{L}$ we have $\Upsilon \vdash_{\mathsf{L}} \phi$ with the same proof $\pi$. The same property holds for $\mathsf{IPC}$ and any super-intuitionistic logic.

\begin{rem} 
We have defined a substructural logic and a super-basic logic in different ways in Definitions \ref{DfnSubstructuralLogic} and \ref{DfnSuperBasicConsequenceRelation}, respectively. In the former, a substructural logic is defined as a set of formulas satisfying some conditions and in the latter, a super-basic logic is defined by its consequence relation. The reason is that in the absence of the modus ponens rule, defining a logic as a set of formulas or using its consequence relation to define it, may result in different sets. The latter is a primitive notion for logics lacking the modus ponens rule, because these logics are usually defined by their sequent calculi. Therefore, we chose to define a super-basic logic by its cosequence relation.
\end{rem}

For a set of formulas $\Upsilon$ consider the sequent calculus derived by adding $\Rightarrow \gamma$ for all $\gamma \in \Upsilon$ to $\mathbf{LJ}$ as initial sequents and denote it by $\mathbf{LJ}_{\Upsilon}$. For a super-intuitionistic logic $\mathsf{L}$ by $\mathsf{L} + \Upsilon$, we mean the logic of the least consequence relation closed under the rules and initial sequents of $\mathbf{LJ}_{\Upsilon}$. We can define Jankov's logic, $\mathsf{KC}$ by adding the weak excluded middle formula to the intuitionistic logic, i.e., $\mathsf{KC}=\mathsf{IPC} + \neg p \vee \neg \neg p$. Although we do not use Kripke models in this paper, for the interested reader we mention that the condition on the Kripke models for this logic is being directed.
The axioms $BD_n$ are defined in the following way:
\[
BD_0 := \bot \;\;\; , \;\;\; BD_{n+1} := p_n \vee (p_n \to BD_n). 
\]
The logic of bounded depth $\mathsf{BD_n}$ is then defined as $\mathsf{IPC + } BD_n$. 
Define the logic $\mathsf{T_k}$ as 
\[
\mathsf{IPC} + \bigwedge_{i=0}^{k}((p_i \to \bigvee_{j \neq i} p_j) \to \bigvee_{j}p_j) \to \bigvee_i p_i.
\]
A super-intuitionistic logic $\mathsf{L}$ has branching $k$ if $\mathsf{T_k} \subseteq \mathsf{L}$. We say a super-intuitionistic logic $\mathsf{L}$ has finite branching if there exists a number $k$ such that $\mathsf{L}$ has branching less than or equal to $k$, otherwise we call it infinite branching. The following theorem by Je\v{r}\'{a}bek obtains a nice characterization of super-intuitionistic logics of infinite branching. Since the results in this paper concern these logics, we will present the theorem, although we will not use it in any of our future discussions. 

\begin{thm} \cite[Theorem 6.9]{Jerabek} \label{ThmInfiniteBranching} 
Let $\mathsf{L}$ be a super-intuitionistic logic. Then, $\mathsf{L}$ has infinite branching if and only if $\mathsf{L} \subseteq \mathsf{BD_2}$ or $\mathsf{L} \subseteq \mathsf{KC+BD_3}$.
\end{thm}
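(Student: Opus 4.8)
The plan is to pass entirely to Kripke (or Esakia) semantics and read off each of the named logics as a class of rooted frames, exploiting the standard Galois connection for super-intuitionistic logics: $\mathsf{L}_1\subseteq\mathsf{L}_2$ holds exactly when every frame validating $\mathsf{L}_2$ also validates $\mathsf{L}_1$, so that a stronger logic corresponds to a smaller frame class. Concretely I would first record the frame-theoretic meaning of the axioms involved: $\mathsf{BD_n}$ is the logic of all rooted frames of depth at most $n$; $\mathsf{KC}$ is the logic of directed frames (those in which every generated subframe has a single maximal cluster); and the branching axiom $\mathsf{T_k}$ is refuted on a rooted frame precisely when its root dominates $k+1$ pairwise independent points, so that $\mathsf{T_k}\subseteq\mathsf{L}$ says every frame of $\mathsf{L}$ has branching at most $k$. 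Under this dictionary, ``$\mathsf{L}$ has infinite branching'' becomes the purely semantic statement that the frame class of $\mathsf{L}$ contains rooted frames of arbitrarily large branching.

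For the direction from right to left I would exhibit explicit high-branching frames inside the two target logics. Let $F_n$ be the depth-$2$ fan consisting of a root below $n$ distinct maximal points; it has depth $2$ and branching $n$, and validates $\mathsf{BD_2}$. Let $D_n$ be the directed depth-$3$ frame obtained from $F_n$ by adding a single top point above all $n$ middle points; it is directed, has depth $3$ and branching $n$, and validates $\mathsf{KC+BD_3}$. If $\mathsf{L}\subseteq\mathsf{BD_2}$ then every $F_n$ validates $\mathsf{L}$, and if $\mathsf{L}\subseteq\mathsf{KC+BD_3}$ then every $D_n$ validates $\mathsf{L}$; in either case $\mathsf{L}$ validates frames of unbounded branching, i.e.\ has infinite branching.

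The substance is the left-to-right direction, which I would attack contrapositively via the structure of large-branching witnesses. Assuming infinite branching, fix for each $k$ a rooted frame $G_k\models\mathsf{L}$ of branching $\ge k$. Validity of $\mathsf{L}$ is preserved under generated subframes and p-morphic images, so from each $G_k$ I can extract a minimal witness: a rooted frame whose root lies below a $k$-element antichain. I would then split on where this antichain can be placed. If the antichain can be taken to consist of maximal points, collapsing the frame above it by a p-morphism yields the fan $F_k$; closure of the frame class then forces every $F_k$, and hence (by finite-model arguments for $\mathsf{BD_2}$) every depth-$2$ frame, to validate $\mathsf{L}$, giving $\mathsf{L}\subseteq\mathsf{BD_2}$. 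Otherwise the antichain sits strictly below the maximal layer, and one shows that unless the frames are directed and of depth at most $3$ the branching would be capped: any admitted non-directed or depth-$\ge 4$ configuration above the antichain yields, via its Jankov--de Jongh formula, a theorem of $\mathsf{L}$ bounding the number of independent successors. Ruling these out forces every directed depth-$3$ frame $D_k$ to validate $\mathsf{L}$, i.e.\ $\mathsf{L}\subseteq\mathsf{KC+BD_3}$.

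The main obstacle is exactly this case analysis in the only-if direction: proving that unbounded branching is incompatible with every frame shape other than the two named families. The clean way to organize it is to isolate a finite list of ``critical'' finite frames that are not generated subframes or p-morphic images of any $F_n$ or $D_n$, and to verify that the Jankov formula of each such frame, once it belongs to $\mathsf{L}$, imposes a finite branching bound; infinite branching then excludes all of them and pins $\mathsf{L}$ under one of the two maxima. I expect the delicate points to be the treatment of infinite (descriptive) frames, handled either through the finite model property of the relevant logics or through canonical-formula refinements of the Jankov technique, and the bookkeeping needed to guarantee that the collapsing p-morphisms preserve branching and validity simultaneously.
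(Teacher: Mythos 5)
You should first be aware that the paper contains no proof of this statement: it is quoted verbatim from Je\v{r}\'{a}bek (Theorem 6.9 of the cited paper), and the surrounding text explicitly says the result will not be used in the sequel. So there is no proof in the paper to compare against; your proposal has to stand on its own, measured against Je\v{r}\'{a}bek's actual argument, which is carried out with descriptive-frame/canonical-formula machinery. With that said, your right-to-left direction is correct in outline: the fans $F_n$ validate $\mathsf{BD_2}$, the tacks $D_n$ validate $\mathsf{KC+BD_3}$, both refute the axiom of $\mathsf{T}_k$ once $n>k$, and since $\mathsf{BD_2}$ and $\mathsf{KC+BD_3}$ are sound for these frames, either inclusion forces $\mathsf{T}_k\not\subseteq\mathsf{L}$ for every $k$.

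The left-to-right direction, however, has two genuine gaps. The more serious one is that your semantic dictionary for the branching axioms is wrong: refuting $\mathsf{T}_k$ on a finite rooted frame is \emph{not} equivalent to the root dominating $k+1$ pairwise incomparable points; the correct condition is that some point has more than $k$ \emph{immediate} successors. Concretely, take the depth-$2$ binary tree: a root, two middle points, each middle point below two incomparable maximal points. Its root dominates a $4$-element antichain of maximal points, yet it is a binary tree and so validates the Gabbay--de Jongh axiom of $\mathsf{T}_2$; since that axiom is refuted by $F_3$ and $F_4$, and validity is preserved under generated subframes and p-morphic images, no generated subframe of this tree maps p-morphically onto $F_3$ or $F_4$. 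Hence your pivotal step, ``an antichain of maximal points above the root collapses onto the fan $F_k$,'' is false as stated, and the case analysis built on it does not get off the ground. The repair is to start from the immediate-successor configuration that refuting $bb_k$ really provides, split those immediate successors into maximal and non-maximal ones, and construct two different p-morphisms (onto a fan when enough of them are maximal, onto a tack when enough are non-maximal); this dichotomy, not a finite list of ``critical frames'' excluded by Jankov formulas, is the actual content of the theorem, and it is exactly what your sketch omits. The second gap is that the theorem is asserted for arbitrary super-intuitionistic logics, which need not be Kripke complete, while you fix Kripke frames $G_k\models\mathsf{L}$ of large branching. The paper's definition of infinite branching is purely syntactic ($\mathsf{T}_k\not\subseteq\mathsf{L}$ for all $k$), and from it one only obtains a Heyting algebra, equivalently a descriptive frame, of $\mathsf{L}$ refuting $bb_k$; for a Kripke-incomplete $\mathsf{L}$ the Kripke frames of $\mathsf{L}$ may validate a strictly stronger logic, possibly of bounded branching, so arguing about them proves nothing about $\mathsf{L}$. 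You flag this difficulty in your closing sentence, but deferring it to ``canonical-formula refinements'' leaves precisely the hard part unproven: carrying out the immediate-successor extraction and the two collapses inside descriptive frames, where immediate successors need not even exist.
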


\section{Frege and extended Frege systems} \label{SectionFregeExtendedFrege}
The purpose of this section is to introduce Frege and extended Frege systems for substructural and super-basic logics. For this matter, we will recall and generalize some basic concepts in proof complexity. For more background the reader may consult \cite{KrajicekProof}.

\begin{dfn} \label{dfn1}
Let $\mathsf{L}$ be a set of finite strings over a finite alphabet. A \emph{(propositional) proof system for $\mathsf{L}$} is a polynomial-time function $\mathbf{P}$ with the range $\mathsf{L}$. Any string $\pi$ such that $\mathbf{P}(\pi)= \phi$ is a \emph{$\mathbf{P}$-proof} of the string $\phi$, also denoted by $\mathbf{P} \vdash^{\pi} \phi$.
We denote proof systems by bold-face capital Roman letters $\mathbf{P}, \mathbf{Q}$.
\end{dfn}
We often consider proof systems for a logic $\mathsf{L}$. 
The usual Hilbert-style systems with finitely many axiom schemes and Gentzen's sequent calculi are instances of propositional proof systems for their corresponding logics, because in polynomial time one can simply decide whether a finite string is a proof in the system or not.\\
By the length of a formula $\phi$, or a proof $\pi$, we mean the number of symbols it contains and we denote them by $|\phi|$ and $|\pi|$, respectively. 

\begin{dfn} \label{DfnProofSystemSimulate}
Let $\mathbf{P}$ and $\mathbf{Q}$ be two proof systems for logics $\mathsf{L}_{\mathbf{P}}$ and $\mathsf{L}_{\mathbf{Q}}$ with the languages $\mathcal{L}_{\mathbf{P}}$ and $\mathcal{L}_{\mathbf{Q}}$, respectively. Let $tr$ be a polynomial-time translation function from the strings in the language $\mathcal{L}_{\mathbf{P}}$ to the strings in the language $\mathcal{L}_{\mathbf{Q}}$.
\begin{itemize}
\item[$(i)$]
We say \emph{$\mathbf{Q}$ is at least as strong as $\mathbf{P}$ with respect to $tr$}, iff for any $\pi$ there exists $\pi'$ such that $tr(\mathbf{P}(\pi))= \mathbf{Q}(\pi')$. 
\item[$(ii)$]
We say that \emph{$\mathbf{Q}$ simulates $\mathbf{P}$} (or \emph{$\mathbf{P}$ is simulated by $\mathbf{Q}$}) \emph{with respect to $tr$}, denoted by $\mathbf{P} \leq^{tr} \mathbf{Q}$, iff there exists a polynomial function $g : N \to N$  such that for any $\pi$ 
there exists $\pi'$ for which we have $tr(\mathbf{P}(\pi))= \mathbf{Q}(\pi')$ and $|\pi'| \leq g(|\pi|)$.
\item[$(iii)$]
By $\mathsf{L}_{\mathbf{P}} \subseteq^{tr} \mathsf{L}_{\mathbf{Q}}$, we mean that for any $\phi \in \mathsf{L}_{\mathbf{P}}$ we have $tr(\phi) \in \mathsf{L}_{\mathbf{Q}}$.
\end{itemize}
\end{dfn}
In the parts $(i)$ and $(ii)$ of the definition, for the simpler case when $\mathsf{L}_{\mathbf{P}} \subseteq \mathsf{L}_{\mathbf{Q}}$ and the translation function is the inclusion function, we drop the phrase ``with respect to $tr$", and we simply write $\mathbf{P} \leq \mathbf{Q}$ in $(ii)$.
In the case that $\mathsf{L}_{\mathbf{P}} = \mathsf{L}_{\mathbf{Q}}$ and the translation function is the identity function, we say that the proof systems $\mathbf{P}$ and $\mathbf{Q}$ are equivalent when they simulate each other. \\

In the following, we present a translation function $t$ that enables us to translate $\mathcal{L^*}$ into $\mathcal{L}$ and hence to carry out results in systems with the language $\mathcal{L}$ to systems with the language $\mathcal{L}^*$. This translation function has a similar effect as bringing back the structural rules to the systems:

\begin{dfn} \label{DfnTranslation}
Define the function $t: \mathcal{L}^* \to \mathcal{L}$ as follows:
\begin{itemize}
\item[$\bullet$]
$p^t=p$, where $p$ is a propositional variable;
\item[$\bullet$]
$0^t= \bot$, $1^t= \top$;
\item[$\bullet$]
$(\phi \circ \psi)^t = \phi^t \circ \psi^t$, where $\circ \in \{\wedge, \vee\}$;
\item[$\bullet$]
$(\phi * \psi)^t= \phi^t \wedge \psi^t$;
\item[$\bullet$]
$(\psi / \phi)^t = (\phi \setminus \psi)^t = \phi^t \to \psi^t$.
\end{itemize}
For $\Gamma$, a finite sequence of formulas $\gamma_1, \gamma_2, \ldots, \gamma_n$, by $\Gamma^t$ we mean the sequence of formulas $\gamma_1^t, \gamma_2^t \ldots, \gamma_n^t$. It is easy to see that $|\phi^t|= |\phi|$.
\end{dfn}

The following lemma, which will be used in the future sections, is an example of how the translation $t$ works. It expresses the relation between sequents provable in the sequent calculus $\mathbf{WL}$ and the translated version of the sequents in the system $\mathbf{BPC}$.
\begin{lem} \label{LemTranslationWLBPC}
Let $\Gamma$ be a sequence of formulas and $A$ be a formula in the language of $\mathbf{WL}$, i.e., $\{1, \bot, \wedge, \vee, *, \setminus\}$. Then
\[
\mathbf{WL} \vdash \Gamma \Rightarrow A \;\; \textit{implies} \;\; \mathbf{BPC} \vdash \Gamma^t \Rightarrow A^t.
\]
\end{lem}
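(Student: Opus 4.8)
The plan is to argue by induction on the structure (height) of the given $\mathbf{WL}$-derivation of $\Gamma \Rightarrow A$. Recall that the translation $t$ collapses the multiplicative data of $\mathbf{WL}$ into the additive world---$1^t = \top$ and $(\phi * \psi)^t = \phi^t \wedge \psi^t$---while $\setminus$ becomes $\to$ and $\bot$ is left untouched; moreover an antecedent \emph{sequence} $\Gamma$ of $\mathbf{WL}$ is read as the corresponding \emph{multiset} $\Gamma^t$ on the $\mathbf{BPC}$ side, so that the built-in exchange of $\mathbf{BPC}$ absorbs all reorderings. The engine that makes the induction go through is the fact, recalled above from \cite{Basic}, that left and right weakening and contraction are admissible in $\mathbf{BPC}$; these admissible structural rules compensate both for the complete absence of structural rules in $\mathbf{WL}$ and for the mismatch between $\mathbf{WL}$'s multiplicative rules and $\mathbf{BPC}$'s additive ones. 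To keep the induction self-contained I would prove the statement for all single-conclusion sequents, translating an empty succedent to an empty $\mathbf{BPC}$-succedent; alternatively one notes that once the end-sequent carries a genuine formula on the right, no sequent with empty succedent can occur in its derivation, since no rule of $\mathbf{WL}$ turns an empty succedent into a nonempty one.

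For the base cases, $\phi \Rightarrow \phi$, $\Rightarrow 1$ and $\Gamma, \bot, \Sigma \Rightarrow \Delta$ translate respectively to the $\mathbf{BPC}$ initial sequents $\phi^t \Rightarrow \phi^t$, $\Rightarrow \top$ and $\Gamma^t, \bot, \Sigma^t \Rightarrow \Delta^t$, using $1^t = \top$ and $\bot^t = \bot$. The rules that are already additive transfer almost verbatim: $(R\wedge)$, $(L\vee)$ and $(L*)$---whose translation is exactly $\mathbf{BPC}$'s $(L\wedge)$---map to the identically shaped $\mathbf{BPC}$ rules once sequences are read as multisets, and the single-conclusion cut of $\mathbf{WL}$ maps directly to the cut of $\mathbf{BPC}$, the merging of antecedents $\Sigma, \Gamma, \Pi$ being precisely the multiset union performed by $\mathbf{BPC}$-cut. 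The rules $(L\wedge_1)$, $(L\wedge_2)$, $(R\vee_1)$, $(R\vee_2)$ and $(1w)$ require only a small adjustment: the $\mathbf{BPC}$ rules $(L\wedge)$ and $(R\vee)$ act on two formulas at once, whereas these $\mathbf{WL}$ rules introduce a single connective, so I would first use admissible weakening to insert the missing conjunct or disjunct and then apply the corresponding $\mathbf{BPC}$ rule; $(1w)$ is simply a left weakening by $\top$. Finally $(R\setminus)$ maps directly to $(R\to)$, since its premise $\phi, \Gamma \Rightarrow \psi$ translates to $\phi^t, \Gamma^t \Rightarrow \psi^t$, exactly the shape $(R\to)$ demands.

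The one genuinely delicate case---and the main obstacle---is $(R*)$, because there fusion behaves \emph{multiplicatively}: its two premises $\Gamma \Rightarrow \phi$ and $\Sigma \Rightarrow \psi$ carry \emph{distinct} antecedents which are concatenated in the conclusion $\Gamma, \Sigma \Rightarrow \phi * \psi$, whereas the natural target $(R\wedge)$ of $\mathbf{BPC}$ is \emph{additive} and insists on a \emph{shared} antecedent. This is exactly where the admissible weakening of $\mathbf{BPC}$ pays off: from the induction hypotheses $\Gamma^t \Rightarrow \phi^t$ and $\Sigma^t \Rightarrow \psi^t$ I would weaken on the left to obtain $\Gamma^t, \Sigma^t \Rightarrow \phi^t$ and $\Gamma^t, \Sigma^t \Rightarrow \psi^t$, and then apply $(R\wedge)$ over the common antecedent $\Gamma^t, \Sigma^t$ to conclude $\Gamma^t, \Sigma^t \Rightarrow \phi^t \wedge \psi^t$, which is precisely $(\Gamma, \Sigma)^t \Rightarrow (\phi * \psi)^t$. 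This step is the crux: it is the translation's collapse of $*$ onto $\wedge$, backed by the structural admissibility in $\mathbf{BPC}$, that allows a structure-free multiplicative calculus to be simulated inside the structurally rich system $\mathbf{BPC}$. With every rule and initial sequent thus accounted for, the induction closes and the lemma follows.
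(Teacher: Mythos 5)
Your proposal is correct and follows essentially the same route as the paper: a structural induction on the $\mathbf{WL}$-derivation, relying on the admissibility of weakening (and the built-in exchange) in $\mathbf{BPC}$, with the multiplicative rule $(R*)$ handled exactly as in the paper's proof---weakening both translated premises to the common antecedent $\Gamma^t, \Sigma^t$ and then applying $(R\wedge)$. Your treatment is in fact more detailed than the paper's (which only works out the $(R*)$ case and declares the rest easy), and your side remark on empty succedents is a correct, harmless addition.
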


\begin{proof}
It follows by a straightforward induction on the structure of the proof of $\Gamma \Rightarrow A$ in $\mathbf{WL}$. Note that as mentioned earlier, the left contraction rule and both right and left weakening rules are derivable in $\mathbf{BPC}$ and the exchange rules are built in. The cases for the other rules are easy. As an example, suppose that the last rule in the proof of $\Gamma \Rightarrow A$ is $(R*)$:
\begin{center}
 \begin{tabular}{c}
  \AxiomC{$\Sigma \Rightarrow \phi$}
   \AxiomC{$\Pi \Rightarrow \psi$}
 \BinaryInfC{$\Sigma , \Pi \Rightarrow \phi * \psi$}
 \DisplayProof
\end{tabular}
\end{center}
where $\Gamma= \Sigma , \Pi$ and $A = \phi * \psi$. Then, by the induction hypothesis, we have $\mathbf{BPC} \vdash \Sigma^t \Rightarrow \phi^t$ and $\mathbf{BPC} \vdash \Pi^t \Rightarrow \psi^t$. Since the left weakening rule is admissible in $\mathbf{BPC}$, we have both $\mathbf{BPC} \vdash \Sigma^t, \Pi^t \Rightarrow \phi^t$ and $\mathbf{BPC} \vdash \Sigma^t, \Pi^t \Rightarrow \psi^t$. Using the rule $(R \wedge)$ we obtain $\mathbf{BPC} \vdash \Sigma^t, \Pi^t \Rightarrow \phi^t \wedge \psi^t$, which is what we wanted.
\end{proof}

\begin{rem} \label{RemLM}
For any substructural logic $\mathsf{L}$ and any super-intuitionistic logic $\mathsf{M}$, it is easy to see that $\mathsf{L} \subseteq^t \mathsf{M}$ implies the stronger form:
\[
\phi_1, \ldots, \phi_n \vdash_{\mathsf{L}} \phi \;\;\;  \textit{implies} \;\;\; \phi_1^t, \ldots, \phi_n^t \vdash_{\mathsf{M}} \phi^t.
\]
Here is the sketch of the proof. If there exists $i$ such that $\phi=\phi_i$, the proof is obvious. If $\phi \in \mathsf{L}$, the claim follows from $\mathsf{L} \subseteq^t \mathsf{M}$. If $\phi$ is derived by an $\mathbf{FL}$-rule \begin{tabular}{c}
  \AxiomC{$\psi_1$}
   \AxiomC{$\ldots$}
   \AxiomC{$\psi_m$}
 \TrinaryInfC{$\phi$}
 \DisplayProof
\end{tabular},
then it is easy to see that 
\begin{tabular}{c}
  \AxiomC{$\psi_1^t$}
   \AxiomC{$\ldots$}
   \AxiomC{$\psi_m^t$}
 \TrinaryInfC{$\phi^t$}
 \DisplayProof
\end{tabular}
is derivable in $\mathbf{LJ}$. And, by Definition \ref{DfnSuperBasicConsequenceRelation}, the consequence relation $\vdash_{\mathsf{M}}$ is closed under the rules and initial sequents of $\mathbf{LJ}$. 
\end{rem}

In the following, we will define Frege and extended Frege systems for substructural and super-basic logics.

\begin{dfn} \label{dfnInferenceSystem} 
An \emph{inference system} $\mathbf{P}$ is defined by a set of rules of the form 
\begin{center}
 \begin{tabular}{c}
  \AxiomC{$\phi_1$}
   \AxiomC{$\ldots$}
   \AxiomC{$\phi_m$}
 \TrinaryInfC{$\phi$}
 \DisplayProof
\end{tabular}
\end{center}
where for any $1 \leq i \leq m$, the formulas $\phi_i$ are called the \emph{premises} of the rule, and the formula $\phi$ is called its \emph{conclusion}. A rule with no premise is called an \emph{axiom}. A \emph{$\mathbf{P}$-proof}, $\pi$, of a formula $\phi$ from a set of formulas $X$ is defined as a sequence of formulas $\phi_1, \ldots , \phi_n=\phi$, where for $1 \leq i \leq n$, either $\phi_i \in X$ or $\phi_i$ is derived from some $\phi_j$'s, $j < i$, by a substitution instance of a rule of the system $\mathbf{P}$.  If the set $X$ is empty, then we say that the formula $\phi$ is provable in $\mathbf{P}$. Each $\phi_i$ is called a \emph{step} or a \emph{line} in the proof $\pi$. The number of lines of a proof $\pi$ is denoted by $\lambda(\pi)$ and it is clear that it is less than or equal to the length of the proof. The set of all provable formulas in $\mathbf{P}$ is called its logic.
If there is a $\mathbf{P}$-proof for $\phi$ from assumptions $\phi_1, \ldots, \phi_n$, we write $\phi_1, \ldots , \phi_n \vdash_{\mathbf{P}} \phi$. Specially, for every rule of the above form we have $\phi_1, \ldots, \phi_m \vdash_{\mathbf{P}} \phi$. 
\end{dfn}

There are two measures for the complexity of proofs in proof systems. The first one is the length of the proof and the other is the number of proof steps (also called proof-lines). This only makes sense for proof systems in which the proofs consist of lines containing formulas or sequents. Hilbert-style proof systems, Gentzen's sequent calculi, and Frege systems are examples of such proof systems.

\begin{dfn} \label{DfnFrege}
Let $\mathsf{L}$ and $\mathsf{M}$ be two substructural or two super-basic logics such that $\mathsf{L} \subseteq \mathsf{M}$. The inference system $\mathbf{P}$ is called a \emph{Frege system for $\mathsf{L}$ with respect to $\mathsf{M}$}, for short an $\mathsf{L}-\mathbf{F}$ system wrt $\mathsf{M}$, if it satisfies the following conditions:
\begin{itemize}
\item[$(1)$]
$\mathbf{P}$ has finitely many rules,
\item[$(2)$]
$\mathbf{P}$ is sound: if $\vdash_{\mathbf{P}} \phi$, then $\phi \in \mathsf{L}$,
\item[$(3)$]
$\mathbf{P}$ is strongly complete: if $\phi_1, \ldots, \phi_n \vdash_{\mathsf{L}} \phi$, then $\phi_1, \ldots, \phi_n \vdash_{\mathbf{P}} \phi$.
\item[$(4)$]
every rule in $\mathbf{P}$ is $\mathsf{M}$-standard: if 
 \begin{tabular}{c}
  \AxiomC{$\phi_1$}
   \AxiomC{$\ldots$}
   \AxiomC{$\phi_m$}
 \TrinaryInfC{$\phi$}
 \DisplayProof
\end{tabular} 
is a rule in $\mathbf{P}$, then $\phi_1, \ldots , \phi_m \vdash_{\mathsf{M}} \phi$.
\end{itemize}
In the case that $\mathsf{L} = \mathsf{M}$, we simply call this system a \emph{Frege system for $\mathsf{L}$}.
\end{dfn}
Here are some remarks.
Using the condition $(4)$, it is easy to see that any Frege system $\mathbf{P}$ for $\mathsf{L}$ wrt $\mathsf{M}$ has the property that if $\phi_1, \ldots, \phi_n \vdash_{\mathbf{P}} \phi$, then $\phi_1, \ldots, \phi_n \vdash_{\mathsf{M}} \phi$.
For a substructural logic $\mathsf{L}$, we will only consider Frege systems for $\mathsf{L}$ wrt $\mathsf{L}$, i.e., $\mathsf{M}=\mathsf{L}$. For $S$ a subset of $\{e, i, o, c\}$, the Hilbert-style proof system $\mathbf{HFL_S}$ is an example of a Frege system for the basic substructural logic $\mathsf{FL_S}$ (see \cite[Section 2.5]{Ono}). The usual Hilbert-style systems for classical and intuitionistic logics, $\mathbf{HK}$ and $\mathbf{HJ}$, are also examples of Frege systems for $\mathsf{CPC}$ and $\mathsf{IPC}$, respectively; see \cite[Sections 1.3.1 and 1.3.3]{Ono}. \\
Usually, a Frege system for a logic $\mathsf{L}$ is defined by some $\mathsf{L}$-standard rules in the sense of the condition $(4)$ in Definition \ref{DfnFrege}. This condition is useful to establish the uniqueness of these systems up to equivalence (see Lemma \ref{LemEquivalenceOfEveryFrege}). However, in this paper we generalize the usual definition to add another and possibly stronger logic $\mathsf{M}$ as a parameter to control the derivability of the rules. The logic $\mathsf{M}$ is not necessarily equal to $\mathsf{L}$. The reason for this choice is the somehow strange behaviour of some Hilbert-style proof systems for some super-basic logics. For instance, any natural Hilbert-style system for $\mathsf{BPC}$ includes the modus pones rule (see for instance Theorem \ref{ThmFregeForBPC}). While this rule is admissible in $\mathsf{BPC}$ and hence harmless to the soundness of the system, it can not be derivable inside the logic $\mathsf{BPC}$ itself, i.e, $\phi , \phi \to \psi \nvdash_{\mathsf{BPC}} \psi$. Therefore, the modus ponens rule violates the $\mathsf{BPC}$-standradness condition. To address such systems, it may be reasonable to relax the $\mathsf{BPC}$-standardness condition a bit to also include the modus ponense rule. The smallest logic containing $\mathsf{BPC}$ and modus ponens is $\mathsf{IPC}$ and hence we have to pick $\mathsf{M}=\mathsf{IPC}$ as our controlling parameter. Although this choice of definition may seem a bit artificial, it actually serves our goal better than the usual systems. The aim of the present paper is establishing a lower bound for any possible Frege system for some classes of logics and addressing a larger class of Frege systems with a possibly stronger parameter $\mathsf{M}$ is admittedly a stronger result. Moreover, later in the last section we will even use the mentioned strange system for $\mathsf{BPC}$ to provide a lower bound for the usual natural sequent-style proof system for $\mathsf{BPC}$. Therefore, investigating this larger class of systems is both strengthening and useful.\\

The following theorem constructs a Frege system for $\mathsf{BPC}$ with respect to $\mathsf{IPC}$.

\begin{thm}\label{ThmFregeForBPC}
There exists a Frege system $\mathbf{P}$ for $\mathsf{BPC}$ wrt $\mathsf{IPC}$ such that for multisets of formulas $\Upsilon=\gamma_1, \ldots, \gamma_m$, and $\Xi=\delta_1, \ldots , \delta_n$, if $\mathbf{BPC} \vdash^{\pi} \Upsilon \Rightarrow \Xi$ then
\[
\mathbf{P} \vdash^{\pi'} \bigwedge_{i=1}^m \gamma_i \to \bigvee_{j=1}^n \delta_j
\]
and $\lambda (\pi')$ is equal to the number of the sequents in $\pi$.
\end{thm}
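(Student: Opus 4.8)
The plan is to construct the Frege system $\mathbf{P}$ explicitly so that each sequent rule of $\mathbf{BPC}$ is mirrored, line for line, by a derivation step on the corresponding interpreted formulas. Concretely, I would take the rules of $\mathbf{P}$ to be exactly the \emph{interpreted} versions of the $\mathbf{BPC}$ initial sequents and rules: for each rule of $\mathbf{BPC}$ with premises $\Upsilon_i \Rightarrow \Xi_i$ and conclusion $\Upsilon \Rightarrow \Xi$, I include the Frege rule whose premises are $I(\Upsilon_i \Rightarrow \Xi_i) = \bigwedge \Upsilon_i \to \bigvee \Xi_i$ and whose conclusion is $I(\Upsilon \Rightarrow \Xi)$. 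Since $\mathbf{BPC}$ has finitely many rule-schemes, condition $(1)$ of Definition \ref{DfnFrege} is immediate. The translation $\pi \mapsto \pi'$ then simply replaces each sequent in $\pi$ by its interpretation, so $\lambda(\pi')$ equals the number of sequents in $\pi$ by construction.

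The work is then in verifying the four Frege conditions for this choice. For soundness (condition $(2)$), I would check that each interpreted rule preserves $\mathsf{BPC}$-provability of the formula on the right: if the premises $I(\Upsilon_i \Rightarrow \Xi_i)$ are in $\mathsf{BPC}$, then the interpreted conclusion is in $\mathsf{BPC}$. This follows because $\mathbf{BPC}$ is itself closed under these rules and, by Remark \ref{RemarkBPC}, $\mathsf{BPC}$ admits the relevant closure once we pass from sequents to their interpretations. For condition $(4)$, the $\mathsf{IPC}$-standardness, I would check that each interpreted rule is derivable in $\mathsf{IPC}$, i.e.\ $I(\Upsilon_1 \Rightarrow \Xi_1), \ldots \vdash_{\mathsf{IPC}} I(\Upsilon \Rightarrow \Xi)$; this is routine since every $\mathbf{BPC}$ rule is intuitionistically valid under the interpretation $\bigwedge \Upsilon \to \bigvee \Xi$, the transitivity rule $(Tr)$ and the implication-distribution rules $(F\wedge), (F\vee)$ being the only ones needing a moment's care, and these hold in $\mathsf{IPC}$ precisely because modus ponens is available there.

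The genuinely delicate condition is strong completeness (condition $(3)$): I must show that whenever $\phi_1, \ldots, \phi_n \vdash_{\mathsf{BPC}} \phi$, there is a $\mathbf{P}$-proof of $\phi$ from $\phi_1, \ldots, \phi_n$. Here I would unwind the definition of $\vdash_{\mathsf{BPC}}$: the hypothesis gives a $\mathbf{BPC}$-derivation of $\Rightarrow \phi$ after adding $\Rightarrow \phi_i$ as initial sequents, and interpreting this derivation yields a $\mathbf{P}$-derivation of $\phi$ whose leaves are the interpreted assumptions $I(\Rightarrow \phi_i) = \top \to \phi_i$ rather than $\phi_i$ itself. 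The main obstacle is precisely this bookkeeping gap between $\phi_i$ and $\top \to \phi_i$ (and similarly between the interpreted initial sequents and the bare assumptions): I expect to need a small fixed gadget of $\mathbf{P}$-rules, provable in one or two lines each, passing between $\psi$ and $\top \to \psi$, so that the assumption-formulas can be threaded through the interpreted derivation without disturbing the line count claim. Since these conversions are bounded-length and $\mathsf{IPC}$-standard, they preserve both soundness and the standardness parameter $\mathsf{M} = \mathsf{IPC}$, and the statement's line-count identity $\lambda(\pi') = \#\{\text{sequents in }\pi\}$ survives because the main interpreted derivation is in exact line-by-line correspondence with $\pi$.
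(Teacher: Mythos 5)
Your proposal is correct and uses the paper's core construction --- each $\mathbf{BPC}$ initial sequent and rule becomes a Frege rule on interpretations, and the line-count identity comes from translating the sequent proof line by line --- but your verification of strong completeness takes a genuinely different route. The paper equips $\mathbf{P}$ with two extra rules, modus ponens $(mp)$ and adjunction $(adj)$, and argues in three steps: (i) a context-weakening lemma ($\Upsilon \vdash_{\mathsf{BPC}} A$ implies $\mathbf{BPC} \vdash \Upsilon \Rightarrow A$, because every $\mathbf{BPC}$ rule tolerates an added antecedent context), (ii) interpretation of that single sequent proof to get $\vdash_{\mathbf{P}} \bigwedge \Upsilon \to A$, and (iii) adjunction to assemble $\bigwedge_i \phi_i$ from the hypotheses, followed by modus ponens. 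You instead interpret the hypothesis-augmented derivation of $\Rightarrow \phi$ from the added initial sequents $\Rightarrow \phi_i$ directly, and close the two endpoint gaps ($\phi_i$ versus $\top \to \phi_i$, and $\top \to \phi$ versus $\phi$) with bridging rules passing between $\psi$ and $\top \to \psi$; this works, avoids the paper's step (i) entirely, and your bridging rules are $\mathsf{IPC}$-standard and $\mathsf{BPC}$-sound by exactly the admissible modus ponens of Remark \ref{RemarkBPC} --- the same fact the paper needs for the soundness of its $(mp)$ rule. What the paper's choice buys is a deduction-theorem-style fact of independent use and a system with an explicit $(mp)$, which it invokes again in the final corollary of Section \ref{SectionSequentCalculus}; your system serves that later purpose equally well via the rule from $\top \to \psi$ to $\psi$. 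Two points to tighten: condition $(1)$ is not quite ``immediate'' --- the multiset contexts $\Upsilon, \Xi$ occurring in the $\mathbf{BPC}$ rule-schemes must be replaced by single fresh formula variables (the paper's $\gamma$ and $\delta$) so that $\mathbf{P}$ really is a finite set of rules over formula variables; and your soundness sketch should say explicitly that it inverts $\vdash \, \Rightarrow \gamma \to \chi$ into $\vdash \gamma \Rightarrow \chi$ using cut elimination for $\mathbf{BPC}$ before re-applying the corresponding sequent rule, which is the argument carried out in the paper's proof and in the proof of Remark \ref{RemarkBPC}.
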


\begin{proof}
Recall that for any sequent $S=\Upsilon \Rightarrow \Xi$ in the sequent calculus $\mathbf{BPC}$, the formula $I(S)$ is defined as $\bigwedge \Upsilon \to \bigvee \Xi$, where $\bigwedge \Upsilon= \top$ if $\Upsilon =\emptyset$ and $\bigvee \Xi = \bot$ if $\Xi=\emptyset$. Define a system $\mathbf{P}$ for $\mathsf{BPC}$ as the following. For any rule of the form 
\begin{center}
\begin{tabular}{c}
  \AxiomC{$T_1$}
   \AxiomC{$\ldots$}
   \AxiomC{$T_m$}
 \TrinaryInfC{$T$}
 \DisplayProof
\end{tabular} 
\end{center}
in $\mathbf{BPC}$, put $\Upsilon = \{\gamma\}$, $\Xi = \{\delta\}$, where $\gamma$ and $\delta$ are new formula variables, (except for the cut rule, where we put $\Upsilon_1 = \{A_1\}$, $\Upsilon_2 = \{A_2\}$, $\Xi_1 = \{B_1\}$, and $\Xi_2 \{B_2\}$, where $A_i$'s and $B_i$'s are new formula variables) and add the following rule to $\mathbf{P}$
\begin{center}
\begin{tabular}{c}
  \AxiomC{$I(T_1)$}
   \AxiomC{$\ldots$}
   \AxiomC{$I(T_m)$}
 \TrinaryInfC{$I(T)$}
 \DisplayProof
\end{tabular}
\end{center}
Note that this process also covers the initial sequents. Moreover, add the following two rules to $\mathbf{P}$:
\begin{center}
\begin{tabular}{c c}
  \AxiomC{$\phi$}
   \AxiomC{$\phi \to \psi$}
    \RightLabel{$(mp)$}
 \BinaryInfC{$\psi$}
 \DisplayProof
&
\hspace{2pt}
  \AxiomC{$\phi$}
   \AxiomC{$ \psi$}
    \RightLabel{$(adj)$}
 \BinaryInfC{$\phi \wedge \psi$}
 \DisplayProof
\end{tabular}
\end{center}
We will prove that $\mathbf{P}$ is a Frege system for $\mathsf{BPC}$ wrt $\mathsf{IPC}$. We have to check all the conditions of Definition \ref{DfnFrege}. First, it is an inference system with finitely many rules. Second, we have to show that $\mathbf{P}$ is sound, i.e., if $\mathbf{P} \vdash \phi$ then $\phi \in \mathsf{BPC}$. This can be proved using induction on the structure of the proof. As an example, suppose that the last rule used in the proof is 
\begin{center}
 \begin{tabular}{c}
  \AxiomC{$\Upsilon \Rightarrow (\phi \to \psi)$}
   \AxiomC{$\Upsilon \Rightarrow (\psi \to \theta)$}
   \RightLabel{$(Tr)$}
 \BinaryInfC{$\Upsilon \Rightarrow \Xi , (\phi \to \theta)$}
 \DisplayProof
\end{tabular}
\end{center}
whose corresponding Frege rule will be
\begin{center}
 \begin{tabular}{c}
  \AxiomC{$\gamma \to (\phi \to \psi)$}
   \AxiomC{$\gamma \to (\psi \to \theta)$}
 \BinaryInfC{$\gamma \to (\delta \vee (\phi \to \theta))$}
 \DisplayProof
\end{tabular}
\end{center}
By IH, $\gamma \to (\phi \to \psi) \in \mathsf{BPC}$ and $\gamma \to (\psi \to \theta) \in \mathsf{BPC}$. Therefore, $\mathbf{BPC} \vdash \, \Rightarrow \gamma \to (\phi \to \psi)$ and $\mathbf{BPC} \vdash \, \Rightarrow \gamma \to (\psi \to \theta)$. Since the cut elimination theorem holds in $\mathbf{BPC}$, we obtain  $\mathbf{BPC} \vdash \gamma \Rightarrow (\phi \to \psi)$ and $\mathbf{BPC} \vdash \gamma \Rightarrow (\psi \to \theta)$. Using the rule $(Tr)$ itself in $\mathbf{BPC}$, we get $\mathbf{BPC} \vdash (\gamma \Rightarrow \delta, \phi \to \theta)$ which implies $\mathbf{BPC} \vdash \, \Rightarrow \gamma \to (\delta \vee (\phi \to \theta))$, by the rules $(R \vee)$ and $(R \to)$. Hence $\gamma \to (\delta \vee (\phi \to \theta)) \in \mathsf{BPC}$. The cases for the other rules are similar.\\
Third, we have to show that $\mathbf{P}$ is strongly complete, i.e., if $\phi_1, \ldots, \phi_n \vdash_{\mathsf{BPC}} \phi$, then $\phi_1, \ldots, \phi_n \vdash_{\mathbf{P}} \phi$. It can be derived by showing the following: 
\begin{itemize}
\item[$(i)$]
If $\Upsilon \vdash_{\mathsf{BPC}} A$ then $\mathbf{BPC} \vdash \Upsilon \Rightarrow A$;
\item[$(ii)$]
if $\mathbf{BPC} \vdash \Upsilon \Rightarrow \Xi$ then $\mathbf{P} \vdash \bigwedge \Upsilon \to \bigvee \Xi$;
\item[$(iii)$]
$\phi_1, \ldots, \phi_n \vdash_{\mathbf{P}} \bigwedge_{i=1}^n \phi_i$.
\end{itemize}
The sketch of the proof for each part follows. 
\begin{itemize}
\item[$(i)$]
Observe that each rule in $\mathbf{BPC}$ has a context both in the antecedent of the premises and the conclusion. Therefore, for every rule of the form
\begin{center}
 \begin{tabular}{c}
  \AxiomC{$\Upsilon_1 \Rightarrow \Xi_1$}
   \AxiomC{$\ldots$}
   \AxiomC{$\Upsilon_n \Rightarrow \Xi_n$}
 \TrinaryInfC{$\Upsilon \Rightarrow \Xi$}
 \DisplayProof
\end{tabular}
\end{center}
the following is also a rule in $\mathbf{BPC}$, for any multiset $\Omega$
\begin{center}
 \begin{tabular}{c}
  \AxiomC{$\Omega, \Upsilon_1 \Rightarrow \Xi_1$}
   \AxiomC{$\ldots$}
   \AxiomC{$\Omega, \Upsilon_n \Rightarrow \Xi_n$}
 \TrinaryInfC{$\Omega, \Upsilon \Rightarrow \Xi$}
 \DisplayProof
\end{tabular}
\end{center}
It means that if we add a context $\Omega$ to the antecedents of all sequents in a proof, the result is also a proof in $\mathbf{BPC}$. Now, suppose $\Upsilon \vdash_{\mathsf{BPC}} A$ where $\Upsilon = \gamma_1, \ldots, \gamma_m$. Therefore, there exists a proof for $\Rightarrow A$ with $\Rightarrow \gamma_1, \ldots, \Rightarrow \gamma_m$ as initial sequents, in $\mathbf{BPC}$. Based on the observation, we can add $\Upsilon$ to the antecedent of each sequent in the proof and get a proof for $\Upsilon \Rightarrow A$ in $\mathbf{BPC}$ from the initial sequents $\Upsilon \Rightarrow \gamma_1, \ldots, \Upsilon \Rightarrow \gamma_m$. However, these initial sequents are instances of an initial sequent in $\mathbf{BPC}$ and hence we get $\vdash_{\mathbf{BPC}} \Upsilon \Rightarrow A$.
\item[$(ii)$]
It can be easily derived using induction on the structure of the proof. Suppose the premises of a rule are of the form $\Upsilon_1 \Rightarrow \Xi_1$ and $\Upsilon_2 \Rightarrow \Xi_2$ and the conclusion is $\Upsilon \Rightarrow \Xi$. Then by IH, we get $\mathbf{P} \vdash \bigwedge \Upsilon_1 \to \bigvee \Xi_1$ and $\mathbf{P} \vdash \bigwedge \Upsilon_2 \to \bigvee \Xi_2$. Using the corresponding rule in $\mathbf{P}$ we get $\mathbf{P} \vdash \bigwedge \Upsilon \to \bigvee \Xi$.
\item[$(iii)$]
This can be derived using the rule $(adj)$ in $\mathbf{P}$ for $n-1$ times.
\end{itemize}
Then, using these facts, we get $\phi_1, \ldots, \phi_n \vdash_{\mathbf{P}} \bigwedge_{i=1}^n \phi_i$ and $\vdash_\mathbf{P} \bigwedge_{i=1}^n \phi_i \to \phi$. Using the rule $(mp)$ we get $\phi_1, \ldots, \phi_n \vdash_{\mathbf{P}} \phi$.\\ 
Finally, we have to show that each rule in $\mathbf{P}$ is $\mathsf{IPC}$-standard, which is a straightforward task. As an example, let us investigate the following rule of $\mathbf{P}$, which corresponds to the rule $(F \vee)$ in $\mathbf{BPC}$:
\begin{center}
 \begin{tabular}{c}
  \AxiomC{$\gamma \to (\phi \to \theta)$}
   \AxiomC{$\gamma \to (\psi \to \theta)$}
 \BinaryInfC{$\gamma \to (\delta \vee (\phi \vee \psi \to \theta))$}
 \DisplayProof
\end{tabular}
\end{center}
It is easy to see that 
\[
\Rightarrow \gamma \to (\phi \to \theta), \Rightarrow \gamma \to (\psi \to \theta) \vdash_{\mathbf{LJ}} \Rightarrow \gamma \to (\delta \vee (\phi \vee \psi \to \theta))
\]
which implies $\gamma \to (\phi \to \theta), \gamma \to (\psi \to \theta) \vdash_{\mathsf{IPC}} \gamma \to (\delta \vee (\phi \vee \psi \to \theta))$, i.e., the above rule is $\mathsf{IPC}$-standard. Similarly, all the other rules of $\mathbf{P}$ are $\mathsf{IPC}$-standard.\\
So far, we proved that $\mathbf{P}$ is a Frege system for $\mathsf{BPC}$ wrt $\mathsf{IPC}$. For the number of proof-lines, let $\pi= T_1, \ldots, T_n=(\Upsilon \Rightarrow \Xi)$ be a proof in $\mathbf{BPC}$, written in a way such that each $T_i$ is either an axiom or it is the conclusion of a rule with premises among the $T_j$'s where $j < i$. Then, it is enough to take $\pi'$ as $I(T_1), \ldots, I(T_n)=I(\Upsilon \Rightarrow \Xi)$. The new proof $\pi'$ is a proof in $\mathbf{P}$, since $\mathbf{P}$ is defined by imitating the rules and axioms of $\mathbf{BPC}$. It is also clear that $\lambda(\pi')$ is equal to the number of the sequents in $\pi$.
\end{proof}

\begin{dfn} \label{DfnEF}
An \emph{extended Frege} system for a substructural logic $\mathsf{L}$ is a Frege system for $\mathsf{L}$ together with the extension axiom, which allows formulas of the form $p \equiv \phi := (p \setminus \phi \wedge \phi \setminus p)$ to be added to a derivation with the following conditions: $p$ is a new variable not occurring in $\phi$, in any lines before $p \equiv \phi$, or in any hypotheses to the derivation. It can however appear in later lines, but not in the last line. An extended Frege system for a super-basic logic $\mathsf{L}$ wrt $\mathsf{M}$ is defined similarly, where $\mathsf{L} \subseteq \mathsf{M}$, by adding the extension axiom $p \equiv \phi := (p \to \phi \wedge \phi \to p)$ to a Frege system for $\mathsf{L}$ wrt $\mathsf{M}$.
\end{dfn}

It is easy to check that the definition of equivalence introduced in Definition \ref{DfnEF} is closed under substitution, i.e., for a substructural or a super-basic logic $\mathsf{L}$, if $A \equiv B$ holds in $\mathsf{L}$ then for any formula $\phi(p,\bar{q})$ we have $\phi(A, \bar{q}) \equiv \phi(B, \bar{q})$ in $\mathsf{L}$. 

\begin{lem} \label{LemEquivalenceOfEveryFrege}
For any two Frege systems $\mathbf{P}$ and $\mathbf{Q}$ for a substructural logic $\mathsf{L}$, there exists a number $c$ such that for any formula $\phi$ and any proof $\pi$, there exists a proof $\pi'$ such that
\[
\mathbf{P}\vdash^{\pi} \phi \;\;\; \textit{implies} \;\;\; \mathbf{Q} \vdash^{\pi'} \phi
\]
and $\lambda(\pi') \leq c \lambda (\pi)$. 
In the case that $\mathbf{P}$ and $\mathbf{Q}$ are extended Frege systems, they are polynomially equivalent.
\end{lem}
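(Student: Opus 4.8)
The plan is to prove the statement by the standard ``rule-by-rule'' simulation, exploiting that each Frege system has only finitely many rules and that every rule of one system is derivable, with a fixed number of lines, in the other.

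First I would fix the finitely many rules $R_1,\dots,R_k$ of $\mathbf{P}$. Writing $R_j$ schematically as $\frac{\alpha^j_1 \cdots \alpha^j_{m_j}}{\beta^j}$, the $\mathsf{L}$-standardness required by condition $(4)$ of Definition~\ref{DfnFrege} (recall that for substructural logics we take $\mathsf{M}=\mathsf{L}$) gives $\alpha^j_1,\dots,\alpha^j_{m_j}\vdash_{\mathsf{L}}\beta^j$. Strong completeness of $\mathbf{Q}$ (condition $(3)$) then yields a $\mathbf{Q}$-derivation $D_j$ of $\beta^j$ from the hypotheses $\alpha^j_1,\dots,\alpha^j_{m_j}$. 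Since there are only finitely many rules, I can set $c:=\max_{1\le j\le k}\lambda(D_j)$; this constant depends only on $\mathbf{P}$ and $\mathbf{Q}$, not on the formula or the proof to be simulated.

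Next I would splice these fixed derivations into a given $\mathbf{P}$-proof $\pi=\psi_1,\dots,\psi_n=\phi$, processing its lines in order. If $\psi_i$ is obtained by the substitution instance $\sigma_i$ of a rule $R_j$, with premises among the earlier $\psi$'s, I append the derivation $\sigma_i(D_j)$: because $\mathbf{Q}$-proofs are closed under substitution instances of rules (Definition~\ref{dfnInferenceSystem}), $\sigma_i(D_j)$ is a legitimate $\mathbf{Q}$-derivation of $\psi_i=\sigma_i(\beta^j)$ from $\sigma_i(\alpha^j_1),\dots,\sigma_i(\alpha^j_{m_j})$, and the latter are exactly the earlier lines of $\pi$ already present in the proof under construction. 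Thus each line of $\pi$ is replaced by at most $c$ lines of $\mathbf{Q}$, giving $\lambda(\pi')\le c\,\lambda(\pi)$. I expect the only delicate point here to be the bookkeeping: verifying that the hypothesis-leaves of each inlined $D_j$ coincide with already-derived lines so that no rederivation is needed, and that the resulting sequence satisfies the line-by-line justification requirement of an inference system. The reverse simulation is identical with the roles of $\mathbf{P}$ and $\mathbf{Q}$ exchanged.

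Finally, for extended Frege systems I would run the same splicing on the Frege rules and treat each application of the extension axiom in $\mathbf{P}$ separately. Since the extension axiom has the \emph{same} syntactic form $p\equiv\phi$ in both systems (Definition~\ref{DfnEF}), such a line can be copied verbatim as an extension axiom of $\mathbf{Q}$; crucially, the splicing of the Frege rules introduces no new propositional variables (every substituted formula is a subformula of a line of $\pi$), so the freshness side-conditions on the extension variable $p$ are preserved, and this is the step I would check most carefully. This again gives a constant-factor bound on the number of lines, and hence a polynomial bound on length: each spliced line is a substitution instance of one of finitely many fixed schemata, so its size is linear in the sizes of the substituted subformulas, whence $|\pi'|$ is bounded by a polynomial in $|\pi|$. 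By symmetry this yields mutual polynomial simulation, i.e.\ polynomial equivalence.
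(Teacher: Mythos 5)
Your proposal is correct and follows essentially the same route as the paper, which invokes the Cook--Reckhow rule-by-rule simulation: each instance of a rule of $\mathbf{P}$ is replaced by a fixed $\mathbf{Q}$-derivation of its conclusion from its premises (obtained from the $\mathsf{L}$-standardness of $\mathbf{P}$'s rules together with the strong completeness of $\mathbf{Q}$), and $c$ is taken as the maximum number of lines of these finitely many derivations. Your write-up is in fact more detailed than the paper's very terse proof, spelling out the substitution-closure bookkeeping and the freshness of extension variables, which the paper leaves implicit.
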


\begin{proof}
The proof is easy and originally shown in \cite{Cook}. Any instance of a rule in $\mathbf{P}$ can be replaced by its proof in $\mathbf{Q}$, which has a fixed number of lines. Take $c$ as the largest number of proof-lines of these proofs. Since there are finite many rules in $\mathbf{P}$, finding $c$ is possible. Therefore, $\lambda(\pi') \leq c \lambda (\pi)$. A similar argument also works for the lengths of the proofs when $\mathbf{P}$ and $\mathbf{Q}$ are extended Frege systems.
\end{proof}

As a result of Lemma \ref{LemEquivalenceOfEveryFrege}, since we are concerned with the number of proof-lines and lengths of proofs, we can talk about ``\emph{the}"  Frege (extended Frege) system for the substructural logic $\mathsf{L}$ and denote it by $\mathsf{L}-\mathbf{F}$ ($\mathsf{L}-\mathbf{EF}$). Note that Lemma \ref{LemEquivalenceOfEveryFrege} cannot be proved for any two Frege systems for $\mathsf{L}$ wrt $\mathsf{M}$ for super-basic logics $\mathsf{L} \subseteq \mathsf{M}$. For this to hold, we need an $\mathsf{L}-\mathbf{F}$ system wrt $\mathsf{M}$ to be strongly sound, i.e., if $\phi_1, \ldots, \phi_n \vdash_{\mathbf{P}} \phi$ then $\phi_1, \ldots, \phi_n \vdash_{\mathbf{L}} \phi$, which does not necessarily hold.

\begin{dfn}
A proof in a Frege (extended Frege, Hilbert-style, Gentzen-style) system is called \emph{tree-like} if every step of the proof is used at most once as a hypothesis of a rule in the proof. It is called a \emph{general} (or \emph{dag-like}) proof, otherwise.
\end{dfn}

In this paper we will not use this distinction, because throughout the paper all the proofs are considered to be dag-like, which is the more general notion. However, we find it useful to mention that the lower bound results of this paper actually work for any dag-like proof, including all the tree-like ones.

\section{A descent into the substructural world} \label{SectionDescent}
In this section, we will present a sequence of tautologies for the logic $\mathsf{FL} (\mathsf{BPC})$. Then, in Section \ref{SectionLowerBound} we show that they are exponentially hard for any proof system stronger than the sequent calculus $\mathbf{FL} (\mathbf{BPC})$ and polynomially weaker than a super-intuitionistic logic of infinite branching. This includes any extended Frege system for any substructural and super-basic logics weaker than a super-intuitionistic logic of infinite branching. For the super-basic case, the extended Frege system must be wrt the super-intuitionistic logic itself. In order to do so, we first provide some sentences provable in the weak system $\mathbf{WL}$. This uniformly provides two sequences of formulas provable in $\mathbf{FL}_{\bot}$ and $\mathbf{BPC}$. In the case of $\mathbf{FL}_{\bot}$, since the system $\mathbf{FL_{\bot}}$ is conservative over $\mathbf{FL}$ and the formulas we are interested in do not contain $\bot$, we will automatically have a proof in $\mathbf{FL}$.\\

To provide tautologies in $\mathbf{WL}$, we pursue the following strategy: First, using the representation $\{\bot, 1\}$ for true and false, we encode every binary evaluation of an $\mathbf{LK}$-formula by a suitable $\mathbf{WL}$-proof. Then, using this encoding, we map a certain fragment of $\mathbf{LK}$ into the system $\mathbf{WL}$, without any essential change in the original sequent. Finally, applying this map on certain hard intuitionistic tautologies provides the intended hard tautologies that we are looking for. 

\begin{dfn} \label{SubstitutionBotValuation}
Let $v$ be a Boolean valuation assigning truth values $\{t, f\}$ to the propositional variables. For a formula $A$ in the language $\mathcal{L}$, by $v(A)$ we mean the Boolean valuation of $A$ by $v$, defined in the usual way. The substitution $\sigma_v$ for a formula $A$ is defined in the following way: if the valuation $v$ assigns ``$t$" to an atom, substitute $1$ for this atom in $A$ and if $v$ assigns ``$f$" to an atom, then substitute $\bot$ for this atom in $A$. We write $A^{\sigma_v}$ for the formula obtained from this substitution.
\end{dfn}

\begin{lem}\label{EitherOr}
For any formula $A$ constructed from atoms and $\{\wedge, \vee\}$ and for any valuation $v$ we have
\[
\textit{if}\; \; v(A)=t, \; \textit{then}\; \mathbf{WL} \vdash A^{\sigma_v} \Leftrightarrow 1, \]
\[
 \textit{if}\; \; v(A)=f, \; \textit{then} \; \mathbf{WL} \vdash A^{\sigma_v} \Leftrightarrow \bot.
\]
\end{lem}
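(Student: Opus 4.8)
The plan is to argue by structural induction on $A$. Since $A^{\sigma_v}$ is a formula over $\{1,\bot,\wedge,\vee\}$ with no remaining atoms, the induction has a base case (atoms) and the two connective cases $\wedge$ and $\vee$, each split according to whether $v(A)=t$ or $v(A)=f$. For the base case $A=p$: if $v(p)=t$ then $A^{\sigma_v}=1$ and both halves of $A^{\sigma_v}\Leftrightarrow 1$ are the single initial sequent $1\Rightarrow 1$; if $v(p)=f$ then $A^{\sigma_v}=\bot$ and both halves of $A^{\sigma_v}\Leftrightarrow\bot$ are the initial sequent $\bot\Rightarrow\bot$.

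Before handling the connectives I would isolate two observations that the induction hypothesis yields for every subformula $D$. First, $\bot\Rightarrow D^{\sigma_v}$ is always an instance of the initial sequent $\Gamma,\bot,\Sigma\Rightarrow\Delta$, so the ``$\bot\Rightarrow{-}$'' half of every $f$-case comes for free. Second, $\mathbf{WL}\vdash D^{\sigma_v}\Rightarrow 1$ holds in \emph{both} cases: if $v(D)=t$ this is already part of the induction hypothesis, and if $v(D)=f$ the hypothesis gives $D^{\sigma_v}\Rightarrow\bot$, which I would combine with the initial sequent $\bot\Rightarrow 1$ (an instance of $\Gamma,\bot,\Sigma\Rightarrow\Delta$ with $\Delta=1$) via the cut rule to obtain $D^{\sigma_v}\Rightarrow 1$.

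With these in hand the connective cases are mechanical. For $A=B\wedge C$ with $v(A)=t$ (so $v(B)=v(C)=t$), the direction $B^{\sigma_v}\wedge C^{\sigma_v}\Rightarrow 1$ follows from $B^{\sigma_v}\Rightarrow 1$ by $(L\wedge_1)$, while $1\Rightarrow B^{\sigma_v}\wedge C^{\sigma_v}$ follows from $1\Rightarrow B^{\sigma_v}$ and $1\Rightarrow C^{\sigma_v}$ by $(R\wedge)$. When $v(A)=f$, some conjunct is false, say $v(B)=f$; then $B^{\sigma_v}\wedge C^{\sigma_v}\Rightarrow\bot$ follows from $B^{\sigma_v}\Rightarrow\bot$ by $(L\wedge_1)$ (symmetrically by $(L\wedge_2)$ if instead $v(C)=f$), and $\bot\Rightarrow B^{\sigma_v}\wedge C^{\sigma_v}$ is initial. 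For $A=B\vee C$ with $v(A)=f$ (so $v(B)=v(C)=f$), $B^{\sigma_v}\vee C^{\sigma_v}\Rightarrow\bot$ follows from the premises $B^{\sigma_v}\Rightarrow\bot$ and $C^{\sigma_v}\Rightarrow\bot$ by $(L\vee)$, and the reverse is again initial. Finally, for $A=B\vee C$ with $v(A)=t$: say $v(B)=t$, so $1\Rightarrow B^{\sigma_v}\vee C^{\sigma_v}$ follows from $1\Rightarrow B^{\sigma_v}$ by $(R\vee_1)$ (or $(R\vee_2)$ if $v(C)=t$), and the direction $B^{\sigma_v}\vee C^{\sigma_v}\Rightarrow 1$ follows by $(L\vee)$ from the two premises $B^{\sigma_v}\Rightarrow 1$ and $C^{\sigma_v}\Rightarrow 1$.

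The step I expect to be the only genuine obstacle is precisely this last one: $(L\vee)$ forces me to supply $D^{\sigma_v}\Rightarrow 1$ for the \emph{false} disjunct as well, and because $\mathbf{WL}$ has no weakening rule one cannot simply derive $\phi\Rightarrow 1$ for an arbitrary $\phi$. This is exactly what the second observation above is designed to fix, with the initial sequent for $\bot$ together with cut carrying the load that weakening would otherwise carry. Everything else is a direct application of the logical rules, so once that point is set up the induction closes routinely.
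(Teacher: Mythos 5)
Your proposal is correct and takes essentially the same route as the paper's proof: structural induction on $A$, with the decisive point being that $(L\vee)$ needs $D^{\sigma_v}\Rightarrow 1$ for the false disjunct, which you obtain from the induction hypothesis $D^{\sigma_v}\Rightarrow\bot$ together with the initial sequent $\bot\Rightarrow 1$ and the cut rule --- exactly the device the paper uses in its $\vee$ case. Isolating this (and the free sequent $\bot\Rightarrow D^{\sigma_v}$) as standing observations merely streamlines the same case analysis.
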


\begin{proof}
The proof is simple and uses induction on the structure of the formula $A$. If it is an atom, then the claim is clear by the definition of $A^{\sigma_v}$. If $A = B \wedge C$ then if $v(A)=t$, we have $v(B)=v(C)=t$. Therefore, by induction hypothesis we have
\[
\mathbf{WL} \vdash B^{\sigma_v} \Leftrightarrow 1 \;\; \textit{and} \;\;
\mathbf{WL} \vdash C^{\sigma_v} \Leftrightarrow 1
\]
Using the following proof-trees in $\mathbf{WL}$

\begin{center}
  	\begin{tabular}{c c}
	  	\AxiomC{$1 \Rightarrow B^{\sigma_v}$}
	  	\AxiomC{$1 \Rightarrow C^{\sigma_v}$}
	  	\RightLabel{$R \wedge$}
	  	\BinaryInfC{$1  \Rightarrow B^{\sigma_v} \wedge C^{\sigma_v}$}
  		\DisplayProof
  		&
  		\AxiomC{$B^{\sigma_v} \Rightarrow 1$}
	  	\RightLabel{$L \wedge_1$}
	  	\UnaryInfC{$ B^{\sigma_v} \wedge C^{\sigma_v} \Rightarrow 1$}
  		\DisplayProof
\end{tabular}
\end{center}
we obtain $\mathbf{WL} \vdash B^{\sigma_v} \wedge C^{\sigma_v} \Leftrightarrow 1$, which is $\mathbf{WL} \vdash A^{\sigma_v} \Leftrightarrow 1$.\\
If $A = B \wedge C$ and $v(A)=f$, then one of the following happens
\[
v(B)=t, v(C)=f \;\;\; \textit{or} \;\;\;
v(B)=f, v(C)=t \;\;\; \textit{or} \;\;\;
v(B)= v(C)=f 
\]
We investigate the first case, the other cases are similar. If $v(B)=t$ and $v(C)=f$, by induction hypothesis we get 
\[
\mathbf{WL} \vdash B^{\sigma_v} \Leftrightarrow 1 \;\; \textit{and} \;\;
\mathbf{WL} \vdash C^{\sigma_v} \Leftrightarrow \bot
\]
Therefore, the following are provable in $\mathbf{WL}$
\begin{center}
  	\begin{tabular}{c c c}
  		\AxiomC{$C^{\sigma_v} \Rightarrow \bot$}
	  	\RightLabel{$(L \wedge_2)$}
	  	\UnaryInfC{$ B^{\sigma_v} \wedge C^{\sigma_v} \Rightarrow \bot$}
  		\DisplayProof
  		&
  		\hspace{1pt}
  		&
  		\AxiomC{$\bot \Rightarrow B^{\sigma_v} \wedge C^{\sigma_v}$}
  		\DisplayProof
\end{tabular}
\end{center}
where the right sequent is an instance of the axiom for $\bot$. Hence, we get $\mathbf{WL} \vdash A^{\sigma_v} \Leftrightarrow \bot$.\\
Finally, if $A=B \vee C$, based on whether $v(A)=t$ or $v(A)=f$ we proceed as before. All the cases are simple, therefore here we only investigate the case where $v(A)=v(B \vee C)=t$ and $v(B)=f$ and $v(C)=t$, as an example. Using the induction hypothesis for $B$ and $C$, consider the following proof-trees in $\mathbf{WL}$:
\begin{center}
  	\begin{tabular}{c c }
  		\AxiomC{$1 \Rightarrow C^{\sigma_v}$}
	  	\RightLabel{$(R \vee_2)$}
	  	\UnaryInfC{$1 \Rightarrow B^{\sigma_v} \vee C^{\sigma_v}$}
  		\DisplayProof
  		&
  		\AxiomC{$B^{\sigma_v} \Rightarrow \bot$}
  		\AxiomC{$\bot \Rightarrow 1$}
	  	\RightLabel{$(cut)$}
	  	\BinaryInfC{$B^{\sigma_v} \Rightarrow 1$}
	  	\AxiomC{$C^{\sigma_v} \Rightarrow 1$}
	  	\RightLabel{$(L \vee)$}
	  	\BinaryInfC{$B^{\sigma_v} \vee C^{\sigma_v} \Rightarrow 1$}
  		\DisplayProof
\end{tabular}
\end{center}
\end{proof}

The following theorem is our main tool in proving the lower bound and it provides a method to convert classical tautologies to tautologies in $\mathbf{WL}$.

\begin{thm} \label{ClassicalTautFLe} 
Let $I = \{i_1, \ldots, i_k\} \subseteq \{1, \ldots, n\}$ and $A(\bar{p})$ is a formula only consisting of $\bar{p}= p_1, \ldots, p_n$ and connectives $\{\wedge, \vee\}$. If $ \bigwedge_{i_j \in I} p_{i_j} \to A(\bar{p})$ is a classical tautology, then we have 
\[
\mathbf{WL} \vdash \bigast_{j=1}^{k} (p_{i_j} \wedge 1) \Rightarrow A(\bar{p}).
\]
\end{thm}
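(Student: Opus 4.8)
The plan is to reduce the statement to a sequent with a plain sequence of hypotheses and then to prove that reduced sequent by induction on $A$, exploiting that $A$ is built only from $\wedge$ and $\vee$ and is therefore monotone.

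First I would strip the fusion. By repeated applications of $(L*)$, the sequent $\bigast_{j=1}^{k}(p_{i_j}\wedge 1)\Rightarrow A$ is derivable in $\mathbf{WL}$ from
\[
(p_{i_1}\wedge 1),\ldots,(p_{i_k}\wedge 1)\Rightarrow A ,
\]
whose antecedent is the sequence of the $k$ conjunctions; here I use that $(L*)$ contracts two adjacent antecedent formulas into their fusion, and that the chosen association of $\bigast$ is matched up to the provable associativity of $*$. So it suffices to derive this flat sequent. Next, let $v$ be the Boolean valuation with $v(p_i)=t$ exactly when $i\in I$. Since $\bigwedge_{i_j\in I}p_{i_j}\to A$ is a classical tautology and $v$ makes every $p_{i_j}$ with $i_j\in I$ true, we obtain $v(A)=t$.

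The core is the following claim, proved by induction on the structure of $B$: for every subformula $B$ of $A$ with $v(B)=t$, the sequent $(p_{i_1}\wedge 1),\ldots,(p_{i_k}\wedge 1)\Rightarrow B$ is provable in $\mathbf{WL}$. If $B$ is an atom, then $v(B)=t$ forces $B=p_{i_m}$ for some $m$; I start from the initial sequent $p_{i_m}\Rightarrow p_{i_m}$, apply $(L\wedge_1)$ to get $(p_{i_m}\wedge 1)\Rightarrow p_{i_m}$, and then insert each remaining hypothesis $p_{i_r}\wedge 1$ with $r\neq m$ in its correct position. If $B=C\wedge D$, then $v(C)=v(D)=t$; the induction hypothesis yields the two sequents with the same antecedent, and a single application of $(R\wedge)$, which shares its context, produces $B$. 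If $B=C\vee D$, then $v(C)=t$ or $v(D)=t$; I derive the true disjunct by the induction hypothesis and conclude by $(R\vee_1)$ or $(R\vee_2)$. Applying the claim to $B=A$ and reassembling the fusion finishes the argument.

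The main obstacle is that $\mathbf{WL}$ has neither left weakening nor contraction, so a priori one can neither discard an unused hypothesis $p_{i_r}\wedge 1$ (needed in the atom case) nor duplicate a hypothesis required by two conjuncts. Both difficulties are resolved by the gadget $\wedge 1$: to weaken a hypothesis in, I apply $(1w)$ to insert a $1$ at the desired position and then $(L\wedge_2)$ to turn that $1$ into $p_{i_r}\wedge 1$, which gives exactly the admissibility of left weakening for these particular formulas, and also lets me place them so as to respect the order of the antecedent (recall $\mathbf{WL}$ lacks exchange as well). The need for contraction disappears because $(R\wedge)$ is context-sharing and $(R\vee_1),(R\vee_2)$ select a single disjunct, so a monotone formula true at $v$ can be derived from one copy of each hypothesis. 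This is the same phenomenon underlying Lemma~\ref{EitherOr}, on which the inductive step could alternatively be based.
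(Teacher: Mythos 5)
Your proof is correct, and it takes a genuinely different route from the paper's. The paper fixes the same valuation $v$, but then works through the substitution $\sigma_v$ of Definition \ref{SubstitutionBotValuation} (true atoms become $1$, false atoms become $\bot$): it proves, by induction on an \emph{arbitrary} $\{\wedge,\vee\}$-formula $A$, the auxiliary sequent $\bigast_{j=1}^{k}(p_{i_j}\wedge 1), A^{\sigma_v}\Rightarrow A$, in which the antecedent formula $A^{\sigma_v}$ serves as a truth certificate --- subformulas false under $v$ are discharged trivially through the $\bot$ initial sequent --- and then cuts this against $\Rightarrow A^{\sigma_v}$, which is supplied by Lemma \ref{EitherOr}. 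You instead induct only over the subformulas of $A$ that are \emph{true} under $v$, a family closed under the monotone connectives (a true conjunction has two true conjuncts, a true disjunction has a true disjunct), and you derive each such subformula directly from the flat antecedent. Your treatment of the two genuine obstacles coincides with the paper's: $(1w)$ followed by $(L\wedge_2)$ simulates left weakening, at any position, for formulas of the form $p\wedge 1$, and the context-sharing (additive) $(R\wedge)$ removes any need for contraction. What your route buys: it needs neither $\sigma_v$, nor Lemma \ref{EitherOr}, nor the $\bot$ initial sequent, nor the cut rule, so it produces a cut-free derivation lying in the $\bot$-free fragment of $\mathbf{WL}$, which is literally a subsystem of $\mathbf{FL}$; in particular, Corollary \ref{CorThetaFL} could then be obtained without appealing to cut elimination for $\mathbf{FL_{\bot}}$. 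What the paper's route buys: its induction statement is uniform in the truth value of the subformula, and Lemma \ref{EitherOr} implements the encoding of full Boolean evaluation by $\{1,\bot\}$ announced at the start of Section \ref{SectionDescent}, a self-contained technical tool, whereas your argument is tailored (efficiently) to the monotone case at hand.
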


\begin{proof}
First, note that in the theorem, due to the commutativity of conjunction in classical logic, any order on the elements of $I$, i.e., the sequence $i_1, \ldots, i_k$, can be used and $\bigast_{j=1}^{k} (p_{i_j} \wedge 1) \Rightarrow A(\bar{p})$ is provable in $\mathbf{WL}$. However, the order must be fixed throughout the proof. Moreover, we use $A$ and $A(\bar{p})$, interchangeably.
Now, for the proof, since $\bigwedge_{i_j \in I} p_{i_j} \to A(\bar{p})$ is a classical tautology, it will be true under any assignment of truth values to the propositional variables, especially the valuation $v$ assigning truth to every $p_{i_j}$, for $i_j \in I$, and falsity to the rest. It is easy to see that under this valuation we have $v(\bigwedge_{i_j \in I} p_{i_j})=t$ and since we also have $v(\bigwedge_{i_j \in I} p_{i_j} \to A(\bar{p}))=t$ (because the formula is a classical tautology), we get as a result that $v(A)=t$. Therefore, using Lemma \ref{EitherOr} we obtain
$
\mathbf{WL} \vdash A^{\sigma_v} \Leftrightarrow 1
$
and since $\mathbf{WL} \vdash \, \Rightarrow 1$, using the cut rule we get
\[
\mathbf{WL} \vdash \, \Rightarrow A^{\sigma_v} \;\;\;\;\; (\star) 
\]
On the other hand if we show
\[
\mathbf{WL} \vdash \bigast_{j=1}^{k} (p_{i_j} \wedge 1), A^{\sigma_v} \Rightarrow A \;\; (\dagger)
\]
then using the cut rule on the sequents in $(\star)$ and $(\dagger)$ we get
\[
\mathbf{WL} \vdash \bigast_{j=1}^{k} (p_{i_j} \wedge 1) \Rightarrow A.
\]

We will prove $(\dagger)$ by induction on the structure of the formula $A$. If $A$ is equal to $p_{i_j}$, for some $j$ where $i_j \in I$, then since $v(p_{i_j})=t$, we have $A^{\sigma_v}= p_{i_j}^{\sigma_v}= 1$. Therefore, the following proof-tree provides a proof in $\mathbf{WL}$:
\begin{center}
  	\begin{tabular}{c}
	  	\AxiomC{$p_{i_j} \Rightarrow p_{i_j}$}
	  	\RightLabel{$(L \wedge_1)$}
	  	\UnaryInfC{$p_{i_j} \wedge 1 \Rightarrow p_{i_j}$}
	  	\RightLabel{$(1 w)$}
	  	\UnaryInfC{$p_{i_j} \wedge 1, 1 \Rightarrow p_{i_j}$}
	  	\RightLabel{$(1 w)$}
	  	\UnaryInfC{$1, p_{i_j} \wedge 1, 1 \Rightarrow p_{i_j}$}
	  	\RightLabel{$(L \wedge_2)$}
	  	\UnaryInfC{$p_{i_{j-1}} \wedge 1, p_{i_j} \wedge 1, 1 \Rightarrow p_{i_j}$} 
	  	\UnaryInfC{$\strut \vdots$} 
	  	 \UnaryInfC{$p_{i_1} \wedge 1, \ldots, p_{i_{j-1}} \wedge 1, p_{i_j} \wedge 1, \ldots, p_{i_k} \wedge 1, 1 \Rightarrow p_{i_j}$}
	  	 \RightLabel{$(L *)$}
	  	 \UnaryInfC{$(p_{i_1} \wedge 1) * (p_{i_2} \wedge 1), \ldots, p_{i_{j-1}} \wedge 1, p_{i_j} \wedge 1, \ldots, p_{i_k} \wedge 1, 1 \Rightarrow p_{i_j}$}
	  	\UnaryInfC{$\strut \vdots$} 
	  	 \RightLabel{$(L *)$}
	  	\UnaryInfC{$ \bigast_{j=1}^{k} (p_{i_j} \wedge 1) , 1 \Rightarrow p_{i_j}$}
  		\DisplayProof
\end{tabular}
\end{center}
where the first vertical dots means applying the rules $(1 w)$ and $(L \wedge_2)$ consecutively for $k-2$ many times. Note that based on the rule $(1 w)$, we can add $1$ in any position on the left hand-side of the sequents. Using this fact together with the rule $(L \wedge_2)$, we obtain all formulas in the appropriate order. The second vertical dots represents applications of the rule $(L *)$ consecutively until one reaches the conclusion. Therefore, we have proved 
\[
\mathbf{WL} \vdash \bigast_{j=1}^{k} (p_{i_j} \wedge 1), A^{\sigma_v} \Rightarrow A
\]
for $A=p_{i_j}$ where $i_j \in I$.
The case where $A = p_{i_j}$ where $i_j \notin I$ is easier. Since for such $j$ we have $v(p_{i_j})=f$, using Lemma \ref{EitherOr} we get $ \mathbf{WL} \vdash A^{\sigma_v} \Leftrightarrow \bot$. Using the initial sequent for $\bot$ we have $ \mathbf{WL} \vdash \bigast_{j=1}^{k} (p_{i_j} \wedge 1), \bot \Rightarrow A$.\\

If $A(\bar{p}) = B(\bar{p}) \wedge C(\bar{p})$, by the induction hypothesis for $B(\bar{p})$ and $C(\bar{p})$,
\[
\mathbf{WL} \vdash \bigast_{j=1}^{k} (p_{i_j} \wedge 1), B^{\sigma_v} \Rightarrow B \;\; , \;\;
\mathbf{WL} \vdash \bigast_{j=1}^{k} (p_{i_j} \wedge 1), C^{\sigma_v} \Rightarrow C \;\; (\ddagger).
\]
Then, first using the rule $(L \wedge_1)$ for the left sequent and rule $(L \wedge_2)$ for the right sequent, and then using the rule $(R \wedge)$ we get 
\[
\mathbf{WL} \vdash \bigast_{j=1}^{k} (p_{i_j} \wedge 1), B^{\sigma_v} \wedge C^{\sigma_v} \Rightarrow B \wedge C.
\]
If $A(\bar{p}) = B(\bar{p}) \vee C(\bar{p})$, then by the induction hypothesis we have $(\ddagger)$. Then, first using the rule $(R \vee_1)$ for the left sequent in $(\ddagger)$ and the rule $(R \vee_2)$ for the right sequent in $(\ddagger)$, and then using $(L \vee)$ we get
\[
\mathbf{WL} \vdash \bigast_{j=1}^{k} (p_{i_j} \wedge 1), B^{\sigma_v} \vee C^{\sigma_v} \Rightarrow B \vee C.
\]
\end{proof}

\subsection{A brief digression into hard tautologies} \label{SubsectionDigression}
The formulas we are going to introduce as our hard tautologies for the system $\mathsf{FL}-\mathbf{EF}$ and any $\mathsf{BPC}-\mathbf{EF}$ wrt a super-intuitionistic logic of infinite branching $\mathsf{M}$ are inspired by the hard formulas for $\mathsf{IPC}-\mathbf{F}$ introduced by Hrube\v{s} \cite{Hrubes} and their negation-free version introduced by Je\v{r}\'{a}bek \cite{Jerabek}. In this subsection, we present these formulas and what combinatorial facts they represent.\\

Let us first define formulas $Clique_{n,k}$ and $Color_{n,m}$ which will be used in Hrube\v{s}'s formulas.

\begin{dfn} \cite[Section 13.5]{KrajicekProof}
Let $n, k, m \geq 1$. By an undirected simple graph on $[n]$ we mean the set of strings of length $\binom{n}{2}$, representing the edge data of a graph over $n$ vertices. We will use the string and the graph it represents, interchangeably. We say a graph has a clique of size at least $k$, when it has a complete subgraph of size $k$, i.e., a subgraph containing all possible edges among its vertices. Define $Clique_{n,k}$ to be the set of undirected simple graphs on $[n]$ that have a clique of size at least $k$. Define $Color_{n,m}$ to be the set of garphs on $[n]$ that are $m$-colorable, i.e., it is possible to color its vertices by $m$ colors such that no connected vertices have the same color. The formulas representing the sets $Clique_{n,k}$ and $Color_{n,m}$ are defined as follows.\\
Consider the following set of clauses using $\binom{n}{2}$ atoms $p_{ij}$, $\{i,j\} \in \binom{[n]}{2}$, one for each potential edge in a graph on $[n]$, and $k.n$ atoms $q_{ui}$ intended to describe a mapping from $[k]$ to $[n]$:
\begin{itemize}
\item[$\bullet$] 
$\bigvee_{i \in [n]} q_{ui}$, for all $u \leq k$,
\item[$\bullet$] 
$\neg q_{ui} \vee \neg q_{uj}$, for all $u \in [k]$ and any $i \neq j \in [n]$,
\item[$\bullet$] 
$\neg q_{ui} \vee \neg q_{vi}$, for all $u \neq v \in [k]$ and any $i \in [n]$,
\item[$\bullet$] 
$\neg q_{ui} \vee \neg q_{vj} \vee p_{ij}$, for all $u \neq v \in [k]$ and any $\{i,j\} \in \binom{[n]}{2}$.
\end{itemize}
Define $Clique^k_n(\bar{p}, \bar{q})$ as the conjunction of all of these formulas. Now, consider the following set of clauses using atoms $\bar{p}$ and $n.m$ more atoms $r_{ia}$ where $i \in [n]$ and $a \in [m]$, intended to describe an $m$-coloring of the graph as a function from $[n]$ to $[m]$:
\begin{itemize}
\item[$\bullet$] 
$\bigvee_{a \in [m]} r_{ia}$, for all $i \in [n]$,
\item[$\bullet$] 
$\neg r_{ia} \vee \neg r_{ib}$, for all $a \neq b \in [m]$ and any $i \in [n]$,
\item[$\bullet$] 
$\neg r_{ia} \vee \neg r_{ja} \vee \neg p_{ij}$, for all $a \in [m]$ and any $\{i,j\} \in \binom{[n]}{2}$.
\end{itemize}
Define $Color_{n}^m(\bar{p}, \bar{r})$ as the conjunction of all of these formulas.
Note that every occurrence of atoms $p_{ij}$ in $Clique^k_n(\bar{p}, \bar{q})$ is positive, or in other words it is monotone in $\bar{p}$.
\end{dfn}

The exponential lower bound for intuitionistic logic is demonstrated in the following theorem due to P. Hrube\v{s}. The main idea is that any short proof for the hard tautology provides a small monotone circuit to decide whether a given graph is not $k$-colorable or it does not have a $(k+1)$-clique, which we know is a hard problem to decide \cite{Alon}.

\begin{thm}\label{Hrubes} \cite{Hrubes}
Let $\bar{p}=p_1, \cdots, p_n$ and $\bar{q}=q_1, \cdots, q_n$ and $\bar{p}, \bar{q}, \bar{r}, \bar{s}$ be disjoint variables, 
and $k= \lfloor \sqrt{n} \rfloor$. Then the formulas
\[
\Theta^{\bot}_n:= \bigwedge_{i=1, \cdots, n} (p_i \vee q_i) \to \neg Color^{k}_n(\bar{p}, \bar{s}) \vee \neg Clique^{k+1}_n (\neg \bar{q}, \bar{r})
\]
are intuitionistic tautologies. Moreover, every $\mathsf{IPC}-\mathbf{F}$-proof of $\Theta^{ \bot}_n$ contains at least $2^{\Omega(n^{1/4})}$ proof-lines.
\end{thm}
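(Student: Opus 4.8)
The plan is to prove the two assertions separately: that $\Theta^{\bot}_n$ is intuitionistically valid, and that every $\mathsf{IPC}-\mathbf{F}$-proof of it has at least $2^{\Omega(n^{1/4})}$ lines. For validity I would first read the formula combinatorially. Interpret the edge variables $\bar{p}$ as the edge-indicators of a graph $G$ and the literals $\neg\bar{q}$ as the edge-indicators of a graph $H$. The antecedent $\bigwedge_i (p_i \vee q_i)$ forces $\neg q_i \leq p_i$, that is $H \subseteq G$. If both disjuncts of the consequent failed, then $\bar{s}$ would witness a proper $k$-coloring of $G$ while $\bar{r}$ would witness a $(k+1)$-clique of $H$; since $H \subseteq G$, this clique lies inside $G$, contradicting $k$-colorability, because a $(k+1)$-clique needs $k+1$ colors. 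Hence the formula is a classical tautology. The whole point of Hrube\v{s}'s construction is that the negations are arranged so that this reasoning is available intuitionistically: $Clique^{k+1}_n$ is monotone (positive) in its edge inputs, $\neg\bar{q}$ occurs only positively, and the deduction uses $\neg q_i \to p_i$ together with a monotone substitution, all of which are intuitionistically sound. I would therefore verify validity directly in $\mathsf{IPC}$, either by an $\mathbf{LJ}$-derivation or by checking that no transitive persistent Kripke model refutes $\Theta^{\bot}_n$.

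For the lower bound, the engine is \emph{monotone feasible interpolation} for intuitionistic Frege. The crucial structural feature is that the only variables shared between the ``coloring part'' $Color^{k}_n(\bar{p},\bar{s})$ and the ``clique part'' $Clique^{k+1}_n(\neg\bar{q},\bar{r})$ are the edge variables, and that the consequent is a disjunction $\neg Color^{k}_n \vee \neg Clique^{k+1}_n$. I would prove that from any $\mathsf{IPC}-\mathbf{F}$-proof of $\Theta^{\bot}_n$ with $L$ proof-lines one can extract a monotone Boolean circuit $C$ in the edge variables, of size polynomial in $L$, that outputs $0$ on every $k$-colorable graph and $1$ on every graph containing a $(k+1)$-clique; in other words, $C$ separates the set $Color_{n,k}$ from the set $Clique_{n,k+1}$. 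The extraction exploits the disjunction property of $\mathsf{IPC}$ together with the persistence and monotonicity built into intuitionistic Kripke semantics: tracking, line by line, which disjunct becomes forced as the edge inputs grow yields a monotone computation, and the positivity of $Clique^{k+1}_n$ in $\neg\bar{q}$ is exactly what keeps the resulting circuit monotone rather than merely Boolean.

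It then remains to invoke the monotone circuit lower bound for the clique--coloring separation (Alon--Boppana, Razborov; see \cite{Alon}). Since the $k$-colorable graphs and the graphs with a $(k+1)$-clique are disjoint, any monotone circuit separating them has size $2^{\Omega(\sqrt{k})}$, which for $k = \lfloor \sqrt{n} \rfloor$ is $2^{\Omega(n^{1/4})}$. Combining this with the polynomial bound on circuit size in terms of the number of proof-lines forces $L \geq 2^{\Omega(n^{1/4})}$, as claimed.

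The main obstacle is precisely the monotone feasible interpolation step for $\mathsf{IPC}-\mathbf{F}$: one must show both that an interpolating circuit can be read off a proof with only a polynomial blow-up measured in proof-lines, and, more delicately, that this circuit is genuinely \emph{monotone}. Guaranteeing monotonicity is what ties the construction to the specific positive occurrence pattern of the edge variables in $Clique^{k+1}_n(\neg\bar{q},\bar{r})$ and is the technical heart of the argument; the classical combinatorics and the circuit lower bound are, by contrast, off-the-shelf. I would spend the bulk of the effort making the semantic line-by-line forcing argument precise and confirming it respects the monotone structure.
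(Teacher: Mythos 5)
This theorem is imported from Hrube\v{s}'s paper \cite{Hrubes}: the present paper gives no proof of it, only the one-sentence remark that a short $\mathsf{IPC}-\mathbf{F}$-proof would yield a small monotone circuit separating $k$-colorable graphs from graphs with a $(k+1)$-clique, contradicting the monotone circuit lower bound of \cite{Alon}. Your proposal --- intuitionistic validity of $\Theta^{\bot}_n$, monotone feasible interpolation for $\mathsf{IPC}-\mathbf{F}$ measured in proof-lines, then the Alon--Boppana bound $2^{\Omega(\sqrt{k})} = 2^{\Omega(n^{1/4})}$ for $k=\lfloor\sqrt{n}\rfloor$ --- is precisely that approach, and you correctly locate the technical heart (the monotone interpolation extraction, carried out by Hrube\v{s}, not by this paper) rather than glossing over it.
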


We refer to the formulas $\Theta^{\bot}_n$ as Hrube\v{s}'s formulas. The superscript $\bot$ in $\Theta^{\bot}_n$ stresses that the formulas contain negations.
For our purposes, we need to use a negation-free version of Hrube\v{s}'s formulas.

\begin{dfn} \cite[Definition 6.28]{Jerabek} \label{Jerabek}
For $k \leq n$ define:
\[
\alpha^k_n (\bar{p}, \bar{s}, \bar{s'}):= \bigvee_{i < n} \bigwedge_{l <k} s'_{i, l} \vee \bigvee_{i,j <n} \bigvee_{l <k} (s_{i,l} \wedge s_{j,l} \wedge p_{i,j}),
\]
\[
\beta^k_n (\bar{q}, \bar{r}, \bar{r'}) := \bigvee_{l< k} \bigwedge_{i <n} r'_{i, l} \vee \bigvee_{i,j <n} \bigvee_{l<m <k} (r_{i,l} \wedge r_{j,m} \wedge q_{i,j}).
\]
Define the negation-free version of Hrube\v{s}'s formulas as follows:
\[
\Theta_{n,k} := \bigwedge_{i,j} (p_{i,j} \vee q_{i,j}) \to [(\bigwedge_{i,l}(s_{i,l} \vee s'_{i,l}) \to \alpha^k_n (\bar{p}, \bar{s}, \bar{s'})) \vee (\bigwedge_{i,l}(r_{i,l} \vee r'_{i,l}) \to \beta^{k+1}_n (\bar{q}, \bar{r}, \bar{r'}))].
\]
Notice that $Color^k_n (\bar{p}, \bar{s}) = \neg \alpha^k_n (\bar{p}, \bar{s}, \neg \bar{s})$ and $Clique^k_n (\bar{p}, \bar{r})= \neg \beta^k_n (\neg \bar{p}, \bar{r}, \neg \bar{r})$. Denote $\Theta_{n, \lfloor \sqrt{n} \rfloor}$ by $\Theta_{n}$ for simplicity. The lower bound of Theorem \ref{Hrubes} also applies to $\Theta_n$ \cite{Jerabek}. 
\end{dfn}

What Je\v{r}\'{a}bek did to make Hrube\v{s}'s formulas negation-free was to introduce new propositional variables $s'_{i,l}$ and $r'_{i,l}$ to play the role of $\neg s_{i,l}$ and $\neg r_{i,l}$, respectively. This trick provides some implication-free formulas $\alpha^k_n$ and $\beta^k_n$ in the definition \ref{Jerabek} which also makes the formulas $\Theta_n $ more amenable to the technique that we provided in Section \ref{SectionDescent}. 

\begin{thm}(\cite[Theorem 6.37]{Jerabek}) \label{JerabekAsli}
Let $\mathsf{L}$ be a super-intuitionistic logic of infinite branching. Then the formulas $\Theta_n$ are intuitionistic tautologies and they require $\mathsf{L}-\mathbf{EF}$-proofs of length $2^{n^{\Omega(1)}}$, and $\mathsf{L}-\mathbf{F}$-proofs with at least $2^{n^{\Omega(1)}}$ lines.
\end{thm}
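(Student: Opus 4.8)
The plan is to obtain both lower bounds from a monotone feasible interpolation theorem for super-intuitionistic logics of infinite branching, reusing the circuit-complexity idea behind Theorem \ref{Hrubes} but running it directly inside $\mathsf{L}$ rather than inside $\mathsf{IPC}$. The formula $\Theta_n$ is shaped for exactly this purpose: it is a valid implication of the form $\bigwedge_{i,j}(p_{i,j}\vee q_{i,j})\to(\Phi_{\mathrm{col}}\vee\Phi_{\mathrm{clq}})$, where the colouring disjunct $\Phi_{\mathrm{col}}=(\bigwedge_{i,l}(s_{i,l}\vee s'_{i,l})\to\alpha^k_n)$ and the clique disjunct $\Phi_{\mathrm{clq}}=(\bigwedge_{i,l}(r_{i,l}\vee r'_{i,l})\to\beta^{k+1}_n)$ share only the edge variables $\bar p,\bar q$, and these occur monotonically (the atoms $p_{i,j}$ occur only positively, as noted after the definition of $Clique^k_n$). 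I would first prove that $\Theta_n$ is an intuitionistic tautology, then extract a monotone Boolean circuit from any short proof, and finally contradict a known monotone circuit lower bound.

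For validity I would reduce to Theorem \ref{Hrubes}. By the identities in Definition \ref{Jerabek}, $Color^k_n(\bar p,\bar s)=\neg\alpha^k_n(\bar p,\bar s,\neg\bar s)$ and $Clique^k_n(\bar p,\bar r)=\neg\beta^k_n(\neg\bar p,\bar r,\neg\bar r)$, so $\Theta_n$ is precisely the negation-free positivisation of Hrube\v{s}'s $\Theta^{\bot}_n$: each negated atom $\neg s_{i,l}$ and $\neg r_{i,l}$ is replaced by a fresh variable $s'_{i,l}$, $r'_{i,l}$, guarded by a premise $s_{i,l}\vee s'_{i,l}$, $r_{i,l}\vee r'_{i,l}$ moved into the antecedent of the relevant implication. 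I would then prove the general lemma that this positivisation carries intuitionistic tautologies to intuitionistic tautologies; the guard $x\vee x'$ supplies exactly the case split on $x$ that a constructive proof of $\Theta^{\bot}_n$ needs, while freshness of $x'$ keeps the occurrence positive. Applying the lemma to the valid $\Theta^{\bot}_n$ gives $\Theta_n\in\mathsf{IPC}\subseteq\mathsf{L}$.

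The heart of the argument, and the step I expect to be the main obstacle, is monotone feasible interpolation for $\mathsf{L}$. This is where the hypothesis is used and where Theorem \ref{ThmInfiniteBranching} becomes essential: infinite branching means $\mathsf{L}\subseteq\mathsf{BD_2}$ or $\mathsf{L}\subseteq\mathsf{KC+BD_3}$, and the Kripke frames of such logics retain enough depth (two incomparable successors that may be amalgamated, or a reflexive terminal node) to run the countermodel surgery on which feasible interpolation rests. Concretely, from an $\mathsf{L}-\mathbf{EF}$ proof of $\Theta_n$ of length $N$ --- respectively an $\mathsf{L}-\mathbf{F}$ proof with $N$ lines --- I would construct, by induction along the proof lines and by pasting the two one-point Kripke extensions determined by an input graph, a monotone Boolean circuit of size polynomial in $N$ computing a Craig interpolant in the shared edge variables. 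Maintaining monotonicity throughout (which rests on the positive occurrence of $\bar p$) and checking that finite branching is the sole obstruction --- exactly the dichotomy of Theorem \ref{ThmInfiniteBranching} --- are the delicate points.

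I would then close with the monotone circuit lower bound. With $k=\lfloor\sqrt n\rfloor$ no graph is at once $k$-colourable and a carrier of a $(k+1)$-clique, so the interpolant is a monotone function that outputs $1$ on every graph containing a $(k+1)$-clique and $0$ on every $k$-colourable graph; by the Alon--Boppana bound for this clique/colouring separation \cite{Alon}, any such monotone circuit has size $2^{n^{\Omega(1)}}$. Confronting this with the polynomial bound from the interpolation step forces $N\ge 2^{n^{\Omega(1)}}$, which is the claimed lower bound on the length of $\mathsf{L}-\mathbf{EF}$-proofs and on the number of lines of $\mathsf{L}-\mathbf{F}$-proofs of $\Theta_n$.
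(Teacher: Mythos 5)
The paper itself gives no proof of this statement: it is imported verbatim from Je\v{r}\'{a}bek \cite[Theorem 6.37]{Jerabek}, so your proposal has to be measured against the proof in that source. Your three-stage architecture --- intuitionistic validity of $\Theta_n$; a monotone feasible-interpolation argument for $\mathsf{L}-\mathbf{EF}$ in which the dichotomy of Theorem \ref{ThmInfiniteBranching} controls the Kripke frames; and the Alon--Boppana monotone circuit bound \cite{Alon} for the clique/colouring separation at $k=\lfloor\sqrt{n}\rfloor$ --- is exactly the architecture of Je\v{r}\'{a}bek's proof, which in turn refines Hrube\v{s}'s argument behind Theorem \ref{Hrubes}. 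The interpolation stage, which you rightly call the heart of the matter, is left at the level of intentions (``countermodel surgery'', ``pasting one-point extensions''); that is acceptable in a sketch, with one caution: an arbitrary $\mathsf{L}$ of infinite branching need not be Kripke complete, so the inclusions $\mathsf{L}\subseteq\mathsf{BD_2}$ or $\mathsf{L}\subseteq\mathsf{KC+BD_3}$ must be used only for \emph{soundness} of the $\mathsf{L}$-rules over the finite-depth frames on which the circuit-extraction induction runs (equivalently, one first simulates $\mathsf{L}-\mathbf{EF}$ line by line inside $\mathsf{BD_2}-\mathbf{EF}$ or $(\mathsf{KC+BD_3})-\mathbf{EF}$ and proves interpolation only for those two logics, which is how \cite{Jerabek} organizes it).

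The genuine gap is in your validity step. The ``general positivisation lemma'' you propose is false: $(x\wedge\neg x)\to z$ is an intuitionistic tautology, yet its guarded positivisation $(x\vee x')\to((x\wedge x')\to z)$ is not even classically valid (take $x=x'$ true and $z$ false). The guard $x\vee x'$ supplies the case split ``$x$ or $x'$'', not ``$x$ or $\neg x$'': in the $x'$-branch nothing negative about $x$ is known, and nothing prevents $x$ and $x'$ from holding together, whereas $x$ and $\neg x$ cannot. Relatedly, $\Theta_n$ is \emph{not} the atom-level positivisation of $\Theta^{\bot}_n$: by the identities in Definition \ref{Jerabek}, the disjunct $\neg Color^{k}_n(\bar{p},\bar{s})$ of $\Theta^{\bot}_n$ equals $\neg\neg\alpha^k_n(\bar{p},\bar{s},\neg\bar{s})$, so passing to the disjunct $\bigwedge_{i,l}(s_{i,l}\vee s'_{i,l})\to\alpha^k_n(\bar{p},\bar{s},\bar{s'})$ of $\Theta_n$ also eliminates a double negation (i.e.\ replaces $\neg Color$ by its De Morgan dual), precisely the kind of step that is not intuitionistically sound in general. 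Hence $\Theta_n\in\mathsf{IPC}$ cannot be outsourced to $\Theta^{\bot}_n\in\mathsf{IPC}$ through a syntactic transformation; it needs its own short direct proof, e.g.\ the Kripke argument: if the succedent fails at a root forcing $\bigwedge_{i,j}(p_{i,j}\vee q_{i,j})$, pick $w_1$ refuting the colouring implication and $w_2$ refuting the clique implication; since each $p_{i,j}\vee q_{i,j}$ is decided already at the root, the $(k{+}1)$-clique read off at $w_2$ consists of $p$-edges forced at the root, hence at $w_1$, where the guards and the failure of $\alpha^k_n$ yield a proper $k$-colouring of those edges, and the pigeonhole principle gives the contradiction. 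This direct lemma (essentially Je\v{r}\'{a}bek's) is what must replace your false general one; with that repair, the rest of your plan stands as a faithful reconstruction of the cited proof.
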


\subsection{Weak hard tautologies}
The following lemmas are easy observations. The first one states that fusion distributes over disjunction in substructural logics. The second one presents a property of the sequent calculus $\mathbf{LK}$.

\begin{lem} \label{Distributivity}
In the sequent calculus $\mathbf{WL}$ we have the following:
\[
\mathbf{WL} \vdash \bigast_{i=1}^n (A_i \vee B_i) \Leftrightarrow \bigvee_{I} (\bigast_{i=1}^n D_{i}^{I})
\]
where $I \subseteq \{1, 2, \cdots, n\}$ and $D_{i}^{I} =\left\{
                \begin{array}{ll}
                 A_i  \;\; , \;\; i \in I\\
                  B_i \;\; , \;\; i \notin I
                \end{array}
              \right.$.
\end{lem}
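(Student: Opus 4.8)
The plan is to reduce the multi-ary statement to the single-step distributivity of fusion over disjunction and then argue by induction on $n$. Concretely, I would first isolate the two one-step laws
\[
\mathbf{WL} \vdash X * (A \vee B) \Leftrightarrow (X * A) \vee (X * B) \quad\text{and}\quad \mathbf{WL} \vdash (A \vee B) * X \Leftrightarrow (A * X) \vee (B * X),
\]
valid for arbitrary formulas $X, A, B$, and then assemble the general equivalence from them. Since $\mathbf{WL}$ has the cut rule and the full fusion rules $(L*), (R*)$, I would also record at the outset the congruence property that provable equivalence is preserved under a fusion context: from $\mathbf{WL} \vdash \phi \Leftrightarrow \psi$ one obtains $\mathbf{WL} \vdash \phi * X \Leftrightarrow \psi * X$ and $\mathbf{WL} \vdash X * \phi \Leftrightarrow X * \psi$ by a single application of $(L*)$ and $(R*)$ together with the hypothesis.

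For the single-step laws, each direction is a two- or three-line derivation. For $X*(A\vee B) \Rightarrow (X*A)\vee(X*B)$, I would apply $(L*)$ to pass to $X, A\vee B \Rightarrow (X*A)\vee(X*B)$, then $(L\vee)$ to split into the two premises $X, A \Rightarrow (X*A)\vee(X*B)$ and $X, B \Rightarrow (X*A)\vee(X*B)$; each premise follows from the identity axioms by $(R*)$ followed by $(R\vee_1)$ or $(R\vee_2)$. The converse $(X*A)\vee(X*B) \Rightarrow X*(A\vee B)$ is handled symmetrically by $(L\vee)$ at the root, reducing the disjunct $X*A \Rightarrow X*(A\vee B)$ (and likewise for $B$) via $(L*)$ and then $(R*)$, using $A \Rightarrow A\vee B$ from $(R\vee_1)$. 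The left-sided law is verbatim the same modulo swapping the roles of the two factors, which is legitimate because $(L\vee)$ and the fusion rules do not depend on commutativity.

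With these in hand I would induct on $n$. The base case $n=1$ is an instance of reflexivity (up to the $\mathbf{WL}$-provable commutativity of $\vee$). For the inductive step, writing $P_n = \bigast_{i=1}^n (A_i \vee B_i)$ and $Q_n = \bigvee_{I \subseteq [n]} \bigast_{i=1}^n D_i^I$, I would split the index sets $I \subseteq [n+1]$ according to whether $n+1 \in I$, which factors
\[
Q_{n+1} \;\Leftrightarrow\; \bigvee_{J \subseteq [n]}\bigl((\bigast_{i=1}^n D_i^J) * A_{n+1}\bigr) \;\vee\; \bigvee_{J \subseteq [n]}\bigl((\bigast_{i=1}^n D_i^J) * B_{n+1}\bigr).
\]
By the iterated left-sided distributivity this is provably equivalent to $(Q_n * A_{n+1}) \vee (Q_n * B_{n+1})$, which by the right-sided one-step law equals $Q_n * (A_{n+1} \vee B_{n+1})$ up to $\mathbf{WL}$-provable equivalence; finally the induction hypothesis $Q_n \Leftrightarrow P_n$ together with the fusion-congruence gives $Q_n * (A_{n+1}\vee B_{n+1}) \Leftrightarrow P_n * (A_{n+1}\vee B_{n+1}) = P_{n+1}$.

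The only genuinely delicate points, rather than true obstacles, are bookkeeping ones: one must fix a parenthesization of the iterated fusion and silently use associativity of $*$ (provable already in $\mathbf{FL}$, hence in $\mathbf{WL}$) to peel off the last factor, read the disjunction $\bigvee_I$ up to the provable associativity and commutativity of $\vee$, and make the passage from the subset-indexed disjunction $Q_{n+1}$ to $(Q_n*A_{n+1})\vee(Q_n*B_{n+1})$ precise by iterating the one-step laws. No step requires exchange, contraction, or the division rules $(L/),(R/),(L\setminus)$ that $\mathbf{WL}$ lacks, so the entire derivation stays inside $\mathbf{WL}$.
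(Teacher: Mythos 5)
Your proof is correct and follows essentially the same route as the paper, which simply states that the lemma ``is easy and uses induction on $n$'' (noting, as you do, that the factors in each disjunct must stay in increasing order since $\mathbf{WL}$ lacks exchange). Your write-up merely fills in the details of that induction—the one-step distributivity laws, fusion-congruence, and the split of subsets of $[n+1]$ by membership of $n+1$—all of which are sound derivations within $\mathbf{WL}$.
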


\begin{proof}
The proof is easy and uses induction on $n$. Note that in each disjunct in the right hand-side, $D_i^{I}$ is either $A_i$ or $B_i$, according to the subset $I$. However, the order of the subscripts must be increasing. For instance, for the case $n=2$ we have 
\[
\mathbf{WL} \vdash (A_1 \vee B_1) * (A_2 \vee B_2) \Leftrightarrow (A_1 * B_2) \vee (A_1 * A_2) \vee (B_1 * A_2) \vee (B_1 * B_2).
\]
\end{proof}

\begin{lem} \label{LemmaMVP}
Suppose $\alpha_1 \to \alpha_2$ and $\beta_1 \to \beta_2$ have no propositional variables in common.
If the formula $\alpha_1 \wedge \beta_1 \to \alpha_2 \vee \beta_2$ is provable in $\mathbf{LK}$, then either $\alpha_1 \to \alpha_2$ or $\beta_1 \to \beta_2$ is provable in $\mathbf{LK}$.
\end{lem}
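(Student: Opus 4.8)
The plan is to argue semantically, exploiting the fact that $\mathbf{LK}$ is sound and complete for classical propositional logic, so that provability of a formula in $\mathbf{LK}$ is the same as its being a classical tautology. I would then prove the contrapositive: assuming that \emph{neither} $\alpha_1 \to \alpha_2$ \emph{nor} $\beta_1 \to \beta_2$ is a tautology, I build a single valuation that falsifies $\alpha_1 \wedge \beta_1 \to \alpha_2 \vee \beta_2$, contradicting its provability in $\mathbf{LK}$.

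Concretely, since $\alpha_1 \to \alpha_2$ is not a tautology, I fix a valuation $u$ with $u(\alpha_1)=t$ and $u(\alpha_2)=f$; likewise I fix a valuation $w$ with $w(\beta_1)=t$ and $w(\beta_2)=f$. The decisive use of the hypothesis is that $\{\alpha_1,\alpha_2\}$ and $\{\beta_1,\beta_2\}$ share no propositional variable, so I may \emph{glue} $u$ and $w$ into one valuation $v$ that agrees with $u$ on the variables occurring in $\alpha_1,\alpha_2$ and with $w$ on the variables occurring in $\beta_1,\beta_2$ (and is defined arbitrarily elsewhere). Well-definedness of $v$ is exactly the point where disjointness of variables is needed: without it the two prescriptions could conflict.

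It then remains to read off the truth values under $v$. By the standard locality property, that the truth value of a formula depends only on the valuation restricted to the variables actually occurring in it, I get $v(\alpha_1)=u(\alpha_1)=t$, $v(\alpha_2)=u(\alpha_2)=f$, and similarly $v(\beta_1)=t$, $v(\beta_2)=f$. Hence $v(\alpha_1 \wedge \beta_1)=t$ while $v(\alpha_2 \vee \beta_2)=f$, so $v$ makes $\alpha_1 \wedge \beta_1 \to \alpha_2 \vee \beta_2$ false. This contradicts its provability in $\mathbf{LK}$, and completeness forces one of $\alpha_1 \to \alpha_2$, $\beta_1 \to \beta_2$ to be a tautology, hence $\mathbf{LK}$-provable.

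I do not expect a serious obstacle: the whole content is the gluing of the two counterexamples, which is legitimate \emph{precisely because} of the disjoint-variable hypothesis, and the only care required is the locality remark justifying that replacing $u$ and $w$ by $v$ leaves the four relevant truth values unchanged. A purely syntactic alternative would route through a variable-sharing or Craig-interpolation argument, but the semantic proof is shorter and is the one I would write.
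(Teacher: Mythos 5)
Your proof is correct, but it takes a genuinely different route from the paper's. The paper disposes of the lemma in one line via Craig's interpolation theorem: from provability of $\alpha_1 \wedge \beta_1 \to \alpha_2 \vee \beta_2$ one gets provability of $\alpha_1 \wedge \neg\alpha_2 \to (\beta_1 \to \beta_2)$, and since antecedent and consequent share no propositional variables, any interpolant must be variable-free, hence equivalent to $\top$ or to $\bot$; in the first case $\beta_1 \to \beta_2$ is provable, in the second $\alpha_1 \to \alpha_2$ is. Your argument instead proves the contrapositive semantically: you glue a valuation falsifying $\alpha_1 \to \alpha_2$ with one falsifying $\beta_1 \to \beta_2$, which is well-defined exactly because of the disjoint-variable hypothesis, and invoke soundness and completeness of $\mathbf{LK}$. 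This is the standard proof that classical logic is Hallden-complete (note that your target formula is classically equivalent to the variable-disjoint disjunction $(\alpha_1 \to \alpha_2) \vee (\beta_1 \to \beta_2)$), and it is arguably more elementary, since it needs only the soundness and completeness of $\mathbf{LK}$ rather than the interpolation theorem. What the paper's route buys is brevity and a purely proof-theoretic flavor: given interpolation as a black box, no model-theoretic apparatus is needed, and the same template transfers to other logics that enjoy interpolation even when a simple truth-table semantics is unavailable. Both proofs use the disjointness hypothesis at the same pivotal point, you to make the glued valuation well-defined, the paper to force the interpolant to be variable-free.
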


\begin{proof}
It is an easy corollary of Craig's interpolation theorem, applied to the provable formula $\alpha_1 \wedge \neg \alpha_2 \to (\beta_1 \to \beta_2)$. 
\end{proof}

We are now ready to formulate the hard tautologies and prove the lower bound. Note that in the formula $\bigast_{i=1}^{n-1} \bigast_{j=1}^{n-1} A_{i,j}$, the indices first range over $j$ and then over $i$, which will result in the lexicographic order, i.e., it has the following form
\[
A_{1,1} * A_{1,2} * \cdots * A_{1, n-1} * A_{2,1} * \cdots * A_{n-1, n-1}.
\]
For a set of formulas $X$, by $\parallel X \parallel$ we mean the number of elements of the set. 

\begin{thm} \label{HardTautologiesFLe}
The formulas
\[
\Theta^{*}_{n,k}:= [ \bigast_{i=1}^{n-1} \bigast_{j=1}^{n-1} ((p_{i,j} \wedge 1) \vee (q_{i,j} \wedge 1))] \; \; \setminus
\]
\[
[\bigast_{i=1}^{n-1} \bigast_{l=1}^{k-1} ((s_{i,l} \wedge 1) \vee (s'_{i,l} \wedge 1)) \setminus \alpha^k_n (\bar{p}, \bar{s}, \bar{s'})] \; \vee \; [\bigast_{i=1}^{n-1} \bigast_{l=1}^{k-1}((r_{i,l} \wedge 1) \vee (r'_{i,l} \wedge 1)) \setminus \beta^{k+1}_n (\bar{q}, \bar{r}, \bar{r'})]
\]
are provable in $\mathbf{WL}$ for all $1 \leq k \leq n$. 
\end{thm}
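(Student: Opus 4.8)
The plan is to peel off the outer connectives of $\Theta^{*}_{n,k}$ with the sequent rules of $\mathbf{WL}$, distribute the fusions over the disjunctions using Lemma \ref{Distributivity}, and reduce each resulting leaf to an instance of Theorem \ref{ClassicalTautFLe}. Write $P := \bigast_{i,j}((p_{i,j}\wedge 1)\vee(q_{i,j}\wedge 1))$, $S := \bigast_{i,l}((s_{i,l}\wedge 1)\vee(s'_{i,l}\wedge 1))$ and $R := \bigast_{i,l}((r_{i,l}\wedge 1)\vee(r'_{i,l}\wedge 1))$, so that $\Theta^{*}_{n,k} = P\setminus[(S\setminus\alpha^k_n)\vee(R\setminus\beta^{k+1}_n)]$. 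Applying $(R\setminus)$ once, it suffices to prove in $\mathbf{WL}$ the sequent $P\Rightarrow(S\setminus\alpha^k_n)\vee(R\setminus\beta^{k+1}_n)$. By Lemma \ref{Distributivity}, $P$ is $\mathbf{WL}$-equivalent to $\bigvee_{c}\bigast_{i,j}D^c_{i,j}$, where $c$ ranges over all choices of one literal $D^c_{i,j}\in\{p_{i,j}\wedge 1,\,q_{i,j}\wedge 1\}$ per pair; cutting against this equivalence and applying $(L\vee)$ repeatedly, the task reduces to proving, for every such $c$,
\[
\bigast_{i,j}D^c_{i,j}\Rightarrow(S\setminus\alpha^k_n)\vee(R\setminus\beta^{k+1}_n).
\]

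Next I would establish the dichotomy that decides, for each $c$, which disjunct to aim at. Let $v_c$ be the Boolean valuation sending each selected edge-literal to $t$ and every other $p$-/$q$-atom to $f$. Since the clique--colouring principle guarantees that the classical reading of $\Theta_{n,k}$ (Definition \ref{Jerabek}) is a tautology for every $1\le k\le n$, and since $v_c$ satisfies its antecedent $\bigwedge_{i,j}(p_{i,j}\vee q_{i,j})$, the formula
\[
\Phi_c := (\bigwedge_{i,l}(s_{i,l}\vee s'_{i,l})\to\alpha^k_n)\vee(\bigwedge_{i,l}(r_{i,l}\vee r'_{i,l})\to\beta^{k+1}_n),
\]
read with $v_c$ substituted for the edge-atoms, is a classical tautology in the remaining variables. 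Classically $\Phi_c$ is equivalent to $\bigwedge_{i,l}(s_{i,l}\vee s'_{i,l})\wedge\bigwedge_{i,l}(r_{i,l}\vee r'_{i,l})\to\alpha^k_n\vee\beta^{k+1}_n$, and after the substitution of $v_c$ its two sides involve disjoint variable sets ($s,s'$ against $r,r'$). Hence Lemma \ref{LemmaMVP} yields that at least one of $\bigwedge_{i,l}(s_{i,l}\vee s'_{i,l})\to\alpha^k_n$ and $\bigwedge_{i,l}(r_{i,l}\vee r'_{i,l})\to\beta^{k+1}_n$ is itself a classical tautology under $v_c$.

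In the first case I would use $(R\vee_1)$ and then $(R\setminus)$ to reduce the goal to $S,\bigast_{i,j}D^c_{i,j}\Rightarrow\alpha^k_n$, distribute $S$ by Lemma \ref{Distributivity} and split with $(L\vee)$, leaving for each choice $d$ of literals $E^d_{i,l}\in\{s_{i,l}\wedge 1,\,s'_{i,l}\wedge 1\}$ a sequent $(\bigast_{i,l}E^d_{i,l})*(\bigast_{i,j}D^c_{i,j})\Rightarrow\alpha^k_n$ whose antecedent is a fusion of conjuncts of the shape $(\mathrm{atom}\wedge 1)$ --- exactly the format demanded by Theorem \ref{ClassicalTautFLe}. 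It then remains to check that $\bigwedge(\text{selected atoms})\to\alpha^k_n$ is a classical tautology. The chosen literals satisfy $\bigwedge_{i,l}(s_{i,l}\vee s'_{i,l})$, so the first-case validity forces $\alpha^k_n$ to be true under the valuation that sets every non-selected atom to $f$; and since $\alpha^k_n$ is monotone (all of its atoms occur positively), raising the non-selected atoms to $t$ cannot destroy its truth, so the implication is a genuine tautology over all valuations. Theorem \ref{ClassicalTautFLe} then supplies the $\mathbf{WL}$-proof of each leaf. The second case is entirely symmetric, using $(R\vee_2)$, the monotonicity of $\beta^{k+1}_n$, and $R$ in place of $S$.

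The routine part is the bookkeeping of the nested $(R\setminus)$, $(L\vee)$ and distributivity steps, which merely assemble finitely many leaves per choice $c$. The substantive point --- and the step I expect to require the most care --- is the passage from the classical validity of one branch of $\Phi_c$ (obtained only under the restricted valuation $v_c$ delivered by Lemma \ref{LemmaMVP}) to the genuine classical tautology required by Theorem \ref{ClassicalTautFLe}. This is exactly where the monotonicity of Je\v{r}\'abek's negation-free formulas $\alpha^k_n$ and $\beta^{k+1}_n$ is indispensable.
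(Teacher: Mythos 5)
Your proposal is correct, and its skeleton --- peel off the outer $\setminus$, distribute the antecedent by Lemma \ref{Distributivity}, split with $(L\vee)$, decide each branch with Lemma \ref{LemmaMVP}, and discharge the leaves with Theorem \ref{ClassicalTautFLe} --- matches the paper's proof. Where you genuinely diverge is in how the dichotomy and the leaf tautologies are produced. The paper keeps the edge atoms symbolic: it applies Lemma \ref{LemmaMVP} to the $\mathbf{LK}$-provable sequent with antecedent $\bigwedge_{(i,j)\in M}p_{i,j}\wedge\bigwedge_{(i,j)\in N}q_{i,j}$, splitting along the variable partition $\{p,s,s'\}$ versus $\{q,r,r'\}$, then works purely inside $\mathbf{LK}$ (cut plus distributivity) to reach classical tautologies of the form $\bigwedge_{U}s_{i,l}\wedge\bigwedge_{V}s'_{i,l}\wedge\bigwedge_{M}p_{i,j}\to\alpha^k_n$, and finally pads the missing $q_{i,j}\wedge 1$ factors back in at the $\mathbf{WL}$ level using $(1w)$, $(L\wedge_2)$ and $(L*)$. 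You instead substitute the valuation $v_c$ into the edge atoms first, apply Lemma \ref{LemmaMVP} along the split $\{s,s'\}$ versus $\{r,r'\}$, and then need the semantic step you rightly flag as the crux: monotonicity of $\alpha^k_n$ and $\beta^{k+1}_n$ (pure $\{\wedge,\vee\}$-formulas) to promote ``valid under $v_c$'' to a genuine classical tautology whose antecedent contains all selected atoms --- including the $q$'s, which is exactly what lets you skip the paper's padding step. That monotone-lifting argument is sound (the all-false extension of the selected atoms realizes $v_c$, and raising atoms preserves truth of a monotone formula), and including atoms in the antecedent that do not occur in $\alpha^k_n$ is permitted by Theorem \ref{ClassicalTautFLe}. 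So both routes work: the paper's stays entirely syntactic, never mentioning valuations or monotonicity (these are implicit only in the applicability of Theorem \ref{ClassicalTautFLe}), while yours gets a cleaner endgame, with every leaf already in the form $(\bigast_{i,l}E^d_{i,l})*(\bigast_{i,j}D^c_{i,j})\Rightarrow\alpha^k_n$, at the cost of the monotonicity argument plus the routine associativity and ordering bookkeeping needed to match the single flat fusion produced by Theorem \ref{ClassicalTautFLe}.
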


\begin{proof} 
Let us denote the following formula, which is the right-hand side of $\setminus$, by $A$:
\[
[\bigast_{i=1}^{n-1} \bigast_{l=1}^{k-1} ((s_{i,l} \wedge 1) \vee (s'_{i,l} \wedge 1)) \setminus \alpha^k_n (\bar{p}, \bar{s}, \bar{s'})] \; \vee \; [\bigast_{i=1}^{n-1} \bigast_{l=1}^{k-1}((r_{i,l} \wedge 1) \vee (r'_{i,l} \wedge 1)) \setminus \beta^{k+1}_n (\bar{q}, \bar{r}, \bar{r'})].
\]
First, we show 
\[
\mathbf{WL} \vdash \bigast_{i=1}^{n-1} \bigast_{j=1}^{n-1} Q_{i,j}^{I} \Rightarrow A \;\;\;\;\;\; (\dagger)
\]
where $Q_{i,j}=\left\{
                \begin{array}{ll}
                 p_{i,j} \wedge 1 \;\; , \;\; (i,j) \in I\\
                 q_{i,j} \wedge 1  \;\; , \;\; (i,j) \notin I 
                \end{array}
              \right.$
for any $ I \subseteq \{ (i,j) \mid i,j \in \{1, \cdots, n-1\} \}$.\\
For simplicity from now on, unless specified otherwise, we will delete the ranges of $i$, $j$ and $l$, which are indicated in $\Theta^{*}_{n,k}$.\\
It is easy to see how proving $(\dagger)$ will result in proving the theorem. The reason is the following. Since $(\dagger)$ is provable for any $I \subseteq \{ (i,j) \mid i,j \in \{1, \cdots, n-1\} \}$, using the left disjunction rule for $2^{(n-1)^2}-1$ many times on $(\dagger)$, we get
\[
\mathbf{WL} \vdash \bigvee_{I} \bigast_i \bigast_j Q_{i,j}^{I} \Rightarrow A.
\]
Furthermore, Lemma \ref{Distributivity} allows us to obtain 
\[
\mathbf{WL} \vdash \bigast_{i} \bigast_{j} ((p_{i,j} \wedge 1) \vee (q_{i,j} \wedge 1))
\Rightarrow
\bigvee_{I} \bigast_i \bigast_j Q_{i,j}^{I},
\]
and using the cut rule and the rule $(R \setminus)$, we conclude
\[
\mathbf{WL} \vdash \, \Rightarrow \Theta^{*}_{n,k} .
\] 
On the other hand, $\Theta_{n,k}$ presented in Definition \ref{Jerabek}, is provable in $\mathbf{LJ}$, and therefore also provable in $\mathbf{LK}$. Using the distributivity of conjunction over disjunction we have 
\[
\mathbf{LK} \vdash \bigwedge_{(i,j) \in M} p_{i,j} \wedge \bigwedge_{(i,j) \in N} q_{i,j} \Rightarrow [\bigwedge_{i,l}(s_{i,l} \vee s'_{i,l}) \to \alpha^k_n (\bar{p}, \bar{s}, \bar{s'})] \vee [\bigwedge_{i,l}(r_{i,l} \vee r'_{i,l}) \to \beta^{k+1}_n (\bar{q}, \bar{r}, \bar{r'})]
\]
for any $M$ and $N$as a partition for the set $\{ (i,j) \mid i,j \in \{1, \cdots, n-1\} \}$.
For such $M$ and $N$, using Lemma \ref{LemmaMVP} we have either 
\[
\mathbf{LK} \vdash \bigwedge_{(i,j) \in M} p_{i,j} \Rightarrow (\bigwedge_{i,l}(s_{i,l} \vee s'_{i,l}) \to \alpha^k_n (\bar{p}, \bar{s}, \bar{s'})),  
\]
or
\[
\mathbf{LK} \vdash \bigwedge_{(i,j) \in N} q_{i,j} \Rightarrow (\bigwedge_{i,l}(r_{i,l} \vee r'_{i,l}) \to \beta^{k+1}_n (\bar{q}, \bar{r}, \bar{r'})).
\]
We consider the first case. The second one is similar.
Suppose the first case holds. Using the cut rule, we have 
\[
\mathbf{LK} \vdash \bigwedge_{(i,j) \in M} p_{i,j} \; , \; \bigwedge_{i,l}(s_{i,l} \vee s'_{i,l}) \Rightarrow  \alpha^k_n (\bar{p}, \bar{s}, \bar{s'}),
\]
and using the left exchange rule we obtain
\[
\mathbf{LK} \vdash \bigwedge_{i,l}(s_{i,l} \vee s'_{i,l}) \; , \; \bigwedge_{(i,j) \in M} p_{i,j} \Rightarrow  \alpha^k_n (\bar{p}, \bar{s}, \bar{s'}).
\]
Now, using the distributivity of conjunction over disjunction in $\mathbf{LK}$, for any $U$ and $V$ as a partition for the set $\{(i,l) \mid i < n, l < k\}$, we have
\[
\mathbf{LK} \vdash (\bigwedge_{(i,l) \in U} s_{i,l} \wedge \bigwedge_{(i,l) \in V} s'_{i,l}) \; , \; \bigwedge_{(i,j) \in M} p_{i,j} \Rightarrow  \alpha^k_n (\bar{p}, \bar{s}, \bar{s'}) , 
\]
or equivalently (using the rules $(L \wedge_1), (L \wedge_2)$, and left contraction),
\[
\mathbf{LK} \vdash  \bigwedge_{(i,l) \in U} s_{i,l} \wedge \bigwedge_{(i,l) \in V} s'_{i,l} \wedge \bigwedge_{(i,j) \in M} p_{i,j} \Rightarrow  \alpha^k_n (\bar{p}, \bar{s}, \bar{s'}).   
\]
Now, using Theorem \ref{ClassicalTautFLe}, we have 
\[
\mathbf{WL} \vdash  (\bigast_{i=1}^{n-1} \bigast_{l=1}^{k-1} S_{i,l}^{U, V}) \; * \; (\bigast_{(i,j) \in M} (p_{i,j} \wedge 1)) \Rightarrow  \alpha^k_n (\bar{p}, \bar{s}, \bar{s'}),  
\]
where $S_{i,l}^{U,V}=\left\{
                \begin{array}{ll}
                 s_{i,l} \wedge 1  \;\; , \;\; (i,j) \in U\\
                 s'_{i,l} \wedge 1 \;\; , \;\; (i,j) \in V 
                \end{array}
              \right.$.\\
Note that by Theorem \ref{ClassicalTautFLe}, we can choose any order on $\bigast_{(i,j) \in M} (p_{i,j} \wedge 1)$. Here, we fix the lexicographic order on $\{(i,j) \mid i,j \in \{1, \ldots, n-1\}\}$.\\
Equivalently (using the fact that for any formulas $A$ and $B$, we have $\mathbf{WL} \vdash A, B \Rightarrow A *B$ and then using the cut rule), we have
\[
\mathbf{WL} \vdash (\bigast_{i=1}^{n-1} \bigast_{l=1}^{k-1} S_{i,l}^{U, V}) \; , \; (\bigast_{(i,j) \in M} (p_{i,j} \wedge 1)) \Rightarrow  \alpha^k_n (\bar{p}, \bar{s}, \bar{s'}).  
\]
Since this sequent is provable for any $U$ and $V$ as a partition for $\{(i,l) \mid i < n, l < k\}$, using the left disjunction rule for $2^{(n-1)(k-1)}-1$ many times we get
\[
\mathbf{WL} \vdash
\bigvee_{U, V} (\bigast_{i=1}^{n-1} \bigast_{l=1}^{k-1} S_{i,l}^{U, V}) \; , \; (\bigast_{(i,j) \in M} (p_{i,j} \wedge 1)) \Rightarrow  \alpha^k_n (\bar{p}, \bar{s}, \bar{s'}).
\]
Using Theorem \ref{Distributivity} and the cut rule we have
\[
\mathbf{WL} \vdash [\bigast_{i} \bigast_l ((s_{i,l} \wedge 1) \vee (s'_{i,l} \wedge 1))] \; , \; (\bigast_{(i,j) \in M} (p_{i,j} \wedge 1)) \Rightarrow  \alpha^k_n (\bar{p}, \bar{s}, \bar{s'}),  
\]
and using the rule $(R \setminus)$ we get 
\[
\mathbf{WL} \vdash \bigast_{(i,j) \in M} (p_{i,j} \wedge 1) \Rightarrow [\bigast_{i} \bigast_l ((s_{i,l} \wedge 1) \vee (s'_{i,l} \wedge 1))] \; \setminus \; \alpha^k_n (\bar{p}, \bar{s}, \bar{s'}).
\]
Recall that the order on $\bigast_{(i,j) \in M}(p_{i,j} \wedge 1)$ is the lexicographic order. Now, by using the rules $(L 1)$ and $(L \wedge_2)$ consecutively for $\parallel N \parallel$-many times and whenever needed (each time producing $q_{i,j} \wedge 1$ for each element of $N$, in the same manner as in the proof of Theorem \ref{ClassicalTautFLe} and in the right place) and then using the rule $(L *)$ for $\parallel N \parallel$-many times and in the end using the rule $(R \vee_1)$, we prove $(\dagger)$.
\end{proof}

\begin{rem}
It is worth noting that the system $\mathbf{WL}$ could have been defined in an alternative way by deleting $/$ instead of $\setminus$ from the language, and having the same initial sequents and rules as $\mathbf{FL_{\bot}}$ and leaving the rules $(R /), (L /),$ and $(L \setminus)$ out. Then, in a similar manner, the following formulas would be provable in this alternative calculus:
\[
[\alpha^k_n (\bar{p}, \bar{s}, \bar{s'}) / \bigast_{i=1}^{n-1} \bigast_{l=1}^{k-1} ((s_{i,l} \wedge 1) \vee (s'_{i,l} \wedge 1))] \; \vee \;
 [\beta^{k+1}_n (\bar{q}, \bar{r}, \bar{r'}) / \bigast_{i=1}^{n-1} \bigast_{l=1}^{k-1}((r_{i,l} \wedge 1) \vee (r'_{i,l} \wedge 1))] \;\; / 
\]
\[
[\bigast_{i=1}^{n-1} \bigast_{j=1}^{n-1} ((p_{i,j} \wedge 1) \vee (q_{i,j} \wedge 1)].
\]
\end{rem}

Now, we are ready to present tautologies in $\mathsf{FL}$ and $\mathsf{BPC}$. It is easy to see that the tautologies introduced in Theorem \ref{HardTautologiesFLe} are provable in basic substructural logics.

\begin{cor} \label{CorThetaFL}
The formulas $\Theta^{*}_{n,k}$ are provable in the logic $\mathsf{FL}$. 
\end{cor}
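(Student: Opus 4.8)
The plan is to lift the $\mathbf{WL}$-provability already established in Theorem \ref{HardTautologiesFLe} up to $\mathsf{FL}$, using only that $\mathbf{WL}$ is a fragment of $\mathbf{FL}_\bot$ together with the conservativity of $\mathbf{FL}_\bot$ over $\mathbf{FL}$ recorded at the start of Section \ref{SectionDescent}. All the combinatorial content has been discharged in Theorem \ref{HardTautologiesFLe}, so what remains is a short transfer argument.

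First I would observe that $\mathbf{WL}$ is by definition a subsystem of $\mathbf{FL}_\bot$: its language $\{1,\bot,\wedge,\vee,*,\setminus\}$ is a sublanguage of that of $\mathbf{FL}_\bot$, and every initial sequent and every inference rule of $\mathbf{WL}$ is verbatim an initial sequent or rule of $\mathbf{FL}_\bot$ --- the sole difference being that $\mathbf{WL}$ omits $(L/)$, $(R/)$ and $(L\setminus)$, i.e.\ it has strictly fewer rules. Hence the $\mathbf{WL}$-derivation of $\Rightarrow \Theta^{*}_{n,k}$ produced in the proof of Theorem \ref{HardTautologiesFLe} is, without any change, a derivation in $\mathbf{FL}_\bot$, giving $\mathbf{FL}_\bot \vdash \,\Rightarrow \Theta^{*}_{n,k}$.

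Second I would record that, although this derivation genuinely uses $\bot$ internally --- it enters through the substitution $\sigma_v$ of Definition \ref{SubstitutionBotValuation} and the $\bot$-initial sequents invoked in Lemma \ref{EitherOr} and Theorem \ref{ClassicalTautFLe} --- the conclusion $\Theta^{*}_{n,k}$ is built only from $\{\wedge,\vee,*,\setminus,1\}$ and so contains neither $\bot$ nor $0$. This is exactly the situation anticipated in Section \ref{SectionDescent}: since $\mathbf{FL}_\bot$ is conservative over $\mathbf{FL}$, a $\bot$-free theorem of $\mathbf{FL}_\bot$ is already a theorem of $\mathbf{FL}$. Thus $\mathbf{FL} \vdash \,\Rightarrow \Theta^{*}_{n,k}$, which by Definition \ref{DfnDeducibilityFL} is precisely $\vdash_{\mathsf{FL}} \Theta^{*}_{n,k}$, that is, $\Theta^{*}_{n,k} \in \mathsf{FL}$.

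I do not expect any genuine obstacle here, since the real work lies in Theorem \ref{HardTautologiesFLe}; the only step needing care is the conservativity invocation. Even that reduces to the cut elimination theorem for $\mathbf{FL}_\bot$ together with the subformula property: a cut-free proof of the $\bot$-free sequent $\Rightarrow \Theta^{*}_{n,k}$ can contain no occurrence of $\bot$, so the $\bot$-initial sequent is never applied and the proof is already an $\mathbf{FL}$-proof.
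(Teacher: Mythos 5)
Your proof is correct and follows essentially the same route as the paper: the paper's proof likewise observes that $\mathbf{WL}$ is a subsystem of $\mathbf{FL_{\bot}}$ and then invokes cut elimination for $\mathbf{FL_{\bot}}$ together with the $\bot$-freeness of $\Theta^{*}_{n,k}$ to conclude provability in $\mathsf{FL}$. The only difference is that you spell out the conservativity step (via the subformula property of a cut-free proof) which the paper leaves implicit.
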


\begin{proof}
Clearly, $\mathbf{WL}$ is a subsystem of the sequent calculus $\mathbf{FL_{\bot}}$. Then, using the cut elimination theorem for $\mathbf{FL_{\bot}}$ \cite{Ono}, and the fact that $\Theta^{*}_{n,k}$ do not contain $\bot$, we obtain the result. 
\end{proof}

To provide tautologies in $\mathsf{BPC}$, we need the translation function $t$, defined in Section \ref{SectionFregeExtendedFrege}.

\begin{cor} \label{CorBPCFL} 
The formulas $(\Theta^{*}_{n,k})^t$ are provable in $\mathsf{BPC}$.
\end{cor}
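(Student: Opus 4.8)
The plan is to read the result off directly from Theorem \ref{HardTautologiesFLe} by means of the translation Lemma \ref{LemTranslationWLBPC}, so almost no new work is required. First I would observe that $\Theta^{*}_{n,k}$ is built only from the connectives $\wedge$, $\vee$, $*$, $\setminus$ and the constant $1$ (the subformulas $\alpha^k_n$ and $\beta^{k+1}_n$ are themselves conjunctions and disjunctions of atoms, with no implication or constants), so it is a formula in the language $\{1, \bot, \wedge, \vee, *, \setminus\}$ of $\mathbf{WL}$, and the translation $t$ applies to it.

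Next, Theorem \ref{HardTautologiesFLe} gives $\mathbf{WL} \vdash \; \Rightarrow \Theta^{*}_{n,k}$, that is, the sequent with empty antecedent and succedent $\Theta^{*}_{n,k}$ is provable in $\mathbf{WL}$. I would then invoke Lemma \ref{LemTranslationWLBPC} with $\Gamma$ the empty sequence and $A = \Theta^{*}_{n,k}$. Since the empty sequence is mapped to itself under $t$, the lemma yields $\mathbf{BPC} \vdash \; \Rightarrow (\Theta^{*}_{n,k})^t$. By the definition of the logic $\mathsf{BPC}$ as the set of formulas $\phi$ with $\mathbf{BPC} \vdash \; \Rightarrow \phi$, this is exactly the assertion that $(\Theta^{*}_{n,k})^t \in \mathsf{BPC}$, i.e., that these formulas are provable in $\mathsf{BPC}$ for all $1 \leq k \leq n$.

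The corollary therefore carries no genuine obstacle of its own: all the combinatorial and proof-theoretic effort has already been spent in establishing Theorem \ref{HardTautologiesFLe} and in verifying Lemma \ref{LemTranslationWLBPC}. The only points that deserve a moment's care are confirming that $\Theta^{*}_{n,k}$ indeed lies in the restricted language of $\mathbf{WL}$ (so that $(\Theta^{*}_{n,k})^t$ is well defined and lands in the $\mathbf{BPC}$-language $\{\wedge, \vee, \top, \bot, \to\}$), and that Lemma \ref{LemTranslationWLBPC} is being applied in its legitimate empty-antecedent instance, where $\Gamma^t = \Gamma = \emptyset$. This mirrors the structure of Corollary \ref{CorThetaFL}, which derived the analogous statement for $\mathsf{FL}$ from the same Theorem \ref{HardTautologiesFLe} using conservativity and cut elimination instead of the translation $t$.
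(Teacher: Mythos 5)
Your proof is correct and takes essentially the same route as the paper: the paper's one-line proof likewise obtains the result by combining the $\mathbf{WL}$-provability of $\Theta^{*}_{n,k}$ (Theorem \ref{HardTautologiesFLe}, whose engine is Theorem \ref{ClassicalTautFLe}, which is what the paper actually cites) with the translation Lemma \ref{LemTranslationWLBPC} applied in its empty-antecedent instance. Your explicit checks---that $\Theta^{*}_{n,k}$ lies in the $\mathbf{WL}$-language so that $t$ applies, and that provability in $\mathsf{BPC}$ means derivability of $\Rightarrow (\Theta^{*}_{n,k})^t$ in $\mathbf{BPC}$---are exactly the details the paper leaves implicit.
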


\begin{proof}
The provability of $(\Theta^{*}_{n,k})^t$ is a consequence of Theorem \ref{ClassicalTautFLe} and Lemma \ref{LemTranslationWLBPC}. 
\end{proof}

From now on, since we only use the case where $k= \lfloor \sqrt{n} \rfloor$ and we will prove the lower bound for this case, we fix $k = \lfloor \sqrt{n} \rfloor$ and use the notation $\Theta^{*}_n = \Theta^{*}_{n, \lfloor \sqrt{n} \rfloor}$ for it.

\section{The main theorem} \label{SectionLowerBound} 
In this section we will present the main result of the paper. We will prove that there exists an exponential lower bound on the lengths of proofs in proof systems for a wide range of logics. Furthermore, we will obtain an exponential lower bound on the number of proof-lines in a broad range of Frege systems.

\begin{thm} \label{MainTheorem} 
Let $\mathsf{L}$ be a super-intitionistic logic of infinite branching, which has a Frege system.
\begin{itemize}
\item[$(i)$]
Let $\mathbf{P}$ be a proof system for a logic with the language $\mathcal{L}^*$ such that $\mathbf{P}$ is at least as strong as $\mathbf{FL}$ and $\mathbf{P} \leq^{t} \mathsf{L}-\mathbf{EF}$. Then $\mathbf{P} \vdash \Theta^*_n$ and the length of any such proof is exponential in $n$. 
\item[$(ii)$]
Let $\mathbf{P}$ be a proof system for a logic with the language $\mathcal{L}$ such that $\mathbf{P}$ is at least as strong as $\mathbf{BPC}$ and $\mathbf{P} \leq \mathsf{L}-\mathbf{EF}$. Then $\mathbf{P} \vdash (\Theta^*_n)^t$ and the length of any such proof is exponential in $n$. 
\end{itemize}
\end{thm}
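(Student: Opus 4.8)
The plan is to reduce the asserted lower bound to the known lower bound for the intuitionistic tautologies $\Theta_n$ (Theorem \ref{JerabekAsli}), with the translation $t$ serving as the bridge. First I would dispose of provability. In case $(i)$ we have $\Theta^*_n \in \mathsf{FL}$ by Corollary \ref{CorThetaFL}, so $(\Rightarrow \Theta^*_n)$ has an $\mathbf{FL}$-proof; since $\mathbf{P}$ is at least as strong as $\mathbf{FL}$ in the sense of Definition \ref{DfnProofSystemSimulate}$(i)$, this immediately gives $\mathbf{P} \vdash \Theta^*_n$. In case $(ii)$ we have $(\Theta^*_n)^t \in \mathsf{BPC}$ by Corollary \ref{CorBPCFL}, and $\mathbf{P}$ being at least as strong as $\mathbf{BPC}$ yields $\mathbf{P} \vdash (\Theta^*_n)^t$ in the same way.

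The heart of the argument is the observation that $(\Theta^*_n)^t$ is $\mathsf{L}$-equivalent to $\Theta_n$ through a proof of size polynomial in $n$. Inspecting Definitions \ref{DfnTranslation} and \ref{Jerabek}, one sees that $(\Theta^*_n)^t$ is obtained from $\Theta_n$ by replacing, in the three antecedent conjunctions $\bigwedge (p_{i,j} \vee q_{i,j})$, $\bigwedge (s_{i,l} \vee s'_{i,l})$ and $\bigwedge (r_{i,l} \vee r'_{i,l})$, each atom $x$ by $x \wedge \top$; the nuclei $\alpha^k_n$ and $\beta^{k+1}_n$ are unaffected, since they are built only from $\wedge, \vee$ and hence satisfy $(\alpha^k_n)^t = \alpha^k_n$ and $(\beta^{k+1}_n)^t = \beta^{k+1}_n$. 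Because $x \wedge \top \equiv x$ holds in $\mathsf{IPC} \subseteq \mathsf{L}$ and equivalence is closed under substitution into contexts (the remark following Definition \ref{DfnEF}), I would chain the $O(n^2)$ atomic replacements to build a polynomial-size $\mathsf{L}-\mathbf{F}$-proof of $(\Theta^*_n)^t \to \Theta_n$; as $\mathsf{L}-\mathbf{EF}$ simulates $\mathsf{L}-\mathbf{F}$, the same polynomial bound holds in $\mathsf{L}-\mathbf{EF}$.

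Finally I would assemble the simulation chain. Let $\pi$ be any $\mathbf{P}$-proof of the relevant target formula, of length $\ell$. The hypothesis $\mathbf{P} \leq^{t} \mathsf{L}-\mathbf{EF}$ in case $(i)$, or $\mathbf{P} \leq \mathsf{L}-\mathbf{EF}$ in case $(ii)$, supplies a fixed polynomial $g$ and an $\mathsf{L}-\mathbf{EF}$-proof of $(\Theta^*_n)^t$ of length at most $g(\ell)$. Concatenating this with the polynomial-size proof of $(\Theta^*_n)^t \to \Theta_n$ and a single modus ponens step (legitimate because $\mathsf{L}$ is super-intuitionistic) yields an $\mathsf{L}-\mathbf{EF}$-proof of $\Theta_n$ of length at most $g(\ell) + \mathrm{poly}(n)$. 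By Theorem \ref{JerabekAsli} this length is at least $2^{n^{\Omega(1)}}$, and since $g$ is a fixed polynomial and $\mathrm{poly}(n) = o(2^{n^{\Omega(1)}})$, inverting $g$ forces $\ell \geq 2^{n^{\Omega(1)}}$, i.e.\ exponential in $n$. The main obstacle is the middle step: one must confirm that replacing equivalents inside the contexts of $\Theta_n$ can be carried out with only polynomial proof overhead, which rests on the substitution-closure of $\equiv$ together with the elementary identity $x \wedge \top \equiv x$.
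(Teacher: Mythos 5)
Your proposal is correct and follows essentially the same route as the paper's proof: establish provability in $\mathbf{P}$ via Corollaries \ref{CorThetaFL}/\ref{CorBPCFL} and the "at least as strong as" hypothesis, use the simulation to obtain a polynomially bounded $\mathsf{L}-\mathbf{EF}$-proof of $(\Theta^*_n)^t$, construct a polynomial-size $\mathsf{L}-\mathbf{EF}$-proof of $(\Theta^*_n)^t \to \Theta_n$ from the equivalences $u \wedge \top \equiv u$, apply modus ponens, and invoke Theorem \ref{JerabekAsli}. The only cosmetic difference is that the paper justifies the availability of modus ponens by invoking Lemma \ref{LemEquivalenceOfEveryFrege} to assume w.l.o.g.\ that $\mathsf{L}-\mathbf{EF}$ has an explicit modus ponens rule, whereas you appeal directly to $\mathsf{L}$ being super-intuitionistic; both amount to the same point.
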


\begin{proof}
$(i)$ First, observe that by Lemma \ref{LemEquivalenceOfEveryFrege}, any two extended Frege systems for $\mathsf{L}$ are polynomially equivalent. Hence, w.l.o.g. we will assume that $\mathsf{L}-\mathbf{EF}$ has an explicit modus ponens rule. Then, note that since $\mathbf{FL} \vdash \Theta^*_n$, by Theorem \ref{HardTautologiesFLe}, and the assumption that $\mathbf{P}$ is at least as strong as $\mathbf{FL}$, the formulas $\Theta^*_n$ are also provable in $\mathbf{P}$.
Take such a proof $\pi$, i.e., $\mathbf{P} \vdash^{\pi} \Theta^*_n$. Since $\mathbf{P} \leq^{t} \mathsf{L}-\mathbf{EF}$, there exists a polynomial $p$ and a proof $\pi'$, such that $\mathsf{L}-\mathbf{EF} \vdash^{\pi'} (\Theta^*_n)^t$ and $|\pi'| \leq p (|\pi|)$. As another step towards the claim, we want to provide $x$ such that $\mathsf{L}-\mathbf{EF} \vdash^x (\Theta^*_n)^t \to \Theta_n$ with its length, $|x|$, and therefore the number of its  proof-lines, $\lambda(x)$, is polynomial in $n$. Note that the formulas $\Theta_n$ and $(\Theta^*_n)^t$ are almost identical, except that for any atom $u$ in $\Theta_n$ the formula $u \wedge \top$ is present in its place in $(\Theta^*_n)^t$. Now, since $\mathsf{L}$ is a super-intuitionistic logic, we have $\mathsf{L}-\mathbf{EF} \vdash u \wedge \top \leftrightarrow u$, for any atom $u$ present in the formula $\Theta_n$. This proof has a fix number of proof-lines in $\mathsf{L}-\mathbf{EF}$. The claim then easily follows from the fact that the length of the formula $\Theta_n$, hence the number of the connectives and the number of the atoms $u$ in $\Theta_n$ are also polynomial in $n$. It means that $|x|$ and hence $\lambda(x)$ is polynomial in $n$. Therefore, putting $\pi'$ and $x$ together, since we have the explicit modus ponens rule in $\mathsf{L}-\mathbf{EF}$, we can show that the formula $\Theta_n$ is provable in $\mathsf{L}-\mathbf{EF}$ with a proof polynomially long in $n$ and $|\pi'|$. (This indicates that the number of the proof-lines of $\Theta_n$ in $\mathsf{L}-\mathbf{EF}$ is also polynomial in $|\pi'|$ and $n$. We will use this fact in the proof of Theorem \ref{ThmFLBPC-EF}) By Theorem \ref{JerabekAsli}, any $\mathsf{L}-\mathbf{EF}$-proof of $\Theta_n$ has length at least $2^{\Omega(n^{1/4})}$. Therefore, the length of $\pi'$, and hence the length of $\pi$, must be exponential in $n$.\\
$(ii)$ The proof for this part is similar to that of $(i)$. Here, by Corollary \ref{CorBPCFL}, the formulas $(\Theta^*_n)^t$ are provable in $\mathbf{BPC}$ and hence in $\mathbf{P}$. Since $\mathsf{L}-\mathbf{EF}$ polynomially simulates $\mathbf{P}$, we obtain the exponential lower bound using the fact that $\mathsf{L}-\mathbf{EF} \vdash (\Theta^*_n)^t \to \Theta_n$ with a proof polynomially long in $n$.
\end{proof}

The following theorem states an exponential lower bound on the number of proof-lines in a wide range of Frege systems.

\begin{thm} \label{ThmFLBPC-EF}
Let $\mathsf{M}$ be a super-intitionistic logic of infinite branching, which has a Frege system.
\begin{itemize}
\item[$(i)$]
Let $\mathsf{L}$ be a substructural logic with the language $\mathcal{L^*}$ such that $\mathsf{L} \subseteq^t \mathsf{M}$. Then, the number of lines of every proof of $\Theta^*_n$ in $\mathsf{L}-\mathbf{F}$ is exponential in $n$ and every proof of $\Theta^*_n$ in $\mathsf{L}-\mathbf{EF}$ has length exponential in $n$.
\item[$(ii)$]
Let $\mathsf{L}$ be a super-basic logic with the language $\mathcal{L}$ such that $\mathsf{L} \subseteq \mathsf{M}$. Then, the number of lines of every proof of $(\Theta^*_n)^t$ in any $\mathsf{L}-\mathbf{F}$ system wrt $\mathsf{M}$ is exponential in $n$ and every proof of $(\Theta^*_n)^t$ in any $\mathsf{L}-\mathbf{EF}$ system wrt $\mathsf{M}$ has length exponential in $n$. 
\end{itemize}
\end{thm}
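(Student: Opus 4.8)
The plan is to translate a proof of the hard formula in a system for $\mathsf{L}$ into a proof in a Frege (extended Frege) system for the super-intuitionistic logic $\mathsf{M}$, losing only a constant factor in the number of lines (a polynomial factor in the length), and then to invoke Theorem~\ref{JerabekAsli}. I would treat part $(i)$ first. Fix a Frege system $\mathbf{P}$ for $\mathsf{L}$ and a proof $\mathbf{P}\vdash^{\pi}\Theta^*_n$; such a proof exists since $\Theta^*_n\in\mathsf{FL}\subseteq\mathsf{L}$ by Corollary~\ref{CorThetaFL} and $\mathbf{P}$ is strongly complete. Every rule of $\mathbf{P}$ is $\mathsf{L}$-standard, i.e.\ $\phi_1,\ldots,\phi_m\vdash_{\mathsf{L}}\phi$, so by Remark~\ref{RemLM} together with the hypothesis $\mathsf{L}\subseteq^t\mathsf{M}$ its translation satisfies $\phi_1^t,\ldots,\phi_m^t\vdash_{\mathsf{M}}\phi^t$. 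Since $\mathsf{M}$ is super-intuitionistic and has a Frege system, by strong completeness each such translated rule possesses an $\mathsf{M}-\mathbf{F}$-derivation, and because $\mathbf{P}$ has only finitely many rules the number of lines of these derivations is bounded by a constant $c$; by closure of Frege derivations under substitution the same $c$ works for every instance.

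Applying $t$ to each line of $\pi$ and replacing each rule step by the $\mathsf{M}-\mathbf{F}$-derivation of its instance yields $\mathsf{M}-\mathbf{F}\vdash(\Theta^*_n)^t$ with at most $c\,\lambda(\pi)$ lines. By Lemma~\ref{LemEquivalenceOfEveryFrege} I may assume $\mathsf{M}-\mathbf{F}$ contains an explicit modus ponens rule. As observed in the proof of Theorem~\ref{MainTheorem}, one has $\mathsf{M}-\mathbf{F}\vdash(\Theta^*_n)^t\to\Theta_n$ with a number of lines polynomial in $n$, since the two formulas differ only by replacing each atom $u$ by $u\wedge\top$, and $\mathsf{M}\vdash u\wedge\top\leftrightarrow u$ propagates through the formula by congruence. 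Combining the two derivations by modus ponens produces an $\mathsf{M}-\mathbf{F}$-proof of $\Theta_n$ with $c\,\lambda(\pi)+\mathrm{poly}(n)$ lines. By Theorem~\ref{JerabekAsli} this quantity is at least $2^{n^{\Omega(1)}}$, forcing $\lambda(\pi)$ to be exponential in $n$.

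For the extended Frege statement of $(i)$ the argument is identical except that lengths replace line counts. Here I would also verify that extension axioms translate correctly: the substructural extension axiom $(p\setminus\phi)\wedge(\phi\setminus p)$ has $t$-image $(p\to\phi^t)\wedge(\phi^t\to p)$, which is exactly the extension axiom $p\equiv\phi^t$ of $\mathsf{M}-\mathbf{EF}$, and the freshness side-conditions on $p$ are preserved because $t$ does not alter variables. Replacing each rule step of $\pi$ by its bounded $\mathsf{M}-\mathbf{EF}$-derivation now increases the length only by a polynomial factor, since the derivation of a rule instance has length polynomial in the sizes of the formulas substituted into it and $|\psi^t|=|\psi|$; composing with the polynomial-length proof of $(\Theta^*_n)^t\to\Theta_n$ and applying Theorem~\ref{JerabekAsli} would then make $|\pi|$ exponential in $n$.

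Part $(ii)$ follows the same template and is in fact simpler, since no translation is needed: $\mathsf{L}$ and $\mathsf{M}$ share the language $\mathcal{L}$, and $(\Theta^*_n)^t\in\mathsf{BPC}\subseteq\mathsf{L}$ by Corollary~\ref{CorBPCFL}. The rules of an $\mathsf{L}-\mathbf{F}$ system \emph{wrt} $\mathsf{M}$ are by definition $\mathsf{M}$-standard, so each proof step is directly an $\mathsf{M}$-valid inference and may be replaced by a bounded $\mathsf{M}-\mathbf{F}$-derivation exactly as above, the rest being unchanged. The main obstacle throughout is the passage from ``$\mathsf{M}$-soundness of the translated rules'' to an actual $\mathsf{M}$-proof of controlled size: this is precisely where finiteness of the rule set of $\mathbf{P}$, strong completeness of $\mathsf{M}-\mathbf{F}$, and closure of Frege and extended Frege derivations under substitution are jointly needed, ensuring a single rule application costs only $O(1)$ lines (respectively polynomial length) and keeping the blow-up small enough for the lower bound of Theorem~\ref{JerabekAsli} to transfer back to $\mathsf{L}$.
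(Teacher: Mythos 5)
Your proposal is correct and takes essentially the same route as the paper: both reduce to Theorem \ref{JerabekAsli} by translating an $\mathsf{L}$-proof of $\Theta^*_n$ (resp.\ $(\Theta^*_n)^t$) via $t$, using Remark \ref{RemLM} together with strong completeness and finiteness of the rule set to handle each (translated, $\mathsf{M}$-standard) rule at constant line-cost, then composing with a polynomial-size $\mathsf{M}$-proof of $(\Theta^*_n)^t \to \Theta_n$. The only difference is organizational: the paper packages the rule simulation by building an auxiliary Frege (extended Frege) system for $\mathsf{M}$ that contains the translated rules outright, so that $\pi \mapsto \pi^t$ is a blow-up-free embedding, and then cites Theorem \ref{MainTheorem} and Lemma \ref{LemEquivalenceOfEveryFrege}, whereas you inline exactly that simulation rule-by-rule.
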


\begin{proof}
In order to use Theorem \ref{MainTheorem}, we have to show that for any given $\mathsf{L}-\mathbf{EF}$, it is as strong as $\mathbf{FL}$ and also we have $\mathsf{L}-\mathbf{EF} \leq^t \mathsf{M}-\mathbf{EF}$. Since $\mathsf{L}$ is a substructural logic, clearly $\mathsf{L}-\mathbf{EF}$ is as strong as $\mathbf{FL}$. For the second condition to hold, we will provide an extended Frege system $\mathbf{P}$ for $\mathsf{M}$ such that for any proof $\pi= \phi_1, \ldots, \phi_m$ in $\mathsf{L}-\mathbf{EF}$, $\pi^t= \phi_1^t, \ldots, \phi_m^t$ will be a proof in $\mathbf{P}$. Clearly, the function $g$ in Definition \ref{DfnProofSystemSimulate} is the identity function since $|\pi|=|\pi^t|$. This is because for any formula $\phi$ in the language $\mathcal{L^*}$ we have $|\phi|= |\phi^t|$, therefore the length of $\pi$ is the same as the length of $\pi^t$ and also $\lambda(\pi) = \lambda(\pi^t)$.\\
Fix an extended Frege system $\mathbf{Q}$ for the logic $\mathsf{M}$. This is possible by the assumption of the existence of a Frege system for $\mathsf{M}$. Define the system $\mathbf{P}$ as the system consisting of all the rules in $\mathbf{Q}$, the modus ponens rule, and the rules:
\begin{center}
  	\begin{tabular}{c}
  		\AxiomC{$A_1^t$}
  		\AxiomC{$\ldots$}
	  	\AxiomC{$A_l^t$}
	  	\TrinaryInfC{$A^t$}
  		\DisplayProof
\end{tabular}
\end{center}
for any rule of $\mathsf{L}-\mathbf{EF}$ of the form:
\begin{center}
  	\begin{tabular}{c}
	  	\AxiomC{$A_1$}
	  	\AxiomC{$\ldots$}
	  	\AxiomC{$A_l$}
	  	\TrinaryInfC{$A$}
  		\DisplayProof
\end{tabular}
\end{center}
First of all, we have to show that $\mathbf{P}$ is an extended Frege system for the logic $\mathsf{M}$. Therefore, we have to check that $\mathbf{P}$ satisfies all the conditions in Definition \ref{DfnFrege}. Condition $1$ is obvious and condition $2$ is a result of condition $4$. $\mathbf{P}$ is strongly complete for $\mathsf{M}$, since it contains $\mathbf{Q}$ and $\mathbf{Q}$ is strongly complete for $\mathsf{M}$, therefore condition $3$ holds. For condition $4$, note that for any rule in $\mathsf{L}-\mathbf{EF}$ of the form:
\begin{center}
  	\begin{tabular}{c}
	  	\AxiomC{$A_1$}
	  	\AxiomC{$\ldots$}
	  	\AxiomC{$A_l$}
	  	\TrinaryInfC{$A$}
  		\DisplayProof
\end{tabular}
\end{center}
since all the rules in $\mathsf{L}-\mathbf{EF}$ are standard, we have $A_1, \ldots\, A_l \vdash_{\mathsf{L}} A$. By Remark \ref{RemLM}, $A_1^t, \ldots\, A_l^t \vdash_{\mathsf{M}} A^t$. Hence, all the new rules in $\mathbf{P}$ are standard with respect to $\mathsf{M}$. \\
Now, let $\pi= \phi_1, \ldots, \phi_m=\phi$ be a proof for $\phi$ in $\mathsf{L}-\mathbf{EF}$. Then, each $\phi_i$ is either an extension axiom, or it is derived from $\{\phi_{j_1}, \ldots, \phi_{j_l}\}$ such that all $j_r$'s are less than $i$. It is clear that $\pi^t= \phi_1^t, \ldots, \phi_m^t=\phi^t$ is a proof in $\mathbf{P}$. Because the translation $t$ of the extension axiom of $\mathsf{L}-\mathbf{EF}$ will be the extension axiom of $\mathsf{M}-\mathbf{EF}$ and moreover,
\begin{center}
  	\begin{tabular}{c}
  		\AxiomC{$\phi_{j_1}^t$}
  		\AxiomC{$\ldots$}
	  	\AxiomC{$\phi_{j_l}^t$}
	  	\TrinaryInfC{$\phi_i^t$}
  		\DisplayProof
\end{tabular}
\end{center}
is an instance of a rule in $\mathbf{P}$. Furthermore, the number of proof-lines stay the same, i.e., $\lambda(\pi) = \lambda (\pi^t)$. \\
Note that the above construction also works for the case of Frege systems. It is easy to see that the translation of every proof in $\mathsf{L}-\mathbf{F}$ will be a proof in $\mathsf{M}-\mathbf{F}$, and the number of proof-lines stay the same. Hence, for a proof $\pi$ of $\Theta^*_n$ in $\mathsf{L}-\mathbf{F}$, $\pi^t$ will be a proof of $(\Theta^*_n)^t$ in $\mathsf{M}-\mathbf{F}$. Moreover, by the discussion in the proof of Theorem \ref{MainTheorem}, we can find a proof for $(\Theta^*_n)^t \to \Theta_n$ whose number of proof-lines is polynomial in $n$. Therefore, the bound on the number of proof-lines follows, again by Theorem \ref{JerabekAsli}.\\

For part $(ii)$, the strategy is similar to that of part $(i)$, and again we use Theorem \ref{MainTheorem}. First, note that any $\mathsf{L}-\mathbf{EF}$ is at least as strong as $\mathbf{BPC}$. Second, we provide an extended Frege system $\mathbf{P}$ for $\mathsf{M}$ such that $\mathsf{L}-\mathbf{EF} \leq \mathbf{P}$. First, fix an extended Frege system $\mathbf{Q}$ for the logic $\mathsf{M}$. Add the rules of the $\mathsf{L}-\mathbf{EF}$ system wrt to $\mathsf{M}$ to $\mathbf{Q}$. The resulting system, which we denote by $\mathbf{P}$, is an extended Frege system for the logic $\mathsf{M}$. The reason is similar to the argument in the part $(i)$, using the facts that $\mathsf{L} \subseteq \mathsf{M}$ and all the rules of $\mathsf{L}-\mathbf{EF}$ system wrt $\mathsf{M}$ are $\mathsf{M}$-standard. Therefore, the lower bound on the length of the proof follows. For Frege systems and the lower bound on the number of proof-lines, we use an argument similar to the one in part $(i)$.
\end{proof}

The following corollary is the main concrete application of the paper.

\begin{cor} \label{CorKolli}
\begin{itemize}
\item
Let $S$ be any subset of $\{e,c,i,o\}$, and $\mathsf{L}$ be $\mathsf{FL_S}$, or any of the logics of the sequent calculi in Table \ref{table:pekh}. Then, 
the number of lines of every proof of $\Theta^*_n$ in $\mathsf{L}-\mathbf{F}$ is exponential in $n$ and every proof of $\Theta^*_n$ in $\mathsf{L}-\mathbf{EF}$ has length exponential in $n$.
\item
Let $\mathsf{L}$ be $\mathsf{BPC}$ or $\mathsf{EBPC}$ and $\mathsf{M}$ be a super-intuitionistic logic of infinite branching with a Frege system (e.g., $\mathsf{M}= \mathsf{IPC}$). Then, 
the number of lines of every proof of $(\Theta^*_n)^t$ in any $\mathsf{L}-\mathbf{F}$ system wrt $\mathsf{M}$ is exponential in $n$ and every proof of $\Theta^*_n$ in any $\mathsf{L}-\mathbf{EF}$ system wrt $\mathsf{M}$ has length exponential in $n$.
\end{itemize}
\end{cor}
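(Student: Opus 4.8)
The plan is to obtain both items directly from Theorem~\ref{ThmFLBPC-EF} by verifying its hypotheses case by case; all the genuine work has already been done in Theorem~\ref{HardTautologiesFLe} and Corollaries~\ref{CorThetaFL} and~\ref{CorBPCFL} (provability of the targets) and in the simulation argument inside Theorem~\ref{ThmFLBPC-EF}. For item~(i) I must exhibit, for every listed substructural logic $\mathsf{L}$, a super-intuitionistic logic $\mathsf{M}$ of infinite branching that possesses a Frege system and satisfies $\mathsf{L} \subseteq^t \mathsf{M}$; for item~(ii) I must check that $\mathsf{BPC}$ and $\mathsf{EBPC}$ lie below such an $\mathsf{M}$. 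Once these inclusions are in place, the proof-line bound for $\mathsf{L}-\mathbf{F}$ and the length bound for $\mathsf{L}-\mathbf{EF}$ follow verbatim from Theorem~\ref{ThmFLBPC-EF}(i) and~(ii) respectively.

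The canonical witness is $\mathsf{M}=\mathsf{IPC}$. It carries a Frege system (for example $\mathbf{HJ}$), and it has infinite branching: as the least super-intuitionistic logic it satisfies $\mathsf{IPC}\subseteq\mathsf{BD_2}$, so Theorem~\ref{ThmInfiniteBranching} applies. This already settles item~(ii), and in fact for every admissible $\mathsf{M}$: any super-intuitionistic logic contains $\mathsf{IPC}$, while the preliminaries record $\mathsf{BPC}\subsetneqq\mathsf{EBPC}\subsetneqq\mathsf{IPC}$, so $\mathsf{BPC},\mathsf{EBPC}\subseteq\mathsf{M}$ for every super-intuitionistic $\mathsf{M}$. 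Hence Theorem~\ref{ThmFLBPC-EF}(ii) applies with $\mathsf{L}\in\{\mathsf{BPC},\mathsf{EBPC}\}$ and any infinite-branching $\mathsf{M}$ with a Frege system, giving the stated line and length bounds for all $\mathsf{L}-\mathbf{F}$ and $\mathsf{L}-\mathbf{EF}$ systems wrt $\mathsf{M}$.

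The substance of item~(i) is the verification of $\mathsf{L}\subseteq^t\mathsf{IPC}$. For $\mathsf{L}=\mathsf{FL_S}$ I would translate an arbitrary $\mathbf{FL_S}$-derivation rule by rule, in the manner of Lemma~\ref{LemTranslationWLBPC} and Remark~\ref{RemLM}: the logical rules of $\mathbf{FL}$ become derivable $\mathbf{LJ}$-inferences once $*$ is read as $\wedge$ and $\setminus,/$ as $\to$, while each structural rule in $S$ is admissible in $\mathbf{LJ}$; hence every $\mathsf{FL_S}$-theorem maps to an $\mathsf{IPC}$-theorem. For the calculi of Table~\ref{table:pekh} it then suffices to check that the $t$-image of each adjoined initial sequent is intuitionistically valid, which is routine: $(\phi\setminus 0 \Leftrightarrow 0/\phi)^t$, the distributivity sequents of $\mathbf{DFL}$, $(\phi^n\Leftrightarrow\phi^{n+1})^t$, the $\mathbf{psBL}$-equivalence $\phi\wedge\psi\leftrightarrow\phi\wedge(\phi\to\psi)$, and $(\phi\Leftrightarrow\phi^2)^t$ all collapse to theorems of $\mathsf{IPC}$.

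The one step demanding real care is the family built over $\mathbf{RL}$, namely $\mathbf{RL},\mathbf{DRL},\mathbf{IRL},\mathbf{CRL},\mathbf{GBH},\mathbf{Br}$. Here the defining sequent $0\Leftrightarrow 1$ translates, since $0^t=\bot$ and $1^t=\top$, to $\bot\leftrightarrow\top$; read naively this would force $\top\to\bot\in\mathsf{M}$, making $\mathsf{M}$ inconsistent and hence (by Theorem~\ref{ThmInfiniteBranching}) of finite branching, which would wreck the argument. The way out is that in these systems $0$ is provably identified with $1$ and is therefore eliminable: since the targets $\Theta^*_n$ contain no $0$ (by their form in Theorem~\ref{HardTautologiesFLe}), one works on the $0$-free fragment, rewriting every $0$ as $1$ via $0\Leftrightarrow 1$ before translating, so that no instance of $\top\to\bot$ is ever produced and the rule-by-rule translation again lands in $\mathsf{IPC}$. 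Making this reduction precise at the level of the rules of the given $\mathsf{L}-\mathbf{F}$ (resp. $\mathsf{L}-\mathbf{EF}$) system, so that the simulation of Theorem~\ref{ThmFLBPC-EF} still goes through, is the main obstacle; everything else is bookkeeping.
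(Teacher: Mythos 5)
Your overall route is exactly the paper's (implicit) one: Corollary \ref{CorKolli} is stated as a direct instantiation of Theorem \ref{ThmFLBPC-EF}, with $\mathsf{M}=\mathsf{IPC}$ as the canonical witness, and your verifications for $\mathsf{FL_S}$, for the logics of $\mathbf{CyFL}$, $\mathbf{DFL}$, $\mathbf{P_nFL}$, $\mathbf{psBL}$, $\mathbf{HA}$, and for all of the second bullet are correct and complete.

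The obstacle you flag for the $\mathbf{RL}$-based entries is genuine, and it is in fact a gap in the paper as literally stated, not an artifact of your write-up: in $\mathbf{RL}$ the initial sequents $\Rightarrow 1$ and $1 \Rightarrow 0$ yield $\Rightarrow 0$ by cut, so $0 \in \mathsf{RL}$; since $0^t=\bot$, the hypothesis $\mathsf{L} \subseteq^t \mathsf{M}$ of Theorem \ref{ThmFLBPC-EF}$(i)$ would force $\bot \in \mathsf{M}$, and the inconsistent logic is contained neither in $\mathsf{BD_2}$ nor in $\mathsf{KC+BD_3}$, hence has finite branching by Theorem \ref{ThmInfiniteBranching}. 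So the theorem does not apply verbatim to $\mathsf{RL}$, $\mathsf{DRL}$, $\mathsf{IRL}$, $\mathsf{CRL}$, $\mathsf{GBH}$, $\mathsf{Br}$. Your repair (identify $0$ with $1$ before translating) is the right one, but you leave it as an unresolved obstacle, and it does close: define $t'$ exactly as $t$ in Definition \ref{DfnTranslation} except $0^{t'}=\top$. Then $\mathsf{L}\subseteq^{t'}\mathsf{IPC}$ for these six logics, proved by induction on derivations provided you translate each sequent through the paper's interpretation $I$ rather than componentwise: the $\mathbf{FL}$ initial sequent $0\Rightarrow$ then becomes $(0\setminus 0)^{t'}=\top\to\top$, and every empty-succedent sequent becomes an implication into $\top$; this is harmless because these six calculi are $\mathbf{FL}$ plus initial sequents (no structural rules), where the only rule passing from an empty-succedent premise to a nonempty succedent is $(0w)$, whose conclusion again $t'$-interprets trivially. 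The same induction (or Theorem \ref{ThmParametrizedDeduction}) gives the analogue of Remark \ref{RemLM} for $t'$. Finally, the proofs of Theorems \ref{MainTheorem} and \ref{ThmFLBPC-EF} use nothing about $t$ beyond Remark \ref{RemLM}, the translation of the extension axiom, $|\phi^t|=|\phi|$, and a short proof of $(\Theta^*_n)^t\to\Theta_n$; since $\Theta^*_n$ is $0$-free by Theorem \ref{HardTautologiesFLe}, we have $(\Theta^*_n)^{t'}=(\Theta^*_n)^t$, so everything survives with $t'$ in place of $t$ and the lower bound for the $\mathbf{RL}$-family follows. With this supplement your proposal is complete; without it, neither your argument nor the paper's literal citation of Theorem \ref{ThmFLBPC-EF} covers roughly half of Table \ref{table:pekh}.
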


\section{The lower bound for sequent calculi} \label{SectionSequentCalculus}
So far, we have provided a lower bound for proof systems for logics as least as strong as $\mathsf{FL}$ and polynomially simulated by an extended Frege system for a super-intuitionistic logic of infinite branching. It is very desirable to see if the lower bound also applies to proof systems for logics outside this range, for instance their classical counterparts. The result in this section is an attempt in this direction and we reach a positive answer for any proof system polynomially weaker than $\mathbf{CFL^-_{ew}}$, which is the system $\mathbf{CFL_{ew}}$ without the cut rule. For that matter, we first transfer the lower bound from the previous section to the sequent-style proof system $\mathbf{FL_S}$ for any $S \subseteq \{e, c, i, o\}$. Then we use the observation that any cut-free proof of a single-conclusion sequent in the $0$-free fragment of $\mathbf{CFL_{ew}}$ is also an $\mathbf{FL_{ew}}$-proof.

\begin{dfn} 
In any of the sequent calculi defined so far, a \emph{line} in a proof is a sequent of the form $\Gamma \Rightarrow \Delta$. We denote the number of proof-lines in a proof $\pi$ in a sequent calculus by $\mathbf{\lambda}(\pi)$. It is obvious that the number of proof-lines of a sequent is less than or equal to the length of the proof, i.e., the number of symbols in the proof. 
\end{dfn}

\begin{thm} \label{ThmSequentCalculusFLToFrege}
Let $S$ be a subset of $\{e,c,i,o\}$. There exists a Frege system $\mathbf{P}$ for the sequent calculus $\mathbf{FL_S}$ such that for any sequence of formulas $\Gamma=\gamma_1, \ldots, \gamma_m$ and any formula $A$, if $\mathbf{FL_S} \vdash^{\pi} \Gamma \Rightarrow A$, then 
\[
\mathbf{P} \vdash^{\pi'} \bigast_{i=1}^m \gamma_i \setminus A
\]
and $\lambda (\pi') = \lambda (\pi)$.
\end{thm}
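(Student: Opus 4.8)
The plan is to build $\mathbf{P}$ by interpreting the sequent calculus $\mathbf{FL_S}$ rule-by-rule, exactly as the Frege system for $\mathsf{BPC}$ was built from $\mathbf{BPC}$ in Theorem \ref{ThmFregeForBPC}. Concretely, I would take each rule scheme of $\mathbf{FL_S}$, replace every context variable ($\Gamma,\Sigma,\Pi,\Lambda$) by a single fresh formula variable standing for the fusion $\bigast$ of that context, and record the interpretations $I(\cdot)$ of the premises and the conclusion. The resulting schemes
\[
\frac{I(\Gamma_1\Rightarrow\Delta_1)\quad\cdots\quad I(\Gamma_k\Rightarrow\Delta_k)}{I(\Gamma\Rightarrow\Delta)}
\]
(one finite family, with separate instances for the empty-succedent case, where $I(\Gamma\Rightarrow)=\bigast\Gamma\setminus 0$), together with the modus ponens rule (from $\phi$ and $\phi\setminus\psi$ infer $\psi$), constitute $\mathbf{P}$. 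Since $\mathbf{FL_S}$ has finitely many schemes, so does $\mathbf{P}$ (condition $(1)$).

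The technical core is a single interderivability lemma: for every sequent, $\Gamma\Rightarrow\Delta$ and $\Rightarrow I(\Gamma\Rightarrow\Delta)$ are mutually derivable in $\mathbf{FL_S}$. The forward direction collapses $\Gamma$ into $\bigast\Gamma$ by $(L*)$ and then applies $(R\setminus)$ (using $(0w)$ first when $\Delta$ is empty); the backward direction recovers $\Gamma\Rightarrow\Delta$ from $\Rightarrow\bigast\Gamma\setminus A$ via identity axioms, $(L\setminus)$, $(R*)$ and two applications of cut, all available in $\mathbf{FL_S}$. With this lemma I obtain the $\mathsf{FL_S}$-standardness of every interpreted rule uniformly, with no case analysis: given the interpreted premises as extra initial sequents $\Rightarrow I(P_i)$, I recover each $P_i$, apply the original $\mathbf{FL_S}$ rule to reach the conclusion sequent $C$, and then derive $\Rightarrow I(C)$. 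This yields condition $(4)$ and hence $(2)$; modus ponens is $\mathsf{FL_S}$-standard as well, since $\phi,\phi\setminus\psi\vdash_{\mathsf{FL_S}}\psi$ follows from $(L\setminus)$ and cut.

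For strong completeness (condition $(3)$) I would first prove the unit-conversions $\phi\vdash_{\mathbf{P}}1\setminus\phi$ and $1\setminus\phi\vdash_{\mathbf{P}}\phi$: the sequents $\phi\Rightarrow 1\setminus\phi$ and $1\setminus\phi\Rightarrow\phi$ are $\mathbf{FL_S}$-provable, and translating their proofs line-by-line gives the $\mathbf{P}$-theorems $\phi\setminus(1\setminus\phi)$ and $(1\setminus\phi)\setminus\phi$ (these carry no spurious unit, the antecedents being nonempty), whence modus ponens finishes. Then, given $\phi_1,\dots,\phi_n\vdash_{\mathsf{FL_S}}\phi$, I take the witnessing $\mathbf{FL_S}$-derivation of $\Rightarrow\phi$ from the initial sequents $\Rightarrow\phi_i$, convert each hypothesis $\phi_i$ to $1\setminus\phi_i=I(\Rightarrow\phi_i)$, translate the derivation line-by-line (the interpreted cut rule supplying the needed compositions) to reach $1\setminus\phi$, and convert back to $\phi$. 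Thus $\mathbf{P}$ is a Frege system for $\mathsf{FL_S}$.

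Finally, the statement itself is the pure line-by-line translation: given $\mathbf{FL_S}\vdash^\pi\Gamma\Rightarrow A$ with $\pi=S_1,\dots,S_N$, set $\pi'=I(S_1),\dots,I(S_N)$. Each initial sequent maps to its interpretation, which is an axiom (premise-free rule) of $\mathbf{P}$, and each rule application maps to the corresponding interpreted rule of $\mathbf{P}$; hence $\pi'$ is a $\mathbf{P}$-proof of $I(\Gamma\Rightarrow A)=\bigast_{i=1}^m\gamma_i\setminus A$ with $\lambda(\pi')=N=\lambda(\pi)$. The main obstacle I anticipate is not any individual rule but the bookkeeping needed to make the interpreted schemes instantiate to exactly the right formulas: because $\bigast$ is associative and unital only up to $\mathsf{FL}$-provable equivalence, the honest interpretation $I(\Gamma,\phi,\Sigma\Rightarrow\Delta)$ of an actual sequent and the instantiation of the corresponding interpreted scheme agree only modulo reassociation of $*$ and the collapse of empty contexts, whose fusion is the unit $1$. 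I would handle this by fixing one association convention for $\bigast$ and reading the schemes with the empty-product-is-absent convention, absorbing these $\mathsf{FL}$-level laws into the rule schemes, exactly as the associativity of $\wedge$ is treated tacitly in Theorem \ref{ThmFregeForBPC}, so that the translation introduces no extra proof-lines and the identity $\lambda(\pi')=\lambda(\pi)$ holds on the nose.
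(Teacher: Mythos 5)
Your construction is, at its core, the same as the paper's: interpret every initial sequent and rule scheme of $\mathbf{FL_S}$ as a Frege rule on the interpretations $I(\cdot)$, verify $\mathsf{FL_S}$-standardness through the mutual derivability of a sequent $T$ and $\Rightarrow I(T)$ inside $\mathbf{FL_S}$ (via cut and $\mathbf{FL_S}\vdash \Gamma, \bigast\Gamma\setminus A \Rightarrow A$), and obtain $\pi'$ from $\pi$ by the line-by-line translation $T_j \mapsto I(T_j)$, giving $\lambda(\pi')=\lambda(\pi)$. The genuine difference is how condition $(3)$ of Definition \ref{DfnFrege} is discharged. The paper never derives strong completeness from the interpreted rules: it fixes a pre-existing Frege system $\mathbf{Q}$ for $\mathsf{FL_S}$ (e.g.\ the Hilbert-style system $\mathbf{HFL_S}$ of \cite{Ono}) and defines $\mathbf{P}$ to be $\mathbf{Q}$ \emph{together with} the interpreted rules, so completeness is inherited from $\mathbf{Q}$ for free and only standardness of the new rules needs checking. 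You instead keep $\mathbf{P}$ lean (interpreted rules plus modus ponens for $\setminus$) and prove completeness by hand, via the unit conversions $\phi \dashv\vdash_{\mathbf{P}} 1\setminus\phi$ and a translation of the witnessing sequent derivation. That route works, but note one step you take for granted: by Definition \ref{DfnSubstructuralLogic}, the hypothesis $\phi_1,\ldots,\phi_n\vdash_{\mathsf{FL_S}}\phi$ is witnessed by an $\mathbf{FL}$-derivation from the initial sequents $\Rightarrow\phi_i$ \emph{and} $\Rightarrow\theta$ for theorems $\theta\in\mathsf{FL_S}$; to get the ``witnessing $\mathbf{FL_S}$-derivation from $\Rightarrow\phi_1,\ldots,\Rightarrow\phi_n$'' that you translate, you must first splice in $\mathbf{FL_S}$-proofs of those finitely many $\theta$'s. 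In exchange for this extra verification, your system is self-contained and does not lean on the external existence of Hilbert-style axiomatizations, which is what the paper's shorter proof buys itself by importing $\mathbf{Q}$.

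On the bookkeeping issue you flag at the end: it is real, and you deserve credit for noticing it. Once contexts are collapsed to single formula variables, the honest interpretation $I(T_j)$ of a concrete sequent need not be a literal substitution instance of the interpreted scheme --- the bracketing of $\bigast\Gamma$ forced by one rule application can disagree with the bracketing forced by the next, and empty contexts produce units the schemes do not display. The paper's proof (and likewise the proof of Theorem \ref{ThmFregeForBPC}) passes over this in silence, so it is not a gap of yours relative to the paper; but your proposed repair (``absorbing the $\mathsf{FL}$-level laws into the rule schemes'') is the vaguest part of the write-up. A fully rigorous fix would interpose explicit reassociation steps, which relaxes the conclusion from $\lambda(\pi')=\lambda(\pi)$ to $\lambda(\pi')$ bounded polynomially in the length of $\pi$ --- still sufficient for the exponential lower bounds this theorem is used to transfer, though stated for lengths rather than line counts.
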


\begin{proof}
The proof is similar to the proof of Theorem \ref{ThmFregeForBPC}. As noted in the discussion after Definition \ref{DfnSubstructuralLogic} (when $\Sigma$ is empty), since $\mathbf{FL_S} \vdash \Gamma \Rightarrow A$, we have $\Gamma \vdash_{\mathsf{FL_S}} A$. Therefore, for any Frege system $\mathbf{Q}$ for the logic $\mathsf{FL_S}$, by strong completeness in Definition \ref{DfnFrege}, we have $\Gamma \vdash_{\mathbf{Q}} A$. Fix such $\mathbf{Q}$. The method is developing a Frege system $\mathbf{P}$ for $\mathsf{FL_S}$ by transforming all the axioms and rules of the sequent calculus $\mathbf{FL_S}$ to Frege rules in the new system. For the sake of completeness, we also add $\mathbf{Q}$ to the resulting system.\\
Recall that for $\Gamma=\emptyset$, the formula $\bigast \Gamma$ is defined as $1$ and for any single-conclusion sequent $T= (\Gamma \Rightarrow \Delta)$ by $I(T)$, i.e., the interpretation of the sequent $T$, we mean $\bigast \Gamma \setminus \Delta$, if $\Delta$ is non-empty, and $\bigast \Gamma \setminus 0$ when $\Delta=\emptyset$. Now, define $\mathbf{P}$ as the system consisting of the rules of $\mathbf{Q}$ plus the following rules: for the axiom $T$ in the sequent calculus $\mathbf{FL_S}$ add
\begin{center}
  	\begin{tabular}{c}
  		\AxiomC{}
	  	\UnaryInfC{$I(T)$}
  		\DisplayProof
\end{tabular}
\end{center}
and for any rule in the sequent calculus $\mathbf{FL_S}$ of the form
\begin{center}
  	\begin{tabular}{c}
  		\AxiomC{$T_1$}
  		\AxiomC{$\ldots$}
  		\AxiomC{$T_m$}
	  	\TrinaryInfC{$T$}
  		\DisplayProof
\end{tabular}
\end{center}
add the following rule
\begin{center}
  	\begin{tabular}{c}
  		\AxiomC{$I(T_1)$}
  		\AxiomC{$\ldots$}
  		\AxiomC{$I(T_m)$}
	  	\TrinaryInfC{$I(T)$}
  		\DisplayProof
\end{tabular}
\end{center}
where $m=1$ or $m=2$. We have to show that $\mathbf{P}$ is a Frege system for the logic $\mathsf{FL_S}$. First, since $\mathbf{Q}$ is strongly complete wrt $\mathsf{FL_S}$, then so is $\mathbf{P}$. Now, we have to show that the new rules are standard wrt $\mathsf{FL_S}$, i.e., for any rule of the form
\begin{center}
  	\begin{tabular}{c}
  		\AxiomC{$I(T_1)$}
  		\AxiomC{$\ldots$}
  		\AxiomC{$I(T_m)$}
	  	\TrinaryInfC{$I(T)$}
  		\DisplayProof
\end{tabular}
\end{center}
in $\mathbf{P}$ we have to show that $I(T_1), \ldots , I(T_m) \vdash_{\mathsf{FL_S}}I(T)$. For that matter, note that since the sequent calculus $\mathbf{FL_S}$ has the cut rule, we have $\Rightarrow I(T_i) \vdash_{\mathbf{FL_S}} T_i$ using
\[
\mathbf{FL_S} \vdash \Gamma, \bigast \Gamma \setminus A \Rightarrow A.
\]
Now, use the corresponding rule, $T_1, \ldots, T_m \vdash_{\mathbf{FL_S}} T$, the fact that $T \vdash_{\mathbf{FL_S}} \Rightarrow I(T)$, and the cut rule to show  $\Rightarrow I(T_1), \ldots, \Rightarrow I(T_m) \vdash_{\mathbf{FL_S}} \Rightarrow I(T)$. Therefore, again by the discussion after Definition \ref{DfnSubstructuralLogic} (where $\Sigma= I(T_1), \ldots , I(T_m)$ and $\Gamma$ is empty), we have $I(T_1), \ldots , I(T_m) \vdash_{\mathsf{FL_S}} I(T)$. Hence, $\mathbf{P}$ is a Frege system for $\mathsf{FL_S}$. \\
For the number of proof-lines, note that if $\pi= T_1, \ldots, T_n$ is a proof for $T_n=(\Gamma \Rightarrow A)$ in $\mathbf{FL_S}$, then it is easy to see that $I(T_1), \ldots, I(T_n)$ will be a proof for $\bigast_{i=1}^m \gamma_i \setminus A$ in $\mathbf{P}$. Therefore, $\lambda(\pi') = \lambda(\pi)$.
\end{proof}

\begin{cor}
For any $S \subseteq \{e, i, o, c\}$, we have $\mathbf{FL_S} \vdash \, \Rightarrow \Theta^*_n$ and the number of lines of any proof of this sequent is exponential in $n$.
\end{cor}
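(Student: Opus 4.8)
The provability part is immediate. By Corollary \ref{CorThetaFL} we have $\mathsf{FL} \vdash \Theta^{*}_n$, i.e.\ $\mathbf{FL} \vdash \, \Rightarrow \Theta^{*}_n$. Since $\mathbf{FL} = \mathbf{FL}_{\emptyset}$ and $\mathbf{FL_S}$ is obtained from it merely by \emph{adding} the structural rules in $S$, every initial sequent and rule of $\mathbf{FL}$ is available in $\mathbf{FL_S}$; hence any $\mathbf{FL}$-proof is verbatim an $\mathbf{FL_S}$-proof, and $\mathbf{FL_S} \vdash \, \Rightarrow \Theta^{*}_n$.

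For the lower bound, the plan is to route any $\mathbf{FL_S}$-proof of $\Rightarrow \Theta^{*}_n$ through the line-preserving translation of Theorem \ref{ThmSequentCalculusFLToFrege} into a Frege system, and then appeal to Corollary \ref{CorKolli}. Suppose $\mathbf{FL_S} \vdash^{\pi} \, \Rightarrow \Theta^{*}_n$. Applying Theorem \ref{ThmSequentCalculusFLToFrege} with $\Gamma = \emptyset$ (so that $\bigast \Gamma = 1$) and $A = \Theta^{*}_n$ produces a Frege system $\mathbf{P}$ for $\mathsf{FL_S}$ together with a proof $\pi'$ satisfying $\mathbf{P} \vdash^{\pi'} 1 \setminus \Theta^{*}_n$ and $\lambda(\pi') = \lambda(\pi)$. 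Now $1 \setminus \phi$ and $\phi$ are provably equivalent in $\mathsf{FL}$ (the unit law for fusion: from $\Rightarrow \phi$ one gets $\Rightarrow 1 \setminus \phi$ by $(1w)$ and $(R\setminus)$, and conversely $1 \setminus \phi \Rightarrow \phi$ by $(L\setminus)$ against $\Rightarrow 1$, followed by $(cut)$). Since $\mathsf{FL} \subseteq \mathsf{FL_S}$ and this equivalence is a fixed \emph{schema} $1 \setminus p \vdash_{\mathsf{FL_S}} p$, it has a $\mathbf{P}$-derivation of some constant length $d$ independent of $n$; substituting $\Theta^{*}_n$ for $p$ and appending this to $\pi'$ yields a $\mathbf{P}$-proof of $\Theta^{*}_n$ with $\lambda(\pi) + d$ lines.

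Finally, by Lemma \ref{LemEquivalenceOfEveryFrege} every Frege system for the substructural logic $\mathsf{FL_S}$ is equivalent, up to a multiplicative constant $c$ in the number of proof-lines, to the fixed system $\mathsf{FL_S}-\mathbf{F}$; hence $\Theta^{*}_n$ admits an $\mathsf{FL_S}-\mathbf{F}$-proof with at most $c(\lambda(\pi) + d)$ lines. Corollary \ref{CorKolli} forces any such proof to have a number of lines exponential in $n$, so $c(\lambda(\pi)+d)$ is exponential, whence $\lambda(\pi) = \lambda(\pi')$ is itself exponential in $n$. The only point demanding care is the bookkeeping in this last step: one must confirm that both the overhead $d$ from the interderivability $1 \setminus \Theta^{*}_n$ and $\Theta^{*}_n$ and the factor $c$ from Lemma \ref{LemEquivalenceOfEveryFrege} are genuinely independent of $n$ (being determined by a single fixed schema and by the finitely many rules of $\mathbf{P}$, respectively), so that a subexponential $\lambda(\pi)$ would yield a subexponential $\mathsf{FL_S}-\mathbf{F}$-proof of $\Theta^{*}_n$, contradicting Corollary \ref{CorKolli}. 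As both are constants, no genuine obstacle arises and the claimed exponential lower bound on $\lambda(\pi)$ follows.
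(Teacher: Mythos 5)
Your proof is correct and follows essentially the same route the paper intends: provability via Corollary \ref{CorThetaFL} plus the fact that $\mathbf{FL_S}$ extends $\mathbf{FL}$, and the lower bound by feeding the line-preserving translation of Theorem \ref{ThmSequentCalculusFLToFrege} (with $\Gamma = \emptyset$, yielding $1 \setminus \Theta^*_n$) into the Frege lower bound of Corollary \ref{CorKolli}, with Lemma \ref{LemEquivalenceOfEveryFrege} and the constant-size interderivability of $1 \setminus \Theta^*_n$ and $\Theta^*_n$ handling the bookkeeping. The paper states this corollary without proof, and your write-up supplies exactly the intended chain of references with the details checked.
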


By a $0$-free formula in $\mathsf{CFL_{ew}}$, we mean a formula only consisting of propositional variables, the constant $1$, and the connectives $\{\wedge, \vee, \to, *\}$.

\begin{lem} \label{EmptyConclusionNotProvable}
If $\Gamma$ is a sequence of $0$-free formulas, then $\mathbf{CFL_{ew}^-} \nvdash \Gamma \Rightarrow $.
\end{lem}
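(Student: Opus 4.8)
The plan is to argue by induction on the height of a cut-free derivation, establishing the following invariant: no sequent of the form $\Gamma \Rightarrow$ (empty succedent) in which every formula of $\Gamma$ is $0$-free admits a $\mathbf{CFL_{ew}^-}$-proof. I would assume toward a contradiction that such a proof exists and examine the last rule applied, using that subformulas of $0$-free formulas are again $0$-free.

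For the base case, I would note that among the initial sequents of $\mathbf{CFL_{ew}}$ the only one with an empty succedent is $0 \Rightarrow$, whose antecedent is the constant $0$ and is therefore not $0$-free. (The initial sequents for $\bot$ and $\top$ are absent from $\mathbf{CFL_S}$, while $\phi \Rightarrow \phi$ and $\Rightarrow 1$ both have non-empty succedents.) Hence no $0$-free empty-succedent sequent can be an axiom.

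For the inductive step, the key observation is that every rule whose conclusion can have an empty succedent must have a premise that is itself a $0$-free sequent with empty succedent, to which the induction hypothesis then applies. I would run through the rules in two groups. The right rules $(Rw)$, $(Re)$, $(0w)$, $(R\wedge)$, $(R\vee_1)$, $(R\vee_2)$, $(R*)$, $(R\to)$ each place a formula into the succedent of the conclusion, so their conclusions are never empty on the right and are vacuously excluded; in particular $(0w)$ puts the offending constant $0$ on the right, never into the antecedent. The antecedent rules $(Lw)$, $(Le)$, $(1w)$, $(L\wedge_1)$, $(L\wedge_2)$, $(L\vee)$, $(L*)$ leave the succedent empty in a premise while their action on the antecedent only permutes or deletes $0$-free formulas (the structural and $(1w)$ cases) or replaces a $0$-free principal formula by its subformulas (the left logical cases), so $0$-freeness of the antecedent is preserved and the induction hypothesis yields the contradiction.

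The one case that genuinely requires care — the main obstacle — is the left implication rule $(L\to)$, since it is the only antecedent rule that moves material into the succedent of a premise. When its conclusion $\Pi, \Gamma', \phi\to\psi, \Sigma' \Rightarrow \Delta, \Lambda$ has empty succedent, both $\Delta$ and $\Lambda$ must be empty, so the premises are $\Gamma' \Rightarrow \phi$ and $\Pi, \psi, \Sigma' \Rightarrow$. The second premise has empty succedent, and since $\phi\to\psi$ is $0$-free its subformula $\psi$ is $0$-free while $\Pi,\Sigma'$ come from the $0$-free antecedent; thus it is again a $0$-free empty-succedent sequent and the induction hypothesis applies to it, closing the argument. (One may optionally remark that the statement even holds with cut: interpreting every variable as $\top$ in the two-element Boolean algebra, which is an $\mathbf{FL_{ew}}$-algebra, forces every $0$-free formula to evaluate to $\top$, so $I(\Gamma \Rightarrow) = \bigast\Gamma \to 0$ evaluates to $\top \to \bot = \bot$, and soundness rules out provability; the cut-free formulation is all that is needed for the intended application.)
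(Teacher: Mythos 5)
Your proof is correct and follows essentially the same route as the paper's: both arguments rest on the observations that no $0$-free initial sequent has an empty succedent and that every rule of $\mathbf{CFL_{ew}^-}$ whose conclusion has an empty succedent --- the only delicate case being $(L\to)$ --- has a premise that is again a $0$-free sequent with empty succedent; you merely package this as an explicit induction on derivation height, where the paper phrases it as a regress yielding a contradiction. Your parenthetical Boolean-valuation remark is also sound and in fact establishes the stronger claim that the sequent is unprovable even with cut, but as you note it is not needed for the intended application.
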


\begin{proof}
Suppose $(\Gamma \Rightarrow)$ has a proof in $\mathbf{CFL_{ew}^-}$. Since the proof is cut-free and $\Gamma$ is $0$-free, by the subformula property of $\mathbf{CFL_{ew}^-}$, the whole proof is also $0$-free. Therefore, there is no axiom in the proof with an empty succedent, because such an axiom must be in the form $(0 \Rightarrow)$, which is not $0$-free. Moreover, if the succedent of the conclusion of any rule is empty, then the succedent of at least one of its premises must be empty, as well. The reason is the following. First, note that the last rule is not an axiom, as stated. It cannot be a right rule either, because they always have at least one formula in the succedent of their conclusion. And for the left rules, the claim is evident by a simple case checking. The only non-trivial case to check is $(L \to)$ which also has such a premise:
\begin{center}
  	\begin{tabular}{c}
  			  	\AxiomC{$\Phi \Rightarrow \phi  $}
	  	\AxiomC{$\Pi, \psi, \Sigma \Rightarrow $}
	  	\RightLabel{($L \to$)}
	  	\BinaryInfC{$\Pi, \Phi, \phi \to \psi , \Sigma \Rightarrow$}
  		\DisplayProof
\end{tabular}
\end{center}
Therefore, any sequent in the proof with an empty succedent has also a premise with an empty succedent. This is clearly a contradiction.
\end{proof}

The following theorem, which is of independent interest, states that for $0$-free formulas, a cut-free proof for a single-conclusion sequent in $\mathbf{CFL_{ew}}$ is also a proof for the same sequent in $\mathbf{FL_{ew}}$. 

\begin{thm} \label{PositiveFormulasCLFe}
Suppose $\Gamma$ is a sequence of $0$-free formulas and $A$ is a $0$-free formula. Then any proof $\pi$ for $\Gamma \Rightarrow A$ in $\mathbf{CFL_{ew}^-}$ is also a proof in $\mathbf{FL_{ew}}$.
\end{thm}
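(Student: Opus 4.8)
The plan is to show that every sequent occurring in the cut-free proof $\pi$ is in fact single-conclusion, i.e. has exactly one formula in its succedent. Once this is established, each rule application in $\pi$ is automatically a single-conclusion instance of one of the commutative multi-conclusion rules, and by Remark \ref{RemImplication} (using $e \in \{e,w\}$, so that $\to$ may be identified with $\setminus$) these are precisely the rules of $\mathbf{FL_{ew}}$; moreover the only $0$-free initial sequents available, $\phi \Rightarrow \phi$ and $\Rightarrow 1$, are also initial sequents of $\mathbf{FL_{ew}}$. Since a cut-free derivation is in particular a derivation in $\mathbf{FL_{ew}}$ (which merely has cut available but need not use it), this yields that $\pi$ is an $\mathbf{FL_{ew}}$-proof.

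To prove that all sequents are single-conclusion, I would first invoke the subformula property of $\mathbf{CFL_{ew}^-}$: since $\pi$ is cut-free and the endsequent $\Gamma \Rightarrow A$ is $0$-free, every formula occurring anywhere in $\pi$ is a subformula of a $0$-free formula and hence $0$-free. Consequently every sequent in $\pi$ has a $0$-free antecedent, and since each such sequent is itself provable in $\mathbf{CFL_{ew}^-}$, Lemma \ref{EmptyConclusionNotProvable} guarantees that none of them has an empty succedent. Thus every sequent in $\pi$ has succedent cardinality at least $1$. It then remains to push this cardinality down to exactly $1$, which I would do by induction on the height of $\pi$: the endsequent has cardinality $1$, and for each rule I would check that a conclusion of cardinality $1$, together with the standing lower bound of $1$ on the cardinality of the premises, forces the premises to have cardinality $1$ as well; applying the induction hypothesis to the immediate subproofs then finishes the argument.

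The heart of the matter, and the main obstacle, is the case analysis in this induction, where the only delicate rules are those that do not preserve succedent cardinality. Every logical and structural rule of $\mathbf{CFL_{ew}}$ except $(Rw)$ and $(L\to)$ leaves the number of succedent formulas unchanged (note that contraction is absent, since $c \notin \{e,w\}$, and that $(0w)$ cannot occur as it would introduce the non-$0$-free constant $0$), so for those rules a cardinality-$1$ conclusion immediately forces the displayed side-contexts $\Delta,\Lambda$ to be empty and hence yields cardinality-$1$ premises. For $(Rw)$, a cardinality-$1$ conclusion would require a premise with empty succedent, which is excluded by Lemma \ref{EmptyConclusionNotProvable}; hence $(Rw)$ never produces a single-conclusion sequent, and more precisely never occurs at all once all sequents are known to be single-conclusion. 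For $(L\to)$, whose conclusion succedent $\Delta,\Lambda$ is split between the two premises $\Gamma \Rightarrow \phi,\Lambda$ and $\Pi,\psi,\Sigma \Rightarrow \Delta$, the constraint $|\Delta|+|\Lambda| = 1$ combined with $|\Delta| \ge 1$ (again from Lemma \ref{EmptyConclusionNotProvable} applied to the right premise) forces $|\Delta| = 1$ and $|\Lambda| = 0$, so both premises again have cardinality exactly $1$. This is exactly the point at which the non-emptiness of succedents is indispensable: it is what prevents a right weakening from being hidden inside the proof, and what pins down the context split in $(L\to)$.
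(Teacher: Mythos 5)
Your proposal is correct and takes essentially the same route as the paper's proof: cut-freeness plus the subformula property give $0$-freeness, Lemma \ref{EmptyConclusionNotProvable} rules out empty succedents, and an induction over the proof propagates single-conclusion-ness upward, with $(L\to)$ as the only delicate context split. If anything, your explicit argument that $(Rw)$ can never occur is slightly more careful than the paper, which silently lumps $(Rw)$ with the cardinality-preserving rules even though its premise would have an empty succedent.
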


\begin{proof}
The sketch of the proof is the following: suppose $\pi$ is a cut-free proof in $\mathbf{CFL_{ew}}$. Since $\Gamma \Rightarrow A$ is $0$-free and the proof is cut-free, then all the formulas in the proof must be $0$-free. Then, throughout the proof the number of formulas in the succedent of the sequents does not decrease. The reason lies in the fact that neither the cut rule nor the contraction rules are present. Hence, in the special case that the sequent is also single-conclusion, the succedents of all the sequents in the whole proof will contain exactly one formula. Therefore, the proof is in $\mathbf{FL_{ew}}$.\\
Now, we state the proof extensively. Let $\pi$ be a proof for $\Gamma \Rightarrow A$ in $\mathbf{CFL_{ew}^-}$. By induction on the structure of $\pi$ we will show it is also a proof for the same sequent in $\mathbf{FL_{ew}}$. As stated in the proof of Lemma \ref{EmptyConclusionNotProvable}, every formula in the proof must be $0$-free.\\
If $\Gamma \Rightarrow A$ is an instance of an axiom in $\mathbf{CFL_{ew}^-}$, then it is either $\Rightarrow 1$ or an instance of the axiom $\phi \Rightarrow \phi$, which are both also axioms in the sequent calculus $\mathbf{FL_e}$.
For the induction step, note that the last rule in the proof cannot be $(0 w)$. For all the other rules (except for the rule $(L \to)$), it is easy to see that since the conclusion of the rule is single-conclusion, then every premise must also be single-conclusion. 
It remains to investigate the case where the last rule used in the proof is $(L \to)$:

\begin{center}
 \begin{tabular}{c}
\AxiomC{$\pi_1$} 
	  	 \noLine
 \UnaryInfC{$\Phi \Rightarrow \phi, \Lambda$}
\AxiomC{$\pi_2$} 
	  	 \noLine
 \UnaryInfC{$\Pi, \psi, \Sigma \Rightarrow \Delta$}
 \RightLabel{$(L \to)$} 
 \BinaryInfC{$\Pi, \Phi, \phi \to \psi, \Sigma \Rightarrow \Delta, \Lambda$}
 \DisplayProof
\end{tabular}
\end{center}
There are two possibilities; either $\Lambda$ is empty and $\Delta$ is equal to $A$

\begin{center}
 \begin{tabular}{c}
\AxiomC{$\pi_1$} 
	  	 \noLine
 \UnaryInfC{$\Phi \Rightarrow \phi$}
\AxiomC{$\pi_2$} 
	  	 \noLine
 \UnaryInfC{$\Pi, \psi, \Sigma \Rightarrow A$}
 \RightLabel{$(L \to)$} 
 \BinaryInfC{$\Pi, \Phi, \phi \to \psi, \Sigma \Rightarrow A$}
 \DisplayProof
\end{tabular}
\end{center}
or $\Delta$ is empty and $\Lambda$ is equal to $A$

\begin{center}
 \begin{tabular}{c}
\AxiomC{$\pi_1$} 
	  	 \noLine
 \UnaryInfC{$\Phi \Rightarrow \phi, A$}
\AxiomC{$\pi_2$} 
	  	 \noLine
 \UnaryInfC{$\Pi, \psi, \Sigma \Rightarrow $}
 \RightLabel{$(L \to)$} 
 \BinaryInfC{$\Pi, \Phi, \phi \to \psi, \Sigma \Rightarrow A$}
 \DisplayProof
\end{tabular}
\end{center}
In the former since both premises are single-conclusion, by induction hypothesis, $\pi_1$ and $\pi_2$ are proofs in $\mathbf{FL_{ew}}$ and by applying the rule $(L \to)$ we obtain a proof for $\Gamma \Rightarrow A$. On the other hand, the latter cannot happen since the right premise is of the form $\Pi, \psi, \Sigma \Rightarrow$ and the antecedent of this sequent is $0$-free. Therefore, Lemma \ref{EmptyConclusionNotProvable} implies that it is not provable in $\mathbf{CFL_{ew}^-}$.
\end{proof}

\begin{thm} \label{ThmLowerBoundCFLe-}
The formulas
\[
\tilde{\Theta}^{*}_{n,k}:=  [\bigast_{i,j} ((p_{i,j} \wedge 1) \vee (q_{i,j} \wedge 1))] \;\; \to
\]
\[
[\bigast_{i,l} ((s_{i,l} \wedge 1) \vee (s'_{i,l} \wedge 1)) \to \alpha^k_n (\bar{p}, \bar{s}, \bar{s'})]\; \vee \; [\bigast_{i,l}((r_{i,l} \wedge 1) \vee  (r'_{i,l} \wedge 1)) \to \beta^{k+1}_n (\bar{q}, \bar{r}, \bar{r'})].
\]
are provable in $\mathbf{CFL_{e}^-}$. Moreover, defining $\tilde{\Theta}^{*}_{n}=\tilde{\Theta}^{*}_{n,\lfloor \sqrt{n} \rfloor}$, every $\mathbf{CFL_{ew}^-}$-proof of $\tilde{\Theta}^{*}_n$ contains at least $2^{\Omega(n^{1/4})}$ proof-lines and hence has length exponential in terms of the length of $\tilde{\Theta}^{*}_n$.
\end{thm}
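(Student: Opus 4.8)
The plan is to dispatch the two assertions separately, reducing each to facts already in hand.

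\emph{Provability.} The formula $\tilde{\Theta}^{*}_{n,k}$ is obtained from $\Theta^{*}_{n,k}$ of Theorem \ref{HardTautologiesFLe} by writing the single implication $\to$ in place of each occurrence of $\setminus$. Corollary \ref{CorThetaFL} gives $\mathbf{FL} \vdash \, \Rightarrow \Theta^{*}_{n,k}$, and since $\mathbf{FL_e}$ is $\mathbf{FL}$ augmented by the exchange rule, the same derivation shows $\mathbf{FL_e} \vdash \, \Rightarrow \Theta^{*}_{n,k}$. By Remark \ref{RemImplication}, in the presence of exchange the divisions $\setminus$ and $/$ collapse to $\to$ and $\mathbf{FL_e}$ may be axiomatized over the commutative language; under this identification the sequent is exactly $\mathbf{FL_e} \vdash \, \Rightarrow \tilde{\Theta}^{*}_{n,k}$. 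Every single-conclusion rule instance is a special case of the corresponding multi-conclusion rule, so this same derivation is already a $\mathbf{CFL_e}$-proof, whence $\mathbf{CFL_e} \vdash \, \Rightarrow \tilde{\Theta}^{*}_{n,k}$. Finally, by cut elimination for $\mathbf{CFL_e}$ we may take this derivation to be cut-free, giving $\mathbf{CFL_e^-} \vdash \, \Rightarrow \tilde{\Theta}^{*}_{n,k}$ for every $1 \leq k \leq n$; note that no weakening is ever used, which is why the conclusion lands in $\mathbf{CFL_e^-}$ rather than $\mathbf{CFL_{ew}^-}$.

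\emph{Lower bound.} Fix $k = \lfloor \sqrt{n} \rfloor$ and observe that $\tilde{\Theta}^{*}_n$ is $0$-free: it is built solely from propositional variables, the constant $1$, and the connectives $\wedge, \vee, \to, *$ (the subformulas $\alpha^k_n$ and $\beta^{k+1}_n$ are themselves implication- and negation-free). Let $\pi$ be any $\mathbf{CFL_{ew}^-}$-proof of the single-conclusion sequent $\Rightarrow \tilde{\Theta}^{*}_n$. Since the end-sequent is single-conclusion and $0$-free, Theorem \ref{PositiveFormulasCLFe} applies and $\pi$ is, verbatim, an $\mathbf{FL_{ew}}$-proof of $\Rightarrow \tilde{\Theta}^{*}_n$; in particular $\lambda(\pi)$ is unchanged. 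In $\mathbf{FL_{ew}}$ the connective $\to$ again coincides with $\setminus$, so $\tilde{\Theta}^{*}_n$ is literally $\Theta^{*}_n$ and $\pi$ is an $\mathbf{FL_{ew}}$-proof of $\Rightarrow \Theta^{*}_n$. The corollary following Theorem \ref{ThmSequentCalculusFLToFrege}, instantiated at $S = \{e, i, o\}$ so that $\mathbf{FL_S} = \mathbf{FL_{ew}}$, states that every such proof has at least $2^{\Omega(n^{1/4})}$ lines; hence $\lambda(\pi) \geq 2^{\Omega(n^{1/4})}$. As the number of proof-lines never exceeds the length, $|\pi|$ is likewise at least $2^{\Omega(n^{1/4})}$, and since $|\tilde{\Theta}^{*}_n|$ is polynomial in $n$ this is exponential in the length of $\tilde{\Theta}^{*}_n$.

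\emph{The main obstacle.} The conceptual heart of the argument is the descent from the classical cut-free calculus $\mathbf{CFL_{ew}^-}$ to the intuitionistic $\mathbf{FL_{ew}}$ furnished by Theorem \ref{PositiveFormulasCLFe}. This collapse is legitimate only because the proof is cut-free and its end-sequent is single-conclusion and $0$-free: with cut present, a $\mathbf{CFL_{ew}}$-derivation could exploit genuinely multi-conclusion succedents to shorten the argument, so the lower bound truly hinges on the cut-freeness of the system. Once this descent is secured, no new combinatorics are required — the exponential bound is simply inherited from the $\mathbf{FL_{ew}}$ bound through the line-preserving translation of Theorem \ref{ThmSequentCalculusFLToFrege} and the hardness result of Theorem \ref{JerabekAsli}.
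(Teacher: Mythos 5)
Your proposal is correct and follows essentially the same route as the paper's own proof: provability via lifting $\Theta^*_{n,k}$ from $\mathbf{FL}$ to $\mathbf{FL_e}/\mathbf{CFL_e}$, identifying $\setminus$ with $\to$ by Remark \ref{RemImplication}, and applying cut elimination; and the lower bound via the descent of Theorem \ref{PositiveFormulasCLFe} from $\mathbf{CFL_{ew}^-}$ to $\mathbf{FL_{ew}}$ followed by the line-count bound coming from Theorem \ref{ThmSequentCalculusFLToFrege} and its corollary. Your explicit instantiation $S=\{e,i,o\}$ and the remark on why cut-freeness is essential match the paper's reasoning (and its closing remark about $\mathbf{CFL_e}$ with cut) exactly.
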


\begin{proof}
Since formulas $\Theta^{*}_n$ are provable in $\mathbf{FL}$, by Theorem \ref{HardTautologiesFLe}, they are also provable in $\mathbf{FL_{e}}$, and hence in $\mathbf{CFL_{e}}$. However, since in $\mathbf{FL_{e}}$ and $\mathbf{CFL_{e}}$ the exchange rules are present, as stated in Preliminaries the connectives $\setminus$ and $/$ can be substituted by $\to$, see Remark \ref{RemImplication}. Therefore, the tautologies $\Theta^*_n$ will have the more recognizable form $\tilde{\Theta}^{*}_n$.
Using the cut elimination theorem for $\mathbf{CFL_{e}}$, formulas $\tilde{\Theta}^{*}_n$ are also provable in $\mathbf{CFL_{e}^-}$ and therefore also in $\mathbf{CFL_{ew}^-}$. By Theorem \ref{PositiveFormulasCLFe}, since $\tilde{\Theta}^{*}_n$ are $0$-free formulas, any cut-free proof for these formulas in $\mathbf{CFL_{ew}^-}$ is also a proof in $\mathbf{FL_{ew}}$. However, Theorem \ref{ThmSequentCalculusFLToFrege} guaranties that these proofs contain at least $2^{\Omega(n^{1/4})}$ proof-lines and hence the lengths of these proofs are exponential in terms of the length of $\tilde{\Theta}^{*}_n$. 
\end{proof}

\begin{rem}
So far, we do not have any method to extend the lower bound to the calculus $\mathbf{CFL_e}$, where the cut rule is present. Note that since there are no non-trivial lower bounds for the sequent calculus $\mathbf{LK}$, we can not use a similar argument as in the proof of Theorem \ref{HardTautologiesFLe}. 
\end{rem}

\begin{cor} 
For any proof system $\mathbf{P}$ such that $\mathbf{P}$ is at least as strong as $\mathbf{FL_e}$ and $\mathbf{P} \leq \mathbf{CFL_{ew}^-}$, there is an exponential lower bound on the length of proofs in $\mathbf{P}$. As a result, there are exponential lower bounds on the length of proofs in sequent calculi $\mathbf{CFL_{e}^-}$, $\mathbf{CFL_{ei}^-}$, and $\mathbf{CFL_{eo}^-}$.
\end{cor}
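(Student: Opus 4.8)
The plan is to feed the hard tautologies $\tilde{\Theta}^*_n$ of Theorem~\ref{ThmLowerBoundCFLe-} through the simulation hypothesis and pull the exponential lower bound back from $\mathbf{CFL_{ew}^-}$ to $\mathbf{P}$. First I would record that $\tilde{\Theta}^*_n$ is provable in $\mathbf{FL_e}$: this is exactly what the opening of the proof of Theorem~\ref{ThmLowerBoundCFLe-} establishes, since $\Theta^*_n$ is provable in $\mathbf{FL}$, hence in $\mathbf{FL_e}$, and in the presence of exchange the divisions $\setminus, /$ collapse to $\to$, turning $\Theta^*_n$ into $\tilde{\Theta}^*_n$. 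Because $\mathbf{P}$ is at least as strong as $\mathbf{FL_e}$, it follows in the sense of Definition~\ref{DfnProofSystemSimulate}$(i)$ that $\mathbf{P} \vdash \tilde{\Theta}^*_n$, so there is indeed something whose proofs we may bound from below.

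Next I would run the simulation. Fix any $\mathbf{P}$-proof $\pi$ of $\tilde{\Theta}^*_n$. Since $\mathbf{P} \leq \mathbf{CFL_{ew}^-}$ with respect to the inclusion translation, Definition~\ref{DfnProofSystemSimulate}$(ii)$ supplies a fixed polynomial $g$ and a $\mathbf{CFL_{ew}^-}$-proof $\pi'$ of $\tilde{\Theta}^*_n$ with $|\pi'| \leq g(|\pi|)$. By Theorem~\ref{ThmLowerBoundCFLe-} every $\mathbf{CFL_{ew}^-}$-proof of $\tilde{\Theta}^*_n$ has at least $2^{\Omega(n^{1/4})}$ proof-lines, so $|\pi'| \geq 2^{\Omega(n^{1/4})}$. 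Combining the two inequalities gives $g(|\pi|) \geq 2^{\Omega(n^{1/4})}$, and since $g$ is a fixed polynomial, inverting it leaves $|\pi| \geq 2^{\Omega(n^{1/4})}$ as well (passing through a fixed root of $g$ only divides the hidden constant in the exponent). As the length $|\tilde{\Theta}^*_n|$ is polynomial in $n$, this is precisely the claimed exponential lower bound in terms of the length of the formula.

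Finally, for the concrete consequences I would check that each of $\mathbf{CFL_e^-}$, $\mathbf{CFL_{ei}^-}$, $\mathbf{CFL_{eo}^-}$ satisfies both hypotheses with $\mathbf{CFL_{ew}^-}$ as the simulating system. The rule set of each is contained in that of $\mathbf{CFL_{ew}^-}$, because $\emptyset, \{i\}, \{o\}$ are all subsets of $\{i,o\}=\{w\}$ and each contains $e$; hence any proof in one of these calculi is verbatim a $\mathbf{CFL_{ew}^-}$-proof, and the simulation holds with $g$ the identity. Moreover $\tilde{\Theta}^*_n$ is already provable in $\mathbf{CFL_e^-}$ by Theorem~\ref{ThmLowerBoundCFLe-}, hence in the larger calculi $\mathbf{CFL_{ei}^-}$ and $\mathbf{CFL_{eo}^-}$, so each proves the hard tautology, which is the only feature of ``at least as strong as $\mathbf{FL_e}$'' that the argument actually uses. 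Applying the general statement then yields the three stated lower bounds. I do not anticipate a genuine obstacle here; the only delicate point is the inversion of the polynomial simulation bound in the second paragraph, where one must confirm that feeding an exponential lower bound into a polynomial upper bound still forces the $\mathbf{P}$-proof length to grow exponentially in $n$.
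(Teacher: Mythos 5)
Your proposal is correct and is exactly the argument the paper intends: its entire proof of this corollary is the single line ``It follows from Theorem \ref{ThmLowerBoundCFLe-}'', and your steps (provability of $\tilde{\Theta}^*_n$ in $\mathbf{P}$ via strength over $\mathbf{FL_e}$, transfer through the polynomial simulation, inversion of the polynomial bound, and the observation that the three cut-free calculi are verbatim subsystems of $\mathbf{CFL_{ew}^-}$) are precisely the unpacking of that citation. Your remark that the ``at least as strong as $\mathbf{FL_e}$'' hypothesis is only used to secure provability of the hard tautologies, which for $\mathbf{CFL_{ei}^-}$ and $\mathbf{CFL_{eo}^-}$ follows directly from Theorem \ref{ThmLowerBoundCFLe-}, is a careful touch the paper leaves implicit.
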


\begin{proof}
It follows from Theorem \ref{ThmLowerBoundCFLe-}.
\end{proof}
In the end, we will provide a result similar to Theorem \ref{ThmSequentCalculusFLToFrege} for the sequent calculus $\mathbf{BPC}$. 

\begin{cor}
We have $\mathbf{BPC} \vdash \, \Rightarrow (\Theta^*_n)^t$ and the number of lines of any proof of this sequent is exponential in $n$.
\end{cor}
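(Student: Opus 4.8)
The provability claim is immediate: by Corollary \ref{CorBPCFL} the formula $(\Theta^*_n)^t$ belongs to $\mathsf{BPC}$, which by definition means that $\Rightarrow (\Theta^*_n)^t$ is provable in the sequent calculus $\mathbf{BPC}$. For the lower bound, the plan is to route a $\mathbf{BPC}$-proof through the Frege system built in Theorem \ref{ThmFregeForBPC} and then invoke the line-count lower bound of Theorem \ref{ThmFLBPC-EF}$(ii)$. Concretely, let $\pi$ be any $\mathbf{BPC}$-proof of $\Rightarrow (\Theta^*_n)^t$, and let $\mathbf{P}$ be the Frege system for $\mathsf{BPC}$ wrt $\mathsf{IPC}$ furnished by Theorem \ref{ThmFregeForBPC}. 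Applying that theorem with $\Upsilon=\emptyset$ and $\Xi=\{(\Theta^*_n)^t\}$ yields a $\mathbf{P}$-proof $\pi'$ of $\top \to (\Theta^*_n)^t$ with $\lambda(\pi')$ equal to the number of sequents in $\pi$, i.e.\ $\lambda(\pi')=\lambda(\pi)$.

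Next I would close the small gap between $\top \to (\Theta^*_n)^t$ and $(\Theta^*_n)^t$. Since $\mathbf{P}$ is strongly complete for $\mathsf{BPC}$ and $\top \in \mathsf{BPC}$, there is a fixed $\mathbf{P}$-proof of $\top$ using a constant number $c_0$ of lines, independent of $n$; appending it to $\pi'$ and applying the modus ponens rule present in $\mathbf{P}$ to $\top$ and $\top \to (\Theta^*_n)^t$ produces a $\mathbf{P}$-proof $\pi''$ of $(\Theta^*_n)^t$ with $\lambda(\pi'') \le \lambda(\pi')+c_0+1 = \lambda(\pi)+O(1)$. Now $\mathbf{P}$ is a $\mathsf{BPC}-\mathbf{F}$ system wrt $\mathsf{IPC}$, and $\mathsf{IPC}$ is a super-intuitionistic logic of infinite branching admitting a Frege system: it is the least such logic, so $\mathsf{IPC}\subseteq \mathsf{BD_2}$ and Theorem \ref{ThmInfiniteBranching} applies, while $\mathbf{HJ}$ is a Frege system for it. Hence Theorem \ref{ThmFLBPC-EF}$(ii)$ with $\mathsf{L}=\mathsf{BPC}$ and $\mathsf{M}=\mathsf{IPC}$ guarantees that $\lambda(\pi'')$ is exponential in $n$ (indeed at least $2^{\Omega(n^{1/4})}$), and therefore $\lambda(\pi) \ge \lambda(\pi'')-O(1)$ is exponential in $n$, as claimed.

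The only genuinely delicate point is ensuring that the line-count lower bound really applies to the specific system $\mathbf{P}$ produced by Theorem \ref{ThmFregeForBPC}, rather than to some canonical representative. This is where the wrt-$\mathsf{IPC}$ formulation matters: unlike the substructural case, Lemma \ref{LemEquivalenceOfEveryFrege} is not available for super-basic Frege systems wrt a stronger parameter, so one cannot speak of ``the'' Frege system for $\mathsf{BPC}$. Fortunately, Theorem \ref{ThmFLBPC-EF}$(ii)$ is stated uniformly for \emph{every} $\mathsf{BPC}-\mathbf{F}$ system wrt $\mathsf{IPC}$, so it covers $\mathbf{P}$ directly, and the constant-overhead bookkeeping between $\top\to(\Theta^*_n)^t$ and $(\Theta^*_n)^t$ is the only other thing to check. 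All remaining verifications---strong completeness of $\mathbf{P}$ so as to prove $\top$, the presence of modus ponens in $\mathbf{P}$, and the identity $\lambda(\pi')=\lambda(\pi)$---are already supplied by the statement of Theorem \ref{ThmFregeForBPC}, so no new combinatorial work is needed beyond assembling these pieces.
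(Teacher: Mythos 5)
Your proposal is correct and follows essentially the same route as the paper's own proof: convert a $\mathbf{BPC}$-proof of $\Rightarrow (\Theta^*_n)^t$ into a $\mathbf{P}$-proof of $\top \to (\Theta^*_n)^t$ via Theorem \ref{ThmFregeForBPC}, apply modus ponens to obtain $(\Theta^*_n)^t$, and invoke Theorem \ref{ThmFLBPC-EF}$(ii)$ with $\mathsf{L}=\mathsf{BPC}$, $\mathsf{M}=\mathsf{IPC}$. Your additional bookkeeping (the constant-line proof of $\top$, the explicit check that $\mathsf{IPC}$ has infinite branching and a Frege system, and the observation that the lower bound applies to the specific system $\mathbf{P}$ because Theorem \ref{ThmFLBPC-EF}$(ii)$ quantifies over all Frege systems wrt $\mathsf{M}$) only makes explicit what the paper leaves implicit.
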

\begin{proof}
Take the Frege system $\mathbf{P}$ for $\mathsf{BPC}$ wrt $\mathsf{IPC}$ introduced in Theorem \ref{ThmFregeForBPC}. Using Theorem \ref{ThmFregeForBPC}, since $\mathbf{BPC} \vdash \, \Rightarrow (\Theta^*_n)^t$, we have $\mathbf{P} \vdash \top \to (\Theta^*_n)^t$. By the modus ponens rule, which is available in $\mathbf{P}$, we get $\mathbf{P} \vdash (\Theta^*_n)^t$. Using the second part of Theorem \ref{ThmFLBPC-EF}, we get the lower bound.
\end{proof}

\textbf{Acknowledgment} My sincere thanks go to Pavel Pudl\'{a}k for his careful reading of this work and his invaluable comments. Many thanks are also due to Emil Je\v{r}\'{a}bek for bringing the main question of the paper to attention and for fruitful discussions. I am also very thankful to Amir Akbar Tabatabai for discussions and useful suggestions. Many thanks also go to Hiroakira Ono, Mohammad Ardeshir and Majid Alizadeh for interesting discussions. I am very grateful to Rosalie Iemhoff and thankful for the hospitality of the Department of Philosophy of Utrecht University where part of this research was done while I was visiting there.


\begin{thebibliography}{99} 
\addcontentsline{toc}{section}{References}

\bibitem{Alon}
Alon, Noga, and Ravi B. Boppana. ``The monotone circuit complexity of Boolean functions." Combinatorica 7.1 (1987): 1-22.

\bibitem{Basic} 
Ardeshir, Mohammad. ``Aspects of basic logic." PhD thes USA (1995).

\bibitem{EBPC} 
Ardeshir, Mohammad, and Bardyaa Hesaam. ``An introduction to basic arithmetic." Logic Journal of the IGPL 16.1 (2008): 1-13.

\bibitem{MWim} 
Ardeshir, Mohammad, and Wim Ruitenburg. ``Basic propositional calculus I." Mathematical Logic Quarterly 44.3 (1998): 317-343.

\bibitem{Frege2}
Beame, Paul, Russell Impagliazzo, Jan Kraj\'{i}\v{c}ek, Toniann Pitassi, Pavel Pudl\'{a}k, and Alan Woods. ``Exponential lower bounds for the pigeonhole principle." Proceedings of the twenty-fourth annual ACM symposium on Theory of computing (1992): 200-220.

\bibitem{Olaf}
Beyersdorff, Olaf, and Oliver Kutz. ``Proof complexity of non-classical logics." Lectures on Logic and Computation. Springer, Berlin, Heidelberg (2011): 1-54.

\bibitem{Celani}
Celani, Sergio, and Ramon Jansana. ``A closer look at some subintuitionistic logics." Notre Dame Journal of Formal Logic 42.4 (2001): 225-255.

\bibitem{Cook}
Cook, Stephen A., and Robert A. Reckhow. ``The relative efficiency of propositional proof systems." The journal of symbolic logic 44.1 (1979): 36-50.

\bibitem{Dosen}
Do\v{s}en, Kosta. ``Modal translations in K and D." Diamonds and defaults. Springer, Dordrecht, (1993): 103-127.

\bibitem{Ono}
Galatos, Nikolaos, Peter Jipsen, Tomasz Kowalski, and Hiroakira Ono. \textit{Residuated lattices: an algebraic glimpse at substructural logics}. Elsevier (2007).

\bibitem{Girard}
Girard, Jean-Yves. ``Linear logic." Theoretical computer science 50.1 (1987): 1-101.

\bibitem{Haken}
Haken, Armin. ``The intractability of resolution." Theoretical Computer Science 39 (1985): 297-308.

\bibitem{Terui}
Hor\v{c}\'{i}k, Rostislav, and Kazushige Terui. ``Disjunction property and complexity of substructural logics." Theoretical Computer Science 412.31 (2011): 3992-4006.

\bibitem{Hrubes}
Hrube\v{s}, Pavel. ``On lengths of proofs in non-classical logics." Annals of Pure and Applied Logic 157.2 (2009): 194-205.

\bibitem{Jerabek}
Je\v{r}\'{a}bek, Emil. ``Substitution Frege and extended Frege proof systems in non-classical logics." Annals of Pure and Applied Logic 159.1-2 (2009): 1-48.

\bibitem{FL}
Kanovich, M. I. ``Horn fragments of non-commutative logics with additives are PSPACE-complete." 1994 Annual Conference of the European Association for Computer Science Logic, Kazimierz, Poland. 1994.

\bibitem{KrajicekProof}
Kraj\'{i}\v{c}ek, Jan. \textit{Proof complexity}. Vol. 170. Cambridge University Press, 2019.

\bibitem{Lambeck}
Lambek, Joachim. ``The mathematics of sentence structure." The American Mathematical Monthly 65.3 (1958): 154-170.

\bibitem{MALL}
Lincoln, Patrick, Lincoln, Patrick, John Mitchell, Andre Scedrov, and Natarajan Shankar. ``Decision problems for propositional linear logic." Annals of pure and Applied Logic 56.1-3 (1992): 239-311.

\bibitem{Ono90}
Ono, Hiroakira. ``Structural rules and a logical hierarchy." Mathematical logic. Springer, Boston, MA, 1990. 95-104.

\bibitem{Cutting2} 
Pudl\'{a}k, Pavel. ``Lower bounds for resolution and cutting plane proofs and monotone computations." The Journal of Symbolic Logic 62.3 (1997): 981-998.

\bibitem{Wim} 
Ruitenburg, Wim. ``Basic predicate calculus." Notre Dame Journal of Formal Logic 39.1 (1998): 18-46.

\bibitem{Rybakov}
Rybakov, Mikhail N. ``Complexity of intuitionistic and Visser's basic and formal logics in finitely many variables." Advances in Modal Logic 6 (2006): 393-411.

\bibitem{Visser} 
Visser, Albert. ``A propositional logic with explicit fixed points." Studia Logica (1981): 155-175.


\end{thebibliography}
\end{document}